\theoremstyle{plain}
\newtheorem{thm}{Theorem}[section]
\newtheorem{lem}[thm]{Lemma}
\newtheorem{cor}[thm]{Corollary}
\theoremstyle{definition}
\theoremstyle{remark}
\newtheorem{rem}[thm]{Remark}
\newcommand{\blue}[1]{{\color[rgb]{0.0,0.0,1.0}{#1}}}
\def\@seccntformat#1{\csname the#1\endcsname.\quad}
\renewcommand{\theenumi}{\roman{enumi}}
\title[A minimizing movements for evolving spirals]{
A minimizing movements approach for 
crystalline eikonal-curvature flows of spirals}
\author{Takeshi Ohtsuka}
\address[T.~Ohtsuka]{Faculty of Informatics, Gunma University,
4-2 Aramaki-cho, Maebashi, Gunma 371-8510, Japan}
\email{tohtsuka@gunma-u.ac.jp}
\author{Yen-Hsi Richard Tsai}
\address[Y.-H.~R.~Tsai]{Department of Mathematics
and Oden Institute for Computational Engineering and Sciences,
The University of Texas at Austin \\
Austin, TX, 78712, USA}
\email{ytsai@math.utexas.edu}
\subjclass[2020]{35K65, 53E10, 65M06, 65K10, 53A04}
\keywords{Crystalline curvature flow, Level set method for spirals, 
Total variation minimizing, Split Bregman method}
\begin{document}

\begin{abstract}    
 We propose an algorithm for evolving spiral curves 
 on a planar domain by normal velocities depending 
 on the so-called crystalline curvatures. 
 The algorithm uses a minimizing movements approach 
 and relies on a special level set method for embedding the spirals. 
 We present numerical simulations and comparisons 
 demonstrating the efficacy of the proposed numerical algorithm.
\end{abstract}

\maketitle

\section{Introduction}

In this paper, we propose a numerical method
for the evolution of spirals 
in a bounded domain $\Omega \subset \mathbb{R}^2$
by the crystalline curvature flow.
We denote the spirals at time $t$ by $\Sigma(t),$ and use 
${\bf n}$ for a continuous unit normal vector field
on $\Sigma(t)$ defining the orientation of the evolution. 
In this normal direction the speed of the spirals is formally defined by
\begin{equation}
 \label{geo mcf}
 V_\gamma = - \kappa_\gamma + f,
\end{equation}
where $V_\gamma$ and $\kappa_\gamma$ respectively denote 
the normal velocity and the crystalline curvature of $\Sigma(t)$ and $f$ is the given driving force.
%
Each spiral in $\Sigma(t)$ is attached to a stationary center
denoted by $a_1, \ldots, a_N \in \Omega$, 
and in the evolution of $\Sigma(t)$, portions of different spirals may merge.
To describe the merger of spirals during the evolution, we formulate $\Sigma (t)$ 
by the level set method developed in \cite{Ohtsuka:2003wi, OTG:2015JSC}.
In this formulation, $\Sigma (t)$ and ${\bf n}$ are given by
\begin{equation}
 \label{intro: lv-form}
 \Sigma (t) = \{ x \in \overline{W} ; \ u(t,x) - \theta (x) \equiv 0 \mod 2 \pi \mathbb{Z} \},
 \quad 
 {\bf n} = - \frac{\nabla (u - \theta)}{|\nabla (u - \theta)|}, 
\end{equation}
with an auxiliary function {$u(t,x)$} and the pre-determined function
\[
 \theta (x) = \sum_{j=1}^N m_j \arg (x-a_j),
\]
where $\arg (x)$ denotes 
the argument of $x \in \mathbb{R}^2$ and  $m_j \in \mathbb{Z} \setminus \{ 0 \}$,
$W = \Omega \setminus (\bigcup_{j=1}^N \overline{B_r (a_j)})$,
and $B_r (a)$ denotes an open disc with center $a \in \mathbb{R}^2$ and
a radius $r > 0$.
The domain $W$ is chosen so that the singularity of $\arg (x-a_j)$ is removed
from $\Omega$, and thus $\Sigma(t)$ is well-defined in \eqref{intro: lv-form},
even though $\theta$ is a multiple valued function.

Crystalline mean curvature flow is an anisotropic mean curvature flow
with a singular and nonlocal surface energy density.
We refer to the book \cite{Giga:2006} for a level set formulation
of geometric evolution equation.
A level set formulation of \eqref{intro: lv-form}
yields that
\begin{equation*}
 V_\gamma = \frac{u_t}{\gamma (\nabla (u - \theta))}, \quad
 \kappa_\gamma = - \mathrm{div} [\xi (\nabla (u-\theta))], 
\end{equation*}
where 
$\gamma = \gamma (p) \in C^2 (\mathbb{R}^2 \setminus \{ 0 \})$ 
and $\xi = (\partial_{p_1} \gamma, \partial_{p_2} \gamma)$
for $p=(p_1, p_2)$
denotes an anisotropic surface energy density
and 
the Cahn-Hoffman vector, respectively.
Hence, \eqref{geo mcf} will be represented as
\begin{equation}
 \label{intro: lv-mcf}
 u_t - \gamma (\nabla (u - \theta)) 
 \left\{ \mathrm{div} [\xi (\nabla (u-\theta))] + f \right\}
 = 0
 \quad \mbox{in} \ (0,T) \times W. 
\end{equation}
The Wulff diagram associated with $\gamma$ is defined as
\begin{align*}
 \mathcal{W}_\gamma = \{ p \in \mathbb{R}^2 ; \ \gamma^\circ (p) \le 1 \}, 
 \quad \mbox{where} \ \gamma^\circ (p) = \sup \{ p \cdot q ; \ \gamma (q) \le 1 \}.
\end{align*}
An important feature about the crystalline curvature $\kappa_\gamma$ 
is that $\kappa_\gamma \equiv 1$ on
the boundary $\partial \mathcal{W}_\gamma$ which is a convex polygon
(see \cite{Bellettini-Paolini:1996HMJ} for details).
It is therefore natural to identify 
a piecewise linear $\gamma^\circ$ by $\tilde{n}_j \in \mathbb{R}^2 \setminus \{ 0 \}$
for $j=0, \ldots, N_\gamma - 1$ , using the formula 
\[
 \gamma^\circ (p) = \max_{0 \le j \le N_\gamma - 1} \tilde{n}_j \cdot p.
\]
Consequently, 
$\gamma$ is also a piecewise linear, i.e., 
\begin{equation}
 \label{intro: piecewise linear gamma}
 \gamma (p) = \max_{0 \le j \le N_\gamma - 1} n_j \cdot p 
\end{equation}
for some $n_j \in \mathbb{R}^2 \setminus \{ 0 \}$ for $j=0, \ldots, N_\gamma - 1$.
See \cite{Rockafellar:1997}.
We point out that it is not immediately clear 
how to make sense of \eqref{intro: lv-mcf} classically, 
as it involves formally taking {the first and second} derivatives of $\gamma$.
In this paper, 
we 
propose a new numerical algorithm
to compute the solution of \eqref{intro: lv-mcf}, 
particularly with $\gamma$ of the form \eqref{intro: piecewise linear gamma},
in the sense of a weak formulation.

The variational approach by \cite{AlmgrenTaylorWang:1993SICON, Chambolle:2004}
is a powerful option to formulate our problem.
Almgren, Taylor and Wang \cite{AlmgrenTaylorWang:1993SICON}
launched an algorithm for the isotropic mean curvature flow 
by regarding it as a minimizing problem of the functional 
which consists of its perimeter and deformation.
This idea is also extended to the problem with driving force by \cite{LH:1995CVPDE},
or crystalline case by \cite{AT:1995JDG}.
Chambolle \cite{Chambolle:2004} proposed an algorithm
conbining the above idea and a level set method
using signed distance function from an interface.
This algorithm includes the minimization of the total variation of $u$, 
interpreted as minimizing the perimeter of each of $u$'s level sets,
and the $L^2$ norm, which measures the deformation from one discrete step to the next.
Therefore, one may consider a splitting algorithm for the minimization problem, 
using more efficient algorithms for the total variation 
and $L^2$ minimization separately and iteratively.
Oberman, Osher, Takei and the second author \cite{OOTT:2011CMS}
proposed an algorithm with split Bregman method \cite{GoldsteinOsher:2009SIIMS} for interface
motion by mean curvature, and extend it to the crystalline case.
The goal of this paper is to extend the algorithm by \cite{OOTT:2011CMS}
to the evolution of spirals.

We now quickly review the formulation in \cite{Chambolle:2004}. 
Let $\Omega \subset \mathbb{R}^2$ be {a} 
domain
and $\Sigma \subset \Omega$ be a given interfacial curve, 
separating $\Omega$ into two disjoint subsets.
Let 
$d_\Sigma \colon \Omega \to \mathbb{R}$ be a signed distance function to $\Sigma$
such that $d_\Sigma$ is positive inside of $\Sigma$.
The minimizer $w^*$ of the functional
\[
 E (w) = \int_\Omega \gamma (\nabla w) dx
 + \frac{1}{2h} \| w - d_\Sigma \|_{L^2}^2.
\]
formally satisfies 
\[
 - \mathrm{div} [ \xi (\nabla w^*) ]
 + \frac{w^* - d_\Sigma}{h} = 0.
\]
Note that $- \mathrm{div}[\xi (\nabla w^*)]$ denotes 
an anisotropic curvature of level sets of $w^*$
with an anisotropic energy density $\gamma$
(see \cite{Giga:2006}).
Moreover, $\Sigma$ moves to the region
of $\{x \in \Omega; \ d_\Sigma (x) < 0 \}$ when $V > 0$.
Hence, one can find that
\[
 \Sigma_h := \{ x \in \Omega ; \ w^* (x) = 0 \}
 = \{ x \in \Omega ; \ 
 d_\Sigma(x) = - h\mathrm{div} [ \xi (\nabla w^* (x)) ] \}.
\]
is the result of the evolution of $\Sigma$
by $V = -\kappa_\gamma$ in a short time interval $[0,h]$.

When we apply the above idea to our problem,
it is natural to consider the minimization problem of the form
\begin{equation}
 \label{spiral ROF: primitive ver}
 w \mapsto \int_W \gamma (\nabla (w - \theta)) dx
 - \int_W fw dx + \frac{\| w - g \|_{L^2}^2}{2h}, 
\end{equation}
where $g$ is a function relating to the signed ``difference''
from the current state of spiral $\Sigma (t) \subset \overline{W}$.
However, one cannot take $g$ as the ``signed distance
function'' because $\Sigma(t)$ is not an interfacial curve.
Indeed, a spiral curve is an open curve in general so that
it does not divide the domain into two regions.
For this problem, the level set method by \cite{Ohtsuka:2003wi, OTG:2015JSC}
overcomes this issue by considering \eqref{intro: lv-form}
instead of the level set of $u$.
Therefore, one option is to construct $g \in C(\overline{W})$ 
so that $g - \theta \equiv {M} d_\Sigma$ in 
a neighborhood of $\Sigma (t)$, where $M \ge 1$ 
is a constant which is chosen so that $g$ is continuous on 
$\overline{W}$. 
(Note that there is a preliminary report \cite{Ohtsuka:OWR2017} 
in which the first author mentions this algorithm.) 
Unfortunately, however, 
it may be necessary to choose an extremely large $M$, 
in particular when there is a center having multiple spirals, and the resulting algorithm does not seems particularly robust.
To overcome these difficulties, we choose a general level set function, 
i.e., $u_n = u(nh, x)$ to replace $g$ instead of signed distance function.
Then, we consider the minimizing problem of the form
\begin{equation}
 \label{spiral ROF: revised ver}
 w \mapsto \int_W \gamma (\nabla (w - \theta)) dx
 - \int_W fw dx + \frac{1}{2h} 
 \left\| \frac{ w - u_n}{\sqrt{\gamma (\nabla(u_n - \theta))}} \right\|_{L^2}^2 
\end{equation}
instead of \eqref{spiral ROF: primitive ver}.
If $w^*$ is the minimizer of the above, 
then it formally yields the finite difference approximation 
of \eqref{intro: lv-mcf}. 
Thus, we set $u_{n+1} = w^*$
and continue to solve the above with replacing $u_n$ to $u_{n+1}$.

Our idea leads to two crucial improvements:
our algorithm does not require computing signed distance to a curve,
and we may choose $m_j \in \mathbb{Z} \setminus \{ 0 \}$
so that $|m_j|$ is large.
In fact, if $|m_j| \gg 1$, then the spiral curves are very crowded
around the center $a_j$.
This situation is too difficult to compute by the algorithm
using the signed distance without additional adaptivity in meshing. 
In particular, the second improvement enables us to treat the bunching phenomena.
In experimental crystal growth, interlace patterns or so-called
illusory spirals and loops are reported in \cite{Verma:PhylosMag1951, Shtukenberg:2013}.
Our formulation can be applied to such situations.

We briefly remark on a special case 
when $u_n$ in \eqref{spiral ROF: revised ver}
has a ``flat'' portion, i.e., 
the set $\{ x \in \overline{W} ; \ 
\nabla (u_n (x) - \theta (x)) = 0 \}$
has an interior, which causes not only the issue of the zero division 
in \eqref{spiral ROF: revised ver},
but also the nonuniqueness of the evolution. 
Such a case is referred to as ``fattening.''
Evans and Spruck \cite{Evans-Spruck:1991}
pointed out the possibility of the development of an interior
in the level set equation for mean curvature flows.
In the set theoretic approach,
Soner \cite{Soner:1993JDE} 
gave an example in which fattening occurs.
The existing consistency and convergence results for mean curvature flows, such as threshold dynamics by Merriman, Bence and Osher \cite{MBO:1992}, 
or the singular limit of Allen--Cahn equation \cite{XChen:1992, ESS:1992, BSS:1993}, are established under the smooth evolution or the case when fattening never occur; see e.g. \cite{BG:1995SINN} for MBO algorithm or \cite{ESS:1992, BSS:1993}
for the singular limit of Allen--Cahn equation.
Indeed, these methods may not admit the fattening
since they consist of regularization and re-initialization in every iteration.
On the other hand, the solution computed by our approach may admit
the development of a flat region, which remains stationary until some other portions of the spiral curve approach it.
In \eqref{spiral ROF: revised ver}, we introduce a cut-off 
$\max \{ \gamma (\nabla (u - \theta)), \alpha \}$ with a small constant $\alpha > 0$
to avoid the zero division.
This means that our approach approximates the solution to a
level set equation of \eqref{geo mcf} with an approximation regularizing 
the normal velocity of level sets.

As a pioneering work on the subject of crystalline motion,
there is a front-tracking approach due to \cite{AngenentGurtin:1989ARMA, Taylor:1991}.
This idea has been also extended to the evolution of a single spiral
by \cite{Ishiwata:2014, IO:DCDS-B}.
This approach is very convenient to 
compute crystalline curvature flows
of a single spiral in a class of admissible curves. 
Therefore, we compare our approach and the model due to \cite{IO:DCDS-B}
as a benchmark test.
The value of our approach is in its flexibility to be applied in situations where
spirals may merge (and thus breaking the admissibility condition for the algorithm of \cite{IO:DCDS-B}).
In summary, in this paper we deal with spiral curves 
for which signed distance functions cannot be defined. 
In Section~2, we propose a new minimizing movements formulation, 
generalizing Chambolle's formulation to crystalline eikonal-curvature flows 
of spiral curves. 
We also present analytical results 
that provide a theoretical foundation for the proposed algorithm.
In Section~3,
we offer an efficient numerical algorithm 
for the proposed minimizing movements formulation. 
In Section~4, we provide a few numerical simulations and comparisons. 
Also, in Section~4, we generalize the algorithm 
to simulate motion laws involving multiple anisotropies.
We conclude in Section~5.




\section{The Proposed Formulation}
\label{proposed algorithms}

In this section, we propose a minimizing movements formulation 
for the evolution of spirals by a normal velocity of the form defined in \eqref{geo mcf}.
The spirals are embedded by the level set method developed in \cite{Ohtsuka:2003wi, OTG:2015JSC}. 


\subsection{The level set method for evolving spirals}
\label{level set method}

We first review the level set method for evolving spirals
due to \cite{Ohtsuka:2003wi, OTG:2015JSC} and its evolution equation
for an anisotropic eikonal-curvature flow.

Let $\Omega \subset \mathbb{R}^2$ be a bounded domain
with smooth boundary.
We denote the centers of spirals by
$a_1, a_2, \ldots, a_N \in \Omega$.
We remove small discs $B_r (a_j) = \{ x \in \mathbb{R}^2 ; \ |x - a_j| < r \}$
for $j=1, \ldots, N$ from $\Omega$, and 
set the domain for spiral growth to be
\begin{equation}
 \label{def:W}
  W = \Omega \setminus \bigcup_{j=1}^N \overline{B_r (a_j)}.
\end{equation}
We choose $r > 0$ satisfying
$\overline{B_r (a_i)} \cap \overline{B_r (a_j)} = \emptyset$
if $i \neq j$,
and $\overline{B_r (a_j)} \subset \Omega$
so that $\partial W$ is smooth. 
Let $m_j \in \mathbb{Z}$ be a signed number of
spirals associated with $a_j$, meaning
\begin{itemize}
 \item $|m_j|$ curves are attached to $a_j$
       as their endpoint,
 \item if $m_j > 0$ (resp. $m_j < 0$), then
       these curves rotate around $a_j$ with
       counterclockwise (resp. clockwise)
       rotation when $V_\gamma > 0$.
\end{itemize}
Our level set representation of spirals relies on 
a pre-determined function $\theta$ that is first introduced 
for a phase-field model of evolving spirals 
\cite{Kobayashi:1990th, Miura:2015CGD}.
Define
\begin{equation}\label{def:theta-mj-aj}
     \theta (x) = \sum_{j=1}^N m_j \arg (x-a_j).
\end{equation}
Let $\Sigma (t) \subset \overline{W}$ be
the evolving spirals at time $t \ge 0$,
and ${\bf n} \in \mathbb{S}^1$ be a
continuous unit normal vector field of $\Sigma (t)$
denoting the orientation of the evolution of $\Sigma (t)$.
Then, we describe $\Sigma (t)$
and ${\bf n}$ by
\begin{equation}
 \label{lv form}
  \Sigma (t) = \{ x \in \overline{W} ; \ 
  u(t,x) - \theta (x) \equiv 0 \mod 2 \pi \mathbb{Z} \},
  \quad {\bf n} = - \frac{\nabla (u - \theta)}{|\nabla (u - \theta)|}
\end{equation}
with an auxiliary function $u \colon [0,T) \times {\overline{W}} \to \mathbb{R}$
for some $T > 0$.
Note that $\theta$ should be a multi-valued function
to describe the spirals.
However, $\nabla \theta$ can be defined as a single-valued function.

Let $\gamma \colon \mathbb{R}^2 \to [0,\infty)$ be convex,
continuous, positively homogeneous of degree 1, 
and positive on $\mathbb{S}^1$.
It defines an anisotropic surface energy density.
According to \cite{Giga:2006}, 
an anisotropic curvature $\kappa_\gamma$ of $\Sigma (t)$
with \eqref{lv form} is given of the form
\begin{equation}
 \label{lv-anisotropic curvature}
 \kappa_\gamma = - \mathrm{div} [\xi (\nabla (u - \theta))]
\end{equation}
provided that $\gamma \in C^2 (\mathbb{R}^2 \setminus \{ 0 \})$,
where $\xi (p) = (\partial_{p_1} \gamma, \partial_{p_2} \gamma)$ for
$p = (p_1, p_2)$.
In fact, $\gamma (p) = |p|$ reduces the above 
to the isotropic 
curvature.
Note that \eqref{lv-anisotropic curvature} can be interpreted
as the first variation of the anisotropic energy
\[
 \int_W \gamma (\nabla (u - \theta)) dx
\]
for the  level sets of $u - \theta$.
This view point will be convenient to understand
the algorithms of \cite{AlmgrenTaylorWang:1993SICON, Chambolle:2004}.

The support function of {$\{ p \in \mathbb{R}^2 ; \ \gamma (p) \le 1 \}$}, denoted by
\[
 \gamma^\circ (p) = \sup \{ p \cdot q ; \ \gamma(q) \le 1 \},
\]
plays important roles for the anisotropic curvature flow. %
In fact, the followings hold:
\begin{itemize}
 \item If $\gamma, \gamma^\circ \in C^2 (\mathbb{R}^2 \setminus \{ 0 \})$, 
       then for $p \in \mathbb{R}^2 \setminus \{ 0 \}$ we have
  \begin{align}
	& \gamma (\nabla \gamma^\circ (p)) = \gamma^\circ (\nabla \gamma (p)) = 1, 
	\label{1st conj} \\
	& \nabla \gamma ( \nabla \gamma^\circ (p) ) = \frac{p}{\gamma^\circ (p)},\label{2nd conj}\\
	&\nabla \gamma^\circ (\nabla \gamma (p)) = \frac{p}{\gamma (p)}.
	\label{3rd conj}
       \end{align}.
 \item $\gamma^{\circ \circ} (p) = \gamma (p)$.
\end{itemize}
See \cite{Bellettini-Paolini:1996HMJ} 
and \cite{Rockafellar:1997} 
for details.

Equation \eqref{2nd conj} yields that
\[
 \kappa_\gamma = 1 \quad \mbox{on} \ \partial \mathcal{W}_\gamma, \ 
 \mbox{where} \ \mathcal{W}_\gamma = \{ p \in \mathbb{R}^2 ; \ 
 \gamma^\circ (p) \le 1 \}.
\]
%
In other words, by the analogy from the isotropic curvature,
the Wulff diagram plays the role of a unit ball 
in the anisotropic curvature flow.
Thus, we introduce the
Finsler metric
\begin{equation}
 \label{metric by gamma}
 d_{{\gamma}} 
 (x,y) = \gamma^\circ (x-y). 
\end{equation}
(Note that this metric is possibly not symmetric
since we do not assume $\gamma$ is symmetric.)
Then, \eqref{1st conj} implies 
$\gamma (\nabla d_{\gamma} (\cdot, y)) = 1$ in $\{ x; \ d_\gamma (x,y) > 0\}$.
This is a generalized result of $|\nabla d| = 1$
for isotropic distance $d(x,y) = |x-y|$.
Following this fact,
we introduce an anisotropic normal velocity $V_\gamma$
for $\Sigma (t)$, which is of the form
\begin{equation}
 \label{lv-anisotropic velocity}
  V_\gamma = \frac{u_t}{\gamma (\nabla  (u - \theta))}.
\end{equation}
This is analogous to the isotropic normal velocity
$V = u_t / |\nabla u|$ for the level set $\{ x ; \ u(t,x) = 0 \}$.
Consequently, we obtain the level set equation
of \eqref{geo mcf} as follows:
\begin{equation}
 \label{lv mcf}
 u_t - \gamma (\nabla (u - \theta))
 \left\{ \mathrm{div} [{\xi} (\nabla (u - \theta))] + f \right\}
 = 0 \quad 
 \mbox{in} \ (0,T) \times W.
\end{equation}

This paper primarily concerns the crystalline curvature flow,
i.e., the cases in which $\mathcal{W}_\gamma$ is a 
convex polygon with $N_\gamma$ sides ($N_\gamma\ge 3$).
For these cases, it is natural to write $\gamma^\circ$ as
\[
 \gamma^\circ (p) = \max_{0 \le j \le N_\gamma - 1} \tilde{n}_j \cdot p 
 \quad \mbox{with} \ 
 \tilde{n}_j = \tilde{r}_j (\cos \tilde\vartheta_j, \sin \tilde\vartheta_j)
 \in \mathbb{R}^2 \setminus \{ 0 \}.
\]
Here we assume that $\{ \tilde{n}_j ; \ j = 0, \ldots, N_\gamma - 1 \}$
satisfies the followings
so that $\mathcal{W}_\gamma$ is a convex polygon: 
\begin{enumerate}
 \renewcommand{\theenumi}{W\arabic{enumi}}
 \item \label{polygonal angle}
       $\tilde\vartheta_j \in [\tilde\vartheta_0, \tilde\vartheta_0 + 2 \pi]$ for 
       $j = 0,1,2, \ldots, N_\gamma - 1$,
 \item \label{convexity angle} 
       $\tilde\vartheta_j < \tilde\vartheta_{j+1} < \tilde\vartheta_j + \pi$
       for $j = 0, 1, \ldots, N_\gamma - 1$,
       where {$\tilde\vartheta_{N_\gamma} = \tilde\vartheta_0 + 2 \pi$},
 \item \label{minimal condi} 
       The set $\{ \tilde{n}_j ; \ j = 0, 1, 2, \ldots, N_\gamma - 1 \}$
       is minimal, i.e.,
       \[
	\gamma^\circ (p) \neq \max \{\tilde{n}_j \cdot p ; \ j \neq k \}
       \]
       for any $k \in \{ 0, \ldots, N_\gamma - 1 \}$.
\end{enumerate}
By $\gamma^{\circ \circ} = \gamma$ and conditions 
\eqref{polygonal angle}--\eqref{minimal condi}, 
there exist $\vartheta_j \in \mathbb{R}$ for $j=0, \ldots, N_\gamma - 1$
satisfying
\begin{equation}
 \label{def:gamma for polygons}
 \gamma (p) = \max_{0 \le j \le N_\gamma - 1} n_j \cdot p 
 \quad \mbox{with} \ 
 n_j = r_j (\cos \vartheta_j, \sin \vartheta_j) \in \mathbb{R}^2 \setminus \{ 0 \}. 
\end{equation}
Moreover,
$\{ n_j ; \ j=0, \ldots, N_\gamma - 1 \}$ still satisfies 
\eqref{polygonal angle}--\eqref{minimal condi}.
The typical example of $\gamma$ and $\gamma^\circ$ is
\begin{align*}
 & \gamma (p) = \| p \|_{\ell^1}, \quad 
 \gamma^\circ (p) = \| p \|_{\ell^\infty},
\end{align*}
and thus $\mathcal{W}_\gamma = [-1,1]^2$.
It is given by
\begin{align*}
 n_j = \sqrt{2} \left( \cos \frac{\pi (2j+1)}{4}, 
 \sin \frac{\pi(2j+1) }{4} \right), \quad
 \tilde{n}_j = \left( \cos \frac{\pi j}{2}, \sin \frac{\pi j}{2} \right).
\end{align*}
In general, the followings are required to $\gamma$ 
for the crystalline curvature flow:
\begin{enumerate}
 \renewcommand{\theenumi}{A\arabic{enumi}}
 \item \label{gamma: convexity}
       $\gamma \colon \mathbb{R}^2 \to [0,\infty)$
       is convex,
 \item \label{gamma: homogeneity}
       $\gamma$ is positively homogeneous of degree 1,
 \item \label{gamma: positivity}
       There exists $\Lambda_\gamma > 0$ such that
       $\Lambda_\gamma^{-1} \le \gamma \le \Lambda_\gamma$ on $\mathbb{S}^1$, 
 \item \label{gamma: singularity}
       $\mathcal{W}_\gamma = \{ p ; \ \gamma^\circ (p) \le 1 \}$
       is a convex polygon.
\end{enumerate}
Note that any $\gamma$ given by \eqref{def:gamma for polygons}
with \eqref{polygonal angle}--\eqref{minimal condi}
satisfies \eqref{gamma: convexity}--\eqref{gamma: singularity}.

\subsection{The proposed minimizing movements}
\label{sec: w/o dist}






Now, let the spiral $\Sigma \subset \overline{W}$ 
be given as
\[
 \Sigma 
 = \{ x \in \overline{W} ; \ u(x) - \theta (x) \equiv 0 \mod 2 \pi \mathbb{Z} \}
\]
with $u \in C(\overline{W})$.
Corresponding to the level set equation defined in \eqref{lv mcf}, 
we consider the minimizing the energy functional
\begin{equation}
 \label{energy: w/o dist}
  w \mapsto 
  \int_W \gamma (\nabla (w - \theta)) dx
  - \int_W fw dx + \frac{1}{2h}
  \left\| \frac{w - u}{\sqrt{\gamma (\nabla (u - \theta))}}
	    \right\|_{L^2}^2.
  %
  %
\end{equation}
%
Formally,
the minimizer $w^*$ 
satisfies
\[
 - \mathrm{div} [{\xi} (\nabla (w^* - \theta))]
 - f
 + \frac{w^* - u}{h \gamma (\nabla (u - \theta))} 
 %
 %
 = 0,
\] 
which implies
\begin{equation}
 \label{1st variation: lv form}
  w^* = u + h \gamma (\nabla(u - \theta))
  %
  %
  \left\{
   \mathrm{div} [{\xi} (\nabla (w^* - \theta))] + f
  \right\}.
\end{equation}
On the other hand, the implicit Euler scheme
of \eqref{lv mcf} 
takes the form
\begin{equation}\label{finite difference on time}
    \begin{aligned}
        & u(t+h) = u(t) + h\gamma (\nabla (u(t+h) - \theta))
 \left\{ \mathrm{div} [{\xi} (\nabla (u(t+h) - \theta)) ] + f \right\}.
 %
 %
    \end{aligned}
\end{equation}
Comparing \eqref{1st variation: lv form} and \eqref{finite difference on time},
we define 
\[
 S_h (\Sigma) =
 \{ x \in \overline{W} ; \ w^* (x) - \theta (x) \equiv 0 \mod 2 \pi \mathbb{Z} \}
\]
as the result of the evolution of $\Sigma$ for 
a short time step $h > 0$.
We can now iterate the stepping as follows:
\begin{enumerate}
 \item For given $\Sigma_n$ ($n \ge 0$)
       and $u_n \in C (\overline{W})$ satisfying
       \begin{equation*}
	\label{lv form n}
	 \Sigma_n = \{x \in \overline{W} ; \
	 u_n (x) - \theta (x) \equiv 0 \mod 2 \pi \mathbb{Z} \},	   
       \end{equation*}
       compute the minimizer {$w^*$} of 
       {\eqref{energy: w/o dist} with $u=u_n$}.
 \item Set {$u_{n+1} = w^*$ and}
       $$\Sigma_{n+1} = \{ x \in \overline{W} ; \ 
       u_{n+1} (x) - \theta (x) \equiv 0 \mod 2 \pi \mathbb{Z} \}.$$
\end{enumerate}
This algorithm can be applied even if $\gamma$ is not smooth, 
and we can then define
$\Sigma (t) = \Sigma_{n}$ when $nh \le t < (n+1)h $.

\noindent
\begin{rem}
 Formally,
 if $u - \theta$ 
 is a signed distance by \eqref{metric by gamma},
 then  
 \eqref{energy: w/o dist} is reduced to \eqref{spiral ROF: primitive ver}
 by \eqref{1st conj}.
 In other words, the first variation of 
 the term of $\| w - d_\Sigma \|_{L^2}^2$ in Chambolle's algorithm
 or $\| (w-u) / \sqrt{\gamma (\nabla (u - \theta))} \|_{L^2}^2$ 
 in \eqref{energy: w/o dist} approximates the normal velocity 
 of the evolving level set,
 where $d_\Sigma$ denotes the distance function from
 $\Sigma$.

 In other viewpoint, for Chambolle's algorithm,
 one has to choose the function $u$ 
 satisfying $\gamma (\nabla (u - \theta)) = 1$ at least in a tubular neighborhood
 of $\Sigma$. 
 This is the reason why Chambolle's algorithm requires the process
 constructing the distance function.
 \if0{
 The major difference between \cite{Chambolle:2004} and 
 our approach is that we just set setting $u_{n+1} = w^*$
 on the above instead of considering $\{ x \in W ; \ w^* (x) = 0 \}$.
 This is yielded 
 by modifying the difference term $w - g$ 
 of \eqref{spiral ROF: primitive ver} 
 into the last term of \eqref{energy: w/o dist}.
 Its 1st variation yields a direct approximation of \eqref{lv mcf}.}\fi
\end{rem}


\if0{
\par
\bigskip
\noindent
[TO: Note that I have not tried (I) yet.
I'll try it later.
What I worry about (I) is that
it may have a similar problem to the previous algorithm
using $w^* = \arg \min_{w} E(w;u_n)$ by \eqref{energy: with dist}
and set
$u_{n+1} = u_n + \gamma(\nabla (u_n - \theta)) (w^* - u_n)$.]
}\fi

\noindent
\begin{rem}
 The proposed algorithm 
 is not justified when 
 $\{ x \in \overline{W} ; \ \nabla (u_n - \theta) (x) = 0 \}$
 has a non-empty interior. 
 This means that the {integrand of the functional \eqref{energy: w/o dist}} 
 will have formally a division by 0.
 Unfortunately, when we consider the evolution of
 interfacial curve (i.e., $\theta \equiv 0$),
 there exists an example
 such that a level set $\{ x \in \Omega; \ u(t,x) = c \}$
 has a non-empty interior for $t > 0$
 even though $\{ x \in \Omega; \ u(0,x) = c \}$
 does not have an interior.
 Let us consider the following isotropic eikonal-curvature motion:
 \begin{align*}
  & u_t - |\nabla u| \left\{ \mathrm{div} \left( \frac{\nabla u}{|\nabla u|} \right) + 1 \right\}
  = 0 \quad \mbox{in} \ (0,T) \times \Omega, \\
  & u(0,x) = -|x_2| \quad \mbox{for} \ x=(x_1,x_2) \in \Omega
 \end{align*}
 with $\Omega = (\mathbb{R} / (2 \mathbb{Z}))^2$.
 Then, one can find that
 \[
  u(t,x) = \min\{ 0, t - |x_2| \}
 \]
 is a viscosity solution to the above problem.
 However, $\{ x \in \Omega ; \ u(t,x) = 0 \}$ has non-empty interior
 for every $t > 0$.

 In the practical stage, when fattening develops,
 we will introduce a cut-off of $\gamma (\nabla (u - \theta))$
 to avoid division by zero;
 see \S \ref{sec: existence of minimizing movement}, \ref{algorithm}.
\end{rem}

\begin{rem}\label{different metric: w/o dist}
 When we consider \eqref{geo mcf} with a more general mobility term of the form
 \[
  \beta (- {\bf n}) V_\gamma = - \kappa_\gamma + f,~~~\beta \neq 0,
 \] 
 one can set $u_{n+1}$ as
 \begin{equation*}
  u_{n+1} = u_n + \frac{w^* - u_n}{\beta (\nabla (w^* - \theta))},
 \end{equation*}
 where $w^*$ is the minimizer of \eqref{energy: w/o dist}.
\end{rem} 

 \if0{
 Regularization for the term of
 $(w-u)/\gamma (\nabla (u - \theta))$
 in \eqref{energy: w/o dist} is one of options to avoid
 the dividing by zero.
 See also \S \ref{algorithm}.
 }\fi

\subsection{Existence of minimizing movements}
\label{sec: existence of minimizing movement}

In this subsection, we prove the existence of
the sequence $u_n$ satisfying our proposed algorithms.
We shall address two essential questions on (i) the existence of a minimizer
of 
\eqref{energy: w/o dist} for $u_n$, 
and (ii) whether the obtained minimizer $u_{n+1}$ has $\nabla u_{n+1}$ again.

We first give a rigorous definition of the functional
$w \mapsto \int_W \gamma (\nabla (w - \theta)) dx$
to find the minimizer of {$E$}. 
By analogy of the total variation
\begin{equation*}
  [u]_{BV} 
  :=
  \sup \left\{ - \int_W u~\mathrm{div} \varphi dx ; \
	\varphi \in C^1_c (W; \mathbb{R}^2), \ |\varphi| \le 1 \right\}
\end{equation*}
and
$\gamma (p) = (\gamma^\circ)^\circ (p) = \sup \{ p \cdot q ; \ \gamma^\circ (q) \le 1 \}$
when $\gamma$ is convex,
let us define $J_\gamma \colon L^1 (W) \to [0, \infty]$ by
\begin{align}
 \label{aniso total variation}
 J_\gamma (w) = \sup \left\{
 - \int_W w~\mathrm{div} \varphi dx - \int_W \nabla \theta \cdot \varphi dx ; \
 \varphi \in C^1_c (W; \mathbb{R}^2), \ \gamma^\circ (\varphi) \le 1
 \right\}.
\end{align}
We obtain the following fundamental properties
by standard argument.

\begin{lem}
 \label{lem: aniso total variation}
 Assume that \eqref{gamma: convexity}--{\eqref{gamma: positivity}} hold.
 Then, the following hold.
 \begin{enumerate}
  \item \label{J: convex and positive}
	$J_\gamma$ is convex, and $J_\gamma \ge 0$.
  \item \label{J: well-defined}
	If $w \in BV(W)$, then $J_\gamma (w) < \infty$.
  \item \label{J: lower semiconti}
	$\liminf_{u \to v} J_\gamma (u) \ge J_\gamma (v)$ in $L^1 (W)$.
  \item \label{J: aniso peri}
	{If $w \in W^{1,1} (W)$, then
	$J_\gamma (w) = \int_W \gamma (\nabla (w - \theta)) dx$.}
 \end{enumerate}
\end{lem}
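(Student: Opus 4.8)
\emph{Proof plan.} Write $\mathcal F:=\{\varphi\in C^1_c(W;\mathbb{R}^2);\ \gamma^\circ(\varphi)\le1\}$ and, for $\varphi\in\mathcal F$, set $\ell_\varphi(w):=-\int_W w\,\mathrm{div}\,\varphi\,dx-\int_W\nabla\theta\cdot\varphi\,dx$, so that $J_\gamma=\sup_{\varphi\in\mathcal F}\ell_\varphi$ by \eqref{aniso total variation}. Assertions (i) and (iii) are then formal: each $\ell_\varphi$ is affine in $w$, so $J_\gamma$ is convex; $0\in\mathcal F$ with $\ell_0\equiv0$, so $J_\gamma\ge0$; and $\mathrm{div}\,\varphi\in C_c(W)$ is bounded, so $u\mapsto\ell_\varphi(u)$ is $L^1(W)$-continuous, whence $J_\gamma$, being a supremum of $L^1$-continuous functionals, is $L^1$-lower semicontinuous, which is (iii).

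For (ii) I would first deduce from homogeneity and \eqref{gamma: positivity} that $\Lambda_\gamma^{-1}|p|\le\gamma(p)\le\Lambda_\gamma|p|$ and, dualizing, $\Lambda_\gamma^{-1}|q|\le\gamma^\circ(q)\le\Lambda_\gamma|q|$ on $\mathbb{R}^2$; in particular $\varphi\in\mathcal F$ forces $|\varphi|\le\Lambda_\gamma$ pointwise. Since the discs $B_r(a_j)$ were removed in \eqref{def:W}, $\nabla\theta$ is smooth, hence bounded, on $\overline W$, and therefore
\[
\ell_\varphi(w)\le\Lambda_\gamma[w]_{BV}+\Lambda_\gamma\|\nabla\theta\|_{L^1(W)}\qquad(\varphi\in\mathcal F);
\]
taking the supremum gives $J_\gamma(w)\le\Lambda_\gamma([w]_{BV}+\|\nabla\theta\|_{L^1(W)})<\infty$ for $w\in BV(W)$.

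The substantive point is (iv). For $w\in W^{1,1}(W)$ and $\varphi\in C^1_c(W;\mathbb{R}^2)$ the integration by parts carries no boundary term, so $\ell_\varphi(w)=\int_W\nabla(w-\theta)\cdot\varphi\,dx$ and $J_\gamma(w)=\sup_{\varphi\in\mathcal F}\int_W\nabla(w-\theta)\cdot\varphi\,dx$. The upper bound $J_\gamma(w)\le\int_W\gamma(\nabla(w-\theta))\,dx$ is immediate from $\gamma=\gamma^{\circ\circ}$: if $\gamma^\circ(\varphi)\le1$ then $\nabla(w-\theta)\cdot\varphi\le\gamma^{\circ\circ}(\nabla(w-\theta))=\gamma(\nabla(w-\theta))$ a.e. For the matching lower bound I would select, for a.e.\ $x$, a vertex $q(x)$ of the polygon $\mathcal W_\gamma$ at which $v\mapsto\nabla(w-\theta)(x)\cdot v$ attains its maximum over $\mathcal W_\gamma$, this maximum being exactly $\gamma^{\circ\circ}(\nabla(w-\theta)(x))=\gamma(\nabla(w-\theta)(x))$; as $\mathcal W_\gamma$ has finitely many vertices, $q$ can be taken measurable and bounded with $\gamma^\circ(q)\le1$ a.e. It then remains to approximate $q$ by admissible test fields: extending $q$ by $0$, mollifying at scale $\delta$, and multiplying by a cutoff $\eta_\delta\in C^\infty_c(W)$ with $0\le\eta_\delta\le1$ and $\eta_\delta\to1$ on $W$, produces $\varphi_\delta\in C^\infty_c(W;\mathbb{R}^2)$ with $\varphi_\delta\to q$ a.e.\ and $|\varphi_\delta|\le\Lambda_\gamma$; crucially $\gamma^\circ(\varphi_\delta)\le1$ still holds because $\mathcal W_\gamma=\{\gamma^\circ\le1\}$ is closed, convex and contains $0$, hence is stable both under the integral average defining the mollification and under multiplication by a $[0,1]$-valued function. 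Dominated convergence, with dominating function $\Lambda_\gamma|\nabla(w-\theta)|\in L^1(W)$, then gives $\int_W\nabla(w-\theta)\cdot\varphi_\delta\,dx\to\int_W\gamma(\nabla(w-\theta))\,dx$, which is the lower bound and completes (iv).

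I expect the approximation step in (iv) to be the main obstacle: one must approximate the merely measurable optimal dual field $q$ by smooth compactly supported fields while preserving the pointwise constraint $\gamma^\circ(\varphi)\le1$. The remedy is the observation that the constraint set is a convex body containing the origin, so that mollification and cutoff multiplication are both constraint-preserving and no delicate truncation onto the $\gamma^\circ$-unit ball is needed. (The same argument works for any convex $\gamma$ obeying \eqref{gamma: convexity}--\eqref{gamma: positivity}; polygonality of $\mathcal W_\gamma$ enters only to make the selection of $q$ transparently measurable.)
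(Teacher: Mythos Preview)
Your arguments for (i)--(iii) match the paper's. For (iv) both proofs obtain the upper bound from $\gamma=\gamma^{\circ\circ}$, but the lower bound is handled differently. The paper first reduces to $w\in C^1_c(\mathbb R^2)$ via density in $W^{1,1}(W)$ and the Lipschitz estimate $|\gamma(p_1)-\gamma(p_2)|\le\Lambda_\gamma|p_1-p_2|$; for such $w$ the gradient $\nabla(w-\theta)$ is uniformly continuous on $\overline W$, so the paper covers $\overline W$ by finitely many balls $B_\delta(x_j)$, chooses near-optimal \emph{constants} $q_{\varepsilon,j}\in\mathcal W_\gamma$ for the values $\nabla(w-\theta)(x_j)$, and glues them via a partition of unity and a boundary cutoff into $\varphi_\varepsilon=\sum_j\tilde d_n\chi_j q_{\varepsilon,j}$. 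You instead keep $w\in W^{1,1}$ and approximate on the dual side: select a measurable pointwise maximizer $q(x)\in\mathcal W_\gamma$, mollify, and cut off. Your key observation---that the closed convex set $\mathcal W_\gamma\ni0$ is preserved both by mollification and by multiplication by a $[0,1]$-valued cutoff---is precisely what lets this direct route bypass the density reduction. The trade-off is that the lemma assumes only \eqref{gamma: convexity}--\eqref{gamma: positivity}, not \eqref{gamma: singularity}, so $\mathcal W_\gamma$ need not be a polygon and your ``pick a vertex'' selection is not literally available; you would have to invoke a standard measurable-selection argument for the compact-convex-valued map $p\mapsto\arg\max_{q\in\mathcal W_\gamma}p\cdot q$, whereas the paper's construction sidesteps this by using only finitely many fixed vectors.
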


\par
\bigskip
\noindent
See Appendix for the proof of Lemma \ref{lem: aniso total variation}.

According to the result of Lemma \ref{lem: aniso total variation},
we now define $E(\cdot; g) \colon L^2 (W) \to \mathbb{R} \cup \{ \infty \}$.
For given $f, g \in L^2 (W)$ and $\psi \colon W \to \mathbb{R}$
such that $ \varphi / \sqrt{\psi} \in L^2 (W)$
for every $\varphi \in L^2 (W)$, define 
\begin{equation}
 \label{def: ROF energy}
  E (w;g) :=
  \left\{
   \begin{array}{ll}
    {\displaystyle
     J_\gamma (w) - \int_W f w dx + \frac{1}{2h} \left\| \frac{w-g}{\sqrt{\psi}} \right\|_{L^2}^2 }
     & \mbox{if} \ w \in {L^2 (W) \cap} BV(W), \\
    \infty
     & \mbox{otherwise}. 
   \end{array}
  \right.
\end{equation}

\begin{lem}\label{BV and L^1 bound}
 Assume that \eqref{gamma: convexity}--{\eqref{gamma: positivity}} hold,
 and there exist positive constants $\alpha$, $A$
 satisfying
 \begin{equation}
  \label{velocity cut-off}
  0 < \alpha < \psi < A \quad \mbox{on} \ W.
 \end{equation}
 Let $w \in L^2 (W) \cap BV(W)$ be such that
 $E (w;g) \le R$ for some constant $R > 0$.
 Then, there exist positive constants $C_0$ and $C_1$ 
 such that
 \begin{align}
  \| w \|_{BV} + \| w \|_{L^2} 
  \le C_0 + C_1 \eta^2
  \label{estimate of BV and L^2 norm}
 \end{align}
 for $\eta = \sqrt{hA}$,
 where $\| w \|_{BV} = \| w \|_{L^1} + [w]_{BV}$.
\end{lem}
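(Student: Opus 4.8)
The plan is a routine a priori estimate, organized as a short bootstrap: first extract from $E(w;g)\le R$ a bound on the fidelity term and hence on $\|w\|_{L^2}$, then feed that back to bound $J_\gamma(w)$, and finally convert $J_\gamma(w)$ into a bound on $[w]_{BV}$. The one structural ingredient I would isolate at the outset is the comparison between $J_\gamma$ and the ordinary total variation. By \eqref{gamma: positivity} one has $\Lambda_\gamma^{-1}|p|\le\gamma(p)\le\Lambda_\gamma|p|$ for all $p\in\mathbb{R}^2$, and dualizing gives $\Lambda_\gamma^{-1}|p|\le\gamma^\circ(p)\le\Lambda_\gamma|p|$ as well; in particular $|p|\le\Lambda_\gamma^{-1}$ implies $\gamma^\circ(p)\le1$. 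Hence, given $\varphi\in C^1_c(W;\mathbb{R}^2)$ with $|\varphi|\le1$, the rescaled field $\Lambda_\gamma^{-1}\varphi$ is admissible in \eqref{aniso total variation}, so
\[
-\Lambda_\gamma^{-1}\int_W w\,\mathrm{div}\,\varphi\,dx \;\le\; J_\gamma(w) + \Lambda_\gamma^{-1}\int_W \nabla\theta\cdot\varphi\,dx \;\le\; J_\gamma(w) + \Lambda_\gamma^{-1}\|\nabla\theta\|_{L^1(W)}.
\]
Taking the supremum over all such $\varphi$,
\[
 [w]_{BV}\;\le\;\Lambda_\gamma J_\gamma(w) + \|\nabla\theta\|_{L^1(W)},
\]
and here $\|\nabla\theta\|_{L^1(W)}<\infty$ since $\nabla\theta$ is smooth, hence bounded, on $\overline{W}$ — the singularities of $\arg (x-a_j)$ having been excised by \eqref{def:W} — while $W$ has finite measure. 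Combined with $J_\gamma\ge0$ from Lemma \ref{lem: aniso total variation} and $\|w\|_{L^1}\le|W|^{1/2}\|w\|_{L^2}$ (Cauchy--Schwarz on the bounded set $W$), it therefore suffices to bound $\|w\|_{L^2}$ and $J_\gamma(w)$ by expressions of the form $C_0+C_1\eta^2$.

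Next I would estimate the fidelity term. Using $J_\gamma(w)\ge0$, the upper bound $\psi<A$ from \eqref{velocity cut-off}, Cauchy--Schwarz on $\int_W fw\,dx$, and the triangle inequality,
\[
\frac{1}{2hA}\|w-g\|_{L^2}^2 \;\le\; \frac{1}{2h}\Bigl\|\tfrac{w-g}{\sqrt{\psi}}\Bigr\|_{L^2}^2 \;\le\; R + \|f\|_{L^2}\|w\|_{L^2} \;\le\; R + \|f\|_{L^2}\bigl(\|w-g\|_{L^2}+\|g\|_{L^2}\bigr).
\]
With $X:=\|w-g\|_{L^2}$ and $\eta^2=hA$ this is the quadratic inequality $X^2-2\eta^2\|f\|_{L^2}X-2\eta^2(R+\|f\|_{L^2}\|g\|_{L^2})\le0$, so $X\le\eta^2\|f\|_{L^2}+\sqrt{\eta^4\|f\|_{L^2}^2+2\eta^2(R+\|f\|_{L^2}\|g\|_{L^2})}$; using $\sqrt{a+b}\le\sqrt a+\sqrt b$ and $\eta\le 1+\eta^2$, the right-hand side is at most $C_0'+C_1'\eta^2$ with constants depending only on $R$, $\|f\|_{L^2}$, $\|g\|_{L^2}$. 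Then $\|w\|_{L^2}\le X+\|g\|_{L^2}\le C_0''+C_1'\eta^2$. Plugging this back into $E(w;g)\le R$ and discarding the nonnegative fidelity term gives $J_\gamma(w)\le R+\|f\|_{L^2}\|w\|_{L^2}\le R+\|f\|_{L^2}(C_0''+C_1'\eta^2)$, and the comparison above then yields $[w]_{BV}\le C_0'''+C_1''\eta^2$. Adding the three bounds produces \eqref{estimate of BV and L^2 norm}.

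I do not expect a genuine obstacle; the only step needing any care is the first one — the passage from $J_\gamma(w)$ to $[w]_{BV}$ — which relies on the norm equivalence for $\gamma$ and $\gamma^\circ$ furnished by \eqref{gamma: positivity} together with the boundedness of $\nabla\theta$ on $\overline{W}$. Everything else is the routine bookkeeping of solving one scalar quadratic inequality and absorbing the mixed $\eta$ and $\eta^2$ terms into the target form $C_0+C_1\eta^2$.
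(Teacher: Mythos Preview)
Your proof is correct and follows essentially the same approach as the paper's: both use $J_\gamma\ge0$ together with $\psi<A$ to extract an $L^2$ bound on $w$ (the paper completes the square with $G=g+hAf$ where you solve the equivalent quadratic inequality in $\|w-g\|_{L^2}$), and both pass from $J_\gamma(w)$ to $[w]_{BV}$ via the rescaling $\tilde\varphi=\Lambda_\gamma^{-1}\varphi$ and the boundedness of $\nabla\theta$ on $\overline{W}$. The only organizational difference is that you isolate the comparison $[w]_{BV}\le\Lambda_\gamma J_\gamma(w)+\|\nabla\theta\|_{L^1(W)}$ up front and then bound $J_\gamma(w)$, whereas the paper inlines the test-function argument after already having the $L^2$ bound; this is purely cosmetic.
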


\begin{proof}
 Note that $C_0$, $C_1$ and $C_2$ 
 in the following discussions
 denote numerical constants, 
 which will be changed by calculations.

 We first demonstrate that
 \begin{equation}
  \label{L1 and L2 bounds}
  \max \{ \| w \|_{L^1}, \| w \|_{L^2} \} \le C_0 + C_1 \eta + C_2 \eta^2.
 \end{equation}
 By straightforward calculation, we have
 \begin{align}
  E (w;g) & \ge J_\gamma (w) + \frac{1}{2 h A} \int_W \left[ (w-g)^2 - 2 h A f w \right] dx
  \nonumber \\
  & = J_\gamma (w)
  + \frac{1}{2h A} \int_W \left[ (w- (g + h A f))^2 - (g + h A f)^2 + g^2 \right] dx
  \nonumber \\
  & \ge J_\gamma (w) + \frac{1}{2h A} \left( \| w - G \|_{L^2}^2
  - \| G \|_{L^2}^2 \right),
  \label{lower bound of ROF}
 \end{align}
 where $G = g + h A f$.
 Then, we first obtain
 \[
 \| w - G \|_{L^2} \le \sqrt{2 h A R + \| G \|_{L^2}^2}
  \le \sqrt{2 h A R} + \| G \|_{L^2}
 \]
 by $E (w;g) \le R$ and $J_\gamma (w) \ge 0$.
 Hence, we obtain
 \begin{align*}
  \| w \|_{L^2} & \le \sqrt{2 h A R} + 2 \| G \|_{L^2}
  \le \sqrt{2 h A R} + 2 (\| g \|_{L^2} + h A \| f \|_{L^2}) \\
  & = C_0 + C_1 \eta + C_2 \eta^2.  
 \end{align*}
 This also implies that
 \[
 \| w \|_{L^1} \le |W| \| w \|_{L^2} \le C_0 + C_1 \eta + C_2 \eta^2
 \]
 since $W$ is bounded.
 Hence, we obtain \eqref{L1 and L2 bounds}.

 For the bound of $[w]_{BV}$, one can find
 \[
  J_\gamma (w) - \int_W fw dx \le {E (w;g)} \le R.
 \]
 Let $\varphi \in C_c^1 (W; \mathbb{R}^2)$ be such that
 $|\varphi| \le 1$.
 Then, since $\tilde{\varphi} = \Lambda_\gamma^{-1} \varphi$ satisfies
 $\gamma^\circ (\tilde{\varphi}) \le 1$,
 we obtain
 \begin{align*}
  - \int_W w \mathrm{div} \varphi dx
  & = \Lambda_\gamma \left( - \int_W w \mathrm{div} \tilde{\varphi} dx \right) \\
  & \le \Lambda_\gamma \left( J_\gamma (w) + \int_W \nabla \theta \cdot \tilde{\varphi} dx \right) \\
  & \le \Lambda_\gamma \left( R + \int_W f w dx \right) + \int_W \nabla \theta \cdot \varphi 
  dx \\
  & \le {\Lambda_\gamma R + |W| \| \nabla \theta \|_\infty
  + \Lambda_\gamma \| f \|_{L^2} \| w \|_{L^2}.}
  %
 \end{align*}
 It implies
 \begin{align*}
  [w]_{BV} 
  & = C_0 + C_1 \eta + C_2 \eta^2, 
 \end{align*}
 and thus
 $\| w \|_{BV} \le C_0 + C_1 \eta + C_2 \eta^2$.
 Since $\eta \le (1 + \eta^2)/2$, we obtain \eqref{estimate of BV and L^2 norm}.
\end{proof}

\begin{thm}
 \label{existence of minimizer}
 Assume that \eqref{gamma: convexity}--\eqref{gamma: positivity} hold.
 Then, for every $f, g \in L^2 (W)$
 and $\psi \colon W \to \mathbb{R}$
 satisfying \eqref{velocity cut-off},
 there exists a unique minimizer $w^* \in L^2 (W) \cap BV(W)$
 of $E (w;g)$.
\end{thm}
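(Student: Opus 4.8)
The plan is to run the direct method of the calculus of variations, using Lemma~\ref{BV and L^1 bound} for the a priori bounds and Lemma~\ref{lem: aniso total variation} for the lower semicontinuity of the anisotropic perimeter term.

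First I would verify that $E(\cdot;g)$ is proper. Boundedness from below is already contained in the estimate \eqref{lower bound of ROF}, which gives $E(w;g) \ge -\tfrac{1}{2hA}\|G\|_{L^2}^2$ with $G = g + hAf$, so $\inf E(\cdot;g) > -\infty$. To see the infimum is finite, evaluate $E$ at $w \equiv 0$: the forcing term vanishes, the fidelity term equals $\tfrac{1}{2h}\|g/\sqrt{\psi}\|_{L^2}^2 < \infty$ since $\psi > \alpha$, and $J_\gamma(0) = \sup\{ -\int_W \nabla\theta\cdot\varphi\,dx : \gamma^\circ(\varphi)\le 1\} \le \Lambda_\gamma |W|\,\|\nabla\theta\|_{L^\infty(W)} < \infty$, where one uses that \eqref{gamma: positivity} forces $|\varphi|\le\Lambda_\gamma$ whenever $\gamma^\circ(\varphi)\le 1$, and that $\nabla\theta = \sum_j m_j \nabla\arg(x-a_j)$ is bounded on $W$ because $W$ stays at distance $\ge r$ from each center $a_j$. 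Hence $E(0;g)<\infty$ and $m := \inf E(\cdot;g)$ is a real number.

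Next, take a minimizing sequence $w_k \in L^2(W)\cap BV(W)$ with $E(w_k;g)\to m$, so $E(w_k;g)\le R$ for some $R$ and all large $k$. By Lemma~\ref{BV and L^1 bound} the sequence is bounded in $BV(W)$ and in $L^2(W)$. Since $\partial W$ is smooth, the embedding $BV(W)\hookrightarrow L^1(W)$ is compact, so along a subsequence (not relabeled) $w_k \to w^*$ in $L^1(W)$ and a.e.; the uniform $L^2$ bound together with a.e.\ convergence gives $w^* \in L^2(W)$, and after a further extraction $w_k \rightharpoonup w^*$ weakly in $L^2(W)$. Now $J_\gamma$ is lower semicontinuous along $L^1$-convergent sequences by Lemma~\ref{lem: aniso total variation}; the fidelity term $w\mapsto \tfrac{1}{2h}\|(w-g)/\sqrt{\psi}\|_{L^2}^2$ is convex and strongly continuous on $L^2(W)$ (using $\alpha<\psi<A$), hence weakly lower semicontinuous; and $w\mapsto\int_W fw\,dx$ is continuous for the weak $L^2$ topology. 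Combining these,
\[
 E(w^*;g) \le \liminf_{k\to\infty} E(w_k;g) = m ,
\]
so $w^*$ is a minimizer and $w^*\in L^2(W)\cap BV(W)$. Uniqueness then follows from strict convexity: the admissible set $L^2(W)\cap BV(W)$ is convex, $J_\gamma$ is convex, $w\mapsto-\int_W fw\,dx$ is affine, and $w\mapsto\|(w-g)/\sqrt{\psi}\|_{L^2}^2$ is strictly convex (the map $w\mapsto(w-g)/\sqrt{\psi}$ is injective and affine, and the squared $L^2$ norm is strictly convex), so $E(\cdot;g)$ is strictly convex on its domain and two distinct minimizers would give a strictly smaller value at their midpoint.

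The only genuinely delicate point is the joint passage to the limit: one must extract a single subsequence that converges strongly in $L^1(W)$ — so that the lower semicontinuity of $J_\gamma$ from Lemma~\ref{lem: aniso total variation} applies — and simultaneously weakly in $L^2(W)$ — so that the fidelity and forcing terms are controlled — and then check the two limits coincide. This is where the smoothness of $\partial W$ (for the compact $BV$ embedding) and the two-sided bound on $\psi$ are used; everything else is routine once Lemma~\ref{BV and L^1 bound} supplies the bounds.
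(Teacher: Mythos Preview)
Your proof is correct and follows essentially the same direct-method argument as the paper: bound below via \eqref{lower bound of ROF}, check $E(0;g)<\infty$, use Lemma~\ref{BV and L^1 bound} to get a minimizing sequence bounded in $L^2(W)\cap BV(W)$, extract a subsequence converging strongly in $L^1$ and weakly in $L^2$, apply the lower semicontinuity of $J_\gamma$ from Lemma~\ref{lem: aniso total variation}\eqref{J: lower semiconti}, and conclude uniqueness from the strict convexity of the fidelity term. If anything, your treatment of the lower semicontinuity of the three terms separately (and of the compatibility of the $L^1$-strong and $L^2$-weak limits) is more explicit than the paper's, which simply asserts that $E(\cdot;g)$ is lower semicontinuous in $L^1(W)$ by citing Lemma~\ref{lem: aniso total variation}\eqref{J: lower semiconti}.
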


\begin{proof}
 By \eqref{lower bound of ROF} we observe that
 \[
  - \infty < - \| G \|_{L^2}^2 / (2 h A) 
 \le \inf_{L^2 (W) \cap BV(W)} E (\cdot;g) \le E (0;g) < \infty.
 \]
 Thus, there exists $w_n \in L^2 (W) \cap BV(W)$ such that
 \[
  \lim_{n \to \infty} E (w_n;g) 
  = \inf_{w \in L^2 (W) \cap BV(W)} E (w;g).
 \]
 We may choose $w_n$ such that $E (w_n;g) \le E (0;g)$
 by taking $n$ large enough.
 Then, the sequence $\{ w_n \}$ 
 is bounded in $L^2 (W) \cap BV(W)$ by Lemma \ref{BV and L^1 bound}.
 This implies that there exists a subsequence of $w_n \in L^2 (W) \cap BV(W)$,
 which is still denoted by $w_n$,
 and $w^* \in L^2 (W) \cap BV(W)$ such that
 $w_n \rightharpoonup w^*$ 
 in $L^2 (W)$ and
 $\| w_n - w^* \|_{L^1} \to 0$ as $n \to \infty$;
 see \cite[\S 5.2.3]{EvansGariepy:book}.
 It implies
 \[
  E (w^*;g) \le \liminf_{n \to \infty} E (w_n;g) 
 = \inf_{w \in L^2 (W) \cap BV(W)} E (w;g)
 \]
 since $E(\cdot;g)$ is lower semicontinuous in $L^1 (W)$
 by Lemma \ref{lem: aniso total variation}\eqref{J: lower semiconti}.
 Hence, 
 $w^*$ is the minimizer of $E$.

 Uniqueness is derived from strictly convexity of $E (\cdot; g)$
 on $L^2 (W) \cap BV(W)$,
 which is obtained since
 $\| (w-g) / \sqrt{\psi} \|_{L^2}^2$ is strictly convex
 for $w \in L^2 (W) \cap BV(W)$.
 %
 %
 \if0{
 To show the uniqueness,
 let us set 
 \[
  I(u,v) = J_\gamma (u) - J_\gamma (v) - \int_W f(u-v) dx 
  + \frac{1}{h} \left( \frac{u-g}{\sqrt{\psi}}, \frac{u-v}{\sqrt{\psi}} \right)_{L^2}
 \]
 for $u, v \in L^2 (W) \cap BV(W)$, where
 $( \cdot, \cdot )_{L^2}$ is the usual inner product in $L^2 (W)$.
 Then, 
 we first demonstrate that the minimizer 
 $w^*$ of $E$
 satisfies the following inequality:
 \begin{align}
  I(w^*, w) \le 0
  \quad \mbox{for} \ w \in L^2 (W) \cap BV(W).
  \label{variational ineq}
 \end{align}
 In fact, 
 for every $w \in L^2 (W) \cap BV(W)$
 and $t \in (0,1]$, we have
 \begin{align*}
  E (w^*;g) 
  & \le E((1-t) w^* + t w;g) \\
  & \le (1-t) J_\gamma (w^*) + t J_\gamma (w) 
  - (1-t) \int_W f w^* dx - t \int_W f w dx \\
  & \quad + \frac{1}{2h} \int_W 
  \frac{(w^* - g)^2 - 2 t (w^* - g)(w^* - w) + t^2 (w^* - w)^2}{\psi} dx \\
  & = E (w^* ; g) - t I(w^*, w) 
  + \frac{t^2}{2h} \left\| \frac{w^* - w}{\sqrt{\psi}} \right\|_{L^2}^2.
 \end{align*}
 Here
 we have used the convexity of $J_\gamma$.
 Then, we obtain
 \[
  I(w^*,w) \le \frac{t}{2h} \left\| \frac{w^* - w}{\sqrt{\psi}} \right\|_{L^2}^2
 \]
 for every $t \in (0,1]$.
 By tending $t \to 0$ on the above inequality, 
 we obtain \eqref{variational ineq}.

 Finally, we demonstrate the uniqueness of the minimizer.
 Let $w_1, w_2 \in L^2 (W) \cap BV(W)$
 be the minimizer of $E$.
 Then, we have
 \[
  I(w_1, w) \le 0, \quad I(w_2, w) \le 0
  \quad \mbox{for} \ w \in L^2 (W) \cap BV(W)
 \]
 by \eqref{variational ineq}.
 Thus, we obtain
 \[
  0 \ge I(w_1, w_2) + I(w_2, w_1)
  = \frac{1}{h}
  \left\| \frac{w_1 - w_2}{\sqrt{\psi}} \right\|_{L^2}^2,
 \]
 which implies $w_1 = w_2$.
 }\fi
 \if0{
 \begin{align*}
  I_1 (w) & := J_\gamma (w_1) - J_\gamma (w)
  - \int_W f (w_1 - w) dx
  + \frac{1}{h} \left( \frac{w_1 - g}{\sqrt{\psi}}, 
  \frac{w_1 - w}{\sqrt{\psi}} \right)_{L^2} \le 0 
  \\
  I_2 (w) & := J_\gamma (w_2) - J_\gamma (w)
  - \int_W f (w_2 - w) dx
  + \frac{1}{h} \left( \frac{w_2 - g}{\sqrt{\psi}}, 
  \frac{w_2 - w}{\sqrt{\psi}} \right)_{L^2} \le 0 
 \end{align*}
 for every $w \in L^2 (W) \cap BV(W)$.
 These inequalities yield
 \begin{align*}
  0 & \ge I_1 (w_2) + I_2 (w_1) 
  = \frac{1}{h}
  \left\| \frac{w_1 - w_2}{\sqrt{\psi}} \right\|_{L^2}^2.  
 \end{align*}
 Hence, we obtain $w_1 = w_2$.
 }\fi
 %
 %
\end{proof}

%
%
%
%
%
%
%
%

We next discuss the second problem, i.e., 
the minimizer $w^* \in L^2 (W) \cap BV(W)$ of $E$ has
a function $\nabla w^*$ on $W$.
Now, let $w \in BV(W)$.
According to the theory of functions for bounded variation, 
there exists a Radon measure $\nu$ 
and a $\nu$-measurable function $\sigma \colon W \to \mathbb{R}^2$
satisfying;
\begin{itemize}
 \item $|\sigma| = 1$ $\nu$-a.e. in $W$, 
 \item $\int_W w \mathrm{div} \varphi dx 
       = - \int_W \varphi \cdot \sigma d \nu$
       for every $\varphi \in C^1_c (W; \mathbb{R}^2)$,
\end{itemize}
see \cite[\S 5]{EvansGariepy:book}.
Let $\sigma = (\sigma^1, \sigma^2)$ and define 
the signed measures
\[
 \nu^i (U) = \int_U \sigma^i d \nu \quad
 (i=1,2)
 \]
for every Borel set $U \subset W$.
Then, Lebesgue's decomposition theorem implies that
there exist signed measures $\nu^i_{\textup{ac}}$ and 
$\nu^i_{\textup{s}}$
such that
\begin{itemize}
 \item $\nu^i = \nu^i_{\textup{ac}} + \nu^i_{\textup{s}}$
       for $i=1,2$.
       This decomposition is unique
       for $i=1,2$.
 \item $\nu^i_{\textup{ac}}$ is absolutely continuous,
       and $\nu^i_{\textup{s}}$ is singular
       for the Lebesgue measure $\mathcal{L}^2$
       in $\mathbb{R}^2$,
 \item there exists a function $u^i \in L^1 (W)$ such that
       \[
	\nu^i_{\textup{ac}} (U) = \int_U u^i dx
        \quad (i=1,2)
       \]
       for every Borel set $U \subset W$.
\end{itemize}
By combining the above, we observe that
\begin{equation}
 \label{decomp of BV}
 \int_W w~\mathrm{div}\varphi dx
 = - \sum_{i=1}^2
 \left( \int_W \varphi^i d \nu^i_{\textup{ac}}
 + \int_W \varphi^i d \nu^i_{\textup{s}}
 \right)
 = - \sum_{i=1}^2
 \int_W u^i \varphi^i dx
 + \Phi_{\textup{s}} (\varphi)
\end{equation}
for every $\varphi = (\varphi^1, \varphi^2) \in C^1_c (W; \mathbb{R}^2)$,
where $\Phi_{\textup{s}} (\varphi)
:= - \sum_{i=1}^2 \int_W \varphi^i d \nu^i_{\textup{s}}$.
From \eqref{decomp of BV}, notice that  $w \in W^{1,1}_{\textup{loc}} (W)$ if and only if
$w \in L_{\textup{loc}}^1 (W)$, $(u^1, u^2) \in L^1_{\textup{loc}} (W)$ 
and $\Phi_s \equiv 0$.
Hence we define the weak derivative of $w \in BV(W)$ by 
\begin{equation}
 \label{BV grad}
 \nabla w := (u^1, u^2). 
\end{equation} 

Now, 
let $u_n \in L^2 (W) \cap BV(W)$ be given for our algorithm.
For positive constants $\alpha \ll 1$ and $A > \alpha$, 
set 
\begin{equation}
 \label{BV psi_n}
 \psi 
 = \psi_n := \min \{ \max \{ \gamma (\nabla u_n - \nabla \theta), \alpha \}, A \} 
\end{equation}
for $E (\cdot, u_n)$.
Then, the minimizer $u_{n+1}$ of $E(\cdot ;u_n)$ exists, 
and also has weak derivative $\nabla u_{n+1}$ in the sense of \eqref{BV grad}.
Thus, we can continue the time-stepping iterations of our algorithm.
Consequently, 
we 
conclude this section 
by presenting the following 
result for our proposed algorithms.

\par
\bigskip
\noindent
\begin{thm}
 \label{existence of minimizing movement}
 Let $f \in L^2 (W)$, $u_0 \in L^2 (W) \cap BV(W)$.
 Let $h > 0$, and $0 < \alpha < A < \infty$ be constants.
 Then, 
 there exists a unique sequence 
 $\{ u_n \}_{n \ge 0} \subset L^2 (W) \cap BV (W)$ such that
 \begin{align*}
  & u_{n+1} = \arg \min_{L^2 (W) \cap BV(W)}
  E (\cdot; u_n)
  \\
  & \qquad \mbox{with} \ 
  \psi 
  = \min \{ \max \{ \gamma (\nabla u_n - \nabla \theta), \alpha \}, A \},
 \end{align*}
 where $\nabla u_n$ is in the sense of \eqref{BV grad}.
\end{thm}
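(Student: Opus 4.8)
The plan is to construct the sequence $\{u_n\}$ inductively, invoking Theorem~\ref{existence of minimizer} at each step. The base case is given: $u_0 \in L^2(W) \cap BV(W)$. For the inductive step, suppose $u_n \in L^2(W) \cap BV(W)$ has been constructed. First I would verify that the weight $\psi = \psi_n := \min\{\max\{\gamma(\nabla u_n - \nabla\theta), \alpha\}, A\}$ is a legitimate choice for the functional $E(\cdot; u_n)$. Here $\nabla u_n$ is the $L^1(W)$ function extracted from the Lebesgue decomposition of the distributional derivative, as in \eqref{BV grad}; since $\nabla\theta$ is single-valued and (by the choice of $W$ away from the centers $a_j$) bounded on $W$, the composition $\gamma(\nabla u_n - \nabla\theta)$ is a measurable function on $W$, and the truncation immediately gives $\alpha < \psi_n < A$ pointwise, so condition \eqref{velocity cut-off} holds. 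Moreover $\varphi/\sqrt{\psi_n} \in L^2(W)$ whenever $\varphi \in L^2(W)$, since $1/\sqrt{\psi_n} \le 1/\sqrt{\alpha}$. Thus the hypotheses of Theorem~\ref{existence of minimizer} are met with $g = u_n$, and we obtain a unique minimizer $u_{n+1} = \arg\min_{L^2(W)\cap BV(W)} E(\cdot; u_n) \in L^2(W) \cap BV(W)$.

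The key point that closes the induction is that $u_{n+1}$, a priori only an element of $BV(W)$, actually possesses a weak gradient in the sense of \eqref{BV grad} — which is automatic from the discussion preceding the theorem: \emph{every} $w \in BV(W)$ admits the decomposition \eqref{decomp of BV}, and $\nabla w := (u^1, u^2) \in L^1(W)$ is defined regardless of whether the singular part $\Phi_s$ vanishes. Hence $\gamma(\nabla u_{n+1} - \nabla\theta)$ is again a well-defined measurable function on $W$, so the weight $\psi_{n+1}$ for the next step is legitimate, and the induction proceeds. This yields existence of the full sequence.

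For uniqueness of the sequence, I would argue by induction as well: $u_0$ is prescribed, and if $u_0, \dots, u_n$ are uniquely determined, then $\psi_n$ is uniquely determined (it depends only on $\nabla u_n$, which is unique as the absolutely continuous part in a Lebesgue decomposition that is itself unique), hence $E(\cdot; u_n)$ is a fixed functional, and Theorem~\ref{existence of minimizer} gives a \emph{unique} minimizer $u_{n+1}$. So the sequence is uniquely pinned down step by step.

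The main obstacle — really the only nontrivial issue — is making sure the weight $\psi_n$ is a well-defined measurable function on $W$ so that $E(\cdot; u_n)$ makes sense as a functional on $L^2(W) \cap BV(W)$; this requires carefully pointing to \eqref{BV grad} for the meaning of $\nabla u_n$ when $u_n \notin W^{1,1}(W)$, and noting that $\nabla\theta$, being the gradient of $\sum_j m_j \arg(x - a_j)$ on the punctured domain $W$, is smooth and bounded there. Everything else is a routine assembly of Theorem~\ref{existence of minimizer} with the observation that $BV$ functions always carry a weak gradient in the sense defined. I would also remark that $f \in L^2(W)$ is used at each step exactly as in the hypotheses of Theorem~\ref{existence of minimizer}, so no additional regularity on $f$ is needed across the iteration.
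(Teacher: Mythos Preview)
Your proposal is correct and follows essentially the same approach as the paper: the paper does not give a separate proof block for this theorem but rather states it as the conclusion of the preceding discussion, which consists precisely of defining $\nabla u_n$ via \eqref{BV grad}, observing that the truncated weight $\psi_n$ in \eqref{BV psi_n} satisfies \eqref{velocity cut-off}, invoking Theorem~\ref{existence of minimizer} to obtain the unique minimizer $u_{n+1} \in L^2(W)\cap BV(W)$, and iterating. Your write-up is in fact more explicit about the inductive structure and the uniqueness of the whole sequence than the paper's own exposition.
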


\section{Numerics for the proposed minimizing movements}

In this section, we will describe how we use the split Bregman method 
to efficiently construct the minimizer of $E(w;g)$ 
defined in \eqref{def: ROF energy}. 
We will see that the Split Bregman algorithm, 
combined with a special shrinkage formula, 
yields a simple algorithms for crystalline curvature motion.
The procedure is then summarized as Algorithm~\ref{algo: w/o dist}.



\subsection{The Split Bregman Method}

We now review the split Bregman method
due to \cite{OOTT:2011CMS}
for the calculation of the minimizer $w^*$
of $E(w;g)$. 

The key idea to find $w^*$ is to interpret the problem
to a constraint minimization problem
by dividing the dependent variable:
find a minimizer $(w^*,d^*)$ of 
\begin{align*}
 & F(w,d) := \int_W \gamma (d - \nabla \theta) dx
 - \int_W fw dx 
 + \frac{1}{2h} \left\| \frac{w - g}{\sqrt{\psi}} \right\|_{L^2}^2 \\
 & \qquad \mbox{subject to} \ d = \nabla w. 
\end{align*}
To solve this problem, 
we apply the Bregman iteration due to \cite{Bregman:1967, Osher_etal:2005}. 
Let us introduce the Bregman distance $D^{p,q}_F$ of the form
\[
 D^{p^*,q^*}_F (w,d;\hat{w},\hat{d})
 = F(w,d) - F(\hat{w},\hat{d})
 - \langle p^*, w-\hat{w} \rangle
 - \langle q^*, d-\hat{d} \rangle
\]
for $p^* \in \partial_w F(\hat{w},\hat{d})$
and $q^* \in \partial_d F(\hat{w},\hat{d})$,
where
\begin{align*}
 \partial_w F(\hat{w},\hat{d}) & = \{ p \in H^1 (W)^* ; \ 
 F(w,\hat{d}) \ge F(\hat{w},\hat{d}) - \langle p, w-\hat{w} \rangle 
 \ \mbox{for} \ w \in H^1 (W) \}, \\
 \partial_d F(\hat{w},\hat{d}) & = \{ q \in L^2 (W;\mathbb{R}^2)^* ; \ 
 F(\hat{w},d) \ge F(\hat{w},\hat{d}) - \langle q, d-\hat{d} \rangle 
 \ \mbox{for} \ d \in L^2 (W; \mathbb{R}^2) \}.
\end{align*}
Then, we set 
\begin{align*}
 (w^{k+1}, d^{k+1}) 
 & = \arg \min_{(w,d) \in H^1 (W) \times L^2 (W;\mathbb{R}^2)}
 \left[ D^{p^k, q^k}_F (w,d;w^k,d^k) 
 + \frac{\mu}{2} \| d - \nabla w \|_{L^2}^2 \right], \\
 (w^0, d^0) & = (g, 0), \quad p^k \in \partial_w F(w^k,d^k), \
 q^k \in \partial_d F(w^k,d^k).
\end{align*}
It should hold that
$(w^*,d^*) = \lim_{k \to \infty} (w^k,d^k)$.

Next, we rephrase the above iteration.
Since $(w,d) \mapsto d - \nabla w$ is linear,
the above iteration is equivalent to the following iteration:
\begin{align}
 \label{Bregman iteration: remain form1}
 (w^{k+1},d^{k+1})
 & = \arg \min_{(w,d) \in H^1 (W) \times L^2 (W;\mathbb{R}^2)}
 \left[ 
 F(w,d) + \frac{\mu}{2} \| d - \nabla w - b^k \|_{L^2}^2 \right], \\
 \label{Bregman iteration: remain form2}
 b^{k+1} & = b^k + \nabla w^{k+1} - d^{k+1}, \\
 \label{Bregman iteration: remain form3}
 (w^0, d^0, b^0 ) & = (g, 0, 0).
\end{align}
We solve \eqref{Bregman iteration: remain form1}--\eqref{Bregman iteration: remain form3}
by the following alternate iteration:
\begin{align}
 \label{split1}
 w^k_{\ell+1} & = \arg \min_{w \in H^1 (W)}
 \left[
 F(w,d^k_\ell) + \frac{\mu}{2} \| d^k_\ell - \nabla w - b^k \|_{L^2}^2 \right], 
 \\
 \label{split2}
 d^k_{\ell+1} & = \arg \min_{d \in L^2 (W;\mathbb{R}^2)}
 \left[
 F(w^k_{\ell+1},d) + \frac{\mu}{2} \| d - \nabla w^k_{\ell+1} - b^k \|_{L^2}^2 \right],  \\
 \label{split3}
 (w^k_0,d^k_0) & = (w^k, d^k).
\end{align}
For the problem \eqref{split1}, we consider the Euler-Lagrange equation
of the functional
\[
 w \mapsto - \int_W fw dx 
 + \frac{1}{2h} \left\| \frac{w - g}{\sqrt{\psi}} \right\|_{L^2}^2
 + \frac{\mu}{2} \| d^k_\ell - \nabla w - b^k \|_{L^2}^2,
\]
and then solve the following elliptic equation
with the boundary condition
\begin{align}
 \label{inner loop eq1}
 & w - h \mu \psi \Delta w 
 = g + h \psi (f - \mu \mathrm{div} (d^k_\ell - b^k))
 & & \mbox{in} \ W, \\
 \label{inner loop bc1}
 & (d^k_\ell - \nabla w - b^k) \cdot \vec{\nu} 
 = 0
 & & \mbox{on} \ \partial W,
\end{align}
where $\vec{\nu}$ denotes the outer unit normal vector field
of $W$.
The minimizing problem \eqref{split2} is
to find the minimizer of
\[
 d \mapsto \int_W \gamma (d - \nabla \theta) dx
 + \frac{\mu}{2} \| d - \nabla w^k_{\ell+1} - b^k \|_{L^2}^2.
\]
It is solved by considering the minimizer of the integrand, i.e.,
\begin{equation}
 \label{polyhedral shrinkage problem}
 d^k_{\ell+1} (x) 
 = \arg \min_{d \in \mathbb{R}^2}
 \left[ \gamma (d - \nabla \theta (x)) 
 + \frac{\mu}{2} | d - \nabla w^k_{\ell+1} (x) - b^k (x)|^2 \right]
\end{equation}
for every $x \in \overline{W}$.
It is mentioned in the next subsection.

\bigskip
\noindent
\begin{rem}
 \if0{
 According to theorem \ref{existence of minimizing movement},
 we introduce a cut-off 
 \[
  \psi = \psi_\alpha = \max \{ \gamma (\nabla (g - \theta)), \alpha \}
 \]
 with a small constant $\alpha > 0$ to consider the minimizer of
 functionals in this section.
 However, as we will mention in \S \ref{sec: discretization},
 we use $\psi = \gamma (\nabla (g - \theta))$ directly
 to solve \eqref{inner loop eq1}.
 }\fi
 Note that we shall use $\psi = \gamma (\nabla (g - \theta))$
 directly for \eqref{inner loop eq1} 
 as we will mention in \S \ref{sec: discretization}.
 In this case, one finds that
 \eqref{inner loop eq1} is degenerate in an interior of the set
 where $\nabla (g - \theta) = 0$.
 However, numerically it means $w^* = g$ overthere.
 It seems to be consistent with
 the minimizing movement because of the last term of $E(w;g)$. 
\end{rem}

\subsection{Shrinkage function}

We calculate the polyhedral shrinkage function
for the second minimizing problem \eqref{polyhedral shrinkage problem}
by following \cite{OOTT:2011CMS}.
By considering $\tilde{\gamma} (p) = (1/\mu) \gamma (p)$
instead of $\gamma (p)$ if necessary,
\eqref{polyhedral shrinkage problem} is summarized as follows;
find a minimizer $x^*$ of 
\begin{equation}
 \label{minimizing function}
 x \mapsto \gamma (x - z) + \frac{1}{2} |x - y|^2
\end{equation}
for given $y, z \in \mathbb{R}^2$.

First, we consider the necessary condition 
for the minimizer $x^* = x^* (y,z)$ of \eqref{minimizing function}
without the assumption \eqref{gamma: singularity}.
Remark that $\gamma (x)$ is represented as
\[
 \gamma (x) = \sup_{p \in \mathcal{W}_\gamma} p \cdot x
\]
when $\gamma$ is convex,
where 
$\mathcal{W}_\gamma = \{ p \in \mathbb{R}^2 ; \ \gamma^\circ (p) \le 1 \}$,
and $\gamma^\circ (p) = \sup \{ p \cdot q ; \ \gamma (q) \le 1 \}$.
%
The following results are revised version of 
the formula of $x^*$ and its corollary 
due to \cite{OOTT:2011CMS}.
Since the proofs are similar as those, we omit them.
See also \cite{O:2019RIMS}.

\begin{lem}
 \label{lemma 4.1}
 Let $x^* = x^*(y,z)$ be the minimizer of \eqref{minimizing function}.
 Then,
 \[
  x^* = y - P_{\mathcal{W}_\gamma} (y-z),
 \]
 where $P_{\mathcal{W}_\gamma} \colon \mathbb{R}^2 \to \mathcal{W}_\gamma$
 {is} the projection map 
 of $x$ onto $\mathcal{W}_\gamma$, which is
 \[
  P_{\mathcal{W}_\gamma} (x) = \arg \min_{y \in \mathcal{W}_\gamma} |y-x|^2.
 \]
\end{lem}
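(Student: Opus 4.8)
The plan is to characterize $x^*$ through the first-order optimality condition of the strictly convex functional in \eqref{minimizing function}, and then to recognize that condition as the variational characterization of the Euclidean projection onto $\mathcal{W}_\gamma$. Write $F(x) = \gamma(x-z) + \tfrac12 |x-y|^2$. Since $\gamma$ is finite, convex and continuous on all of $\mathbb{R}^2$ (by \eqref{gamma: convexity}--\eqref{gamma: positivity}) and the quadratic term is smooth, $F$ is strictly convex and coercive, hence admits a unique minimizer $x^*$, characterized by $0 \in \partial F(x^*)$. Because $\tfrac12|x-y|^2$ is differentiable, the sum rule for subdifferentials applies and gives $\partial F(x) = \partial\gamma(x-z) + \{x-y\}$, so the optimality condition reads
\[
 y - x^* \in \partial\gamma(x^* - z).
\]

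The next step is to identify $\partial\gamma$. Since $\gamma$ is convex and positively homogeneous of degree $1$, we have $\gamma = \gamma^{\circ\circ}$, i.e. $\gamma(v) = \sup\{ v\cdot p : \gamma^\circ(p) \le 1 \} = \sup_{p\in\mathcal{W}_\gamma} v\cdot p$; thus $\gamma$ is precisely the support function of the closed convex set $\mathcal{W}_\gamma$. The standard description of the subdifferential of a support function then yields
\[
 \partial\gamma(v) = \{ p \in \mathcal{W}_\gamma : v\cdot p = \gamma(v) \} = \arg\max_{p\in\mathcal{W}_\gamma} v\cdot p .
\]
Combining this with the optimality condition, $x^*$ is the minimizer if and only if $y - x^* \in \mathcal{W}_\gamma$ and $(x^*-z)\cdot(y-x^*) \ge (x^*-z)\cdot p$ for every $p \in \mathcal{W}_\gamma$.

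Finally, set $a = y - z$ and $q = y - x^*$. The displayed statement says exactly that $q \in \mathcal{W}_\gamma$ and $(a-q)\cdot(p-q) \le 0$ for all $p \in \mathcal{W}_\gamma$, which is the variational inequality characterizing the metric projection, i.e. $q = P_{\mathcal{W}_\gamma}(a) = P_{\mathcal{W}_\gamma}(y-z)$. Hence $x^* = y - P_{\mathcal{W}_\gamma}(y-z)$, and uniqueness is inherited from the strict convexity of $F$. (Equivalently, a one-line argument via Moreau's identity works: the convex conjugate of the support function $\gamma$ is the indicator $\iota_{\mathcal{W}_\gamma}$, whose proximal map is $P_{\mathcal{W}_\gamma}$, so $\mathrm{prox}_\gamma(a) = a - P_{\mathcal{W}_\gamma}(a)$ and $x^* = z + \mathrm{prox}_\gamma(y-z)$; this is the route taken in \cite{OOTT:2011CMS, O:2019RIMS}.)

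The computation is essentially routine; the only points requiring care are (i) checking that the subdifferential sum rule applies, which it does since $\gamma$ is everywhere finite and continuous, and (ii) the identification of $\partial\gamma$ with the exposed face of $\mathcal{W}_\gamma$, which must be phrased for a general — possibly non-symmetric, possibly polygonal — $\gamma$ and in particular does not rely on the crystalline assumption \eqref{gamma: singularity}. No genuine obstacle is expected.
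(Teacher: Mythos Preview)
Your argument is correct; the paper itself omits the proof of this lemma, referring instead to \cite{OOTT:2011CMS, O:2019RIMS}, which are precisely the sources you invoke for the Moreau-identity route. Your subdifferential/variational-inequality derivation is the standard one and is equivalent to that approach.
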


\begin{cor} \label{null situation}
 It holds that $y-z \in \mathcal{W}_\gamma$
 if and only if $x^* (y,z) = z$.
\end{cor}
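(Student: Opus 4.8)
The plan is to derive this characterization directly from Lemma~\ref{lemma 4.1}, which already supplies the closed-form expression $x^*(y,z) = y - P_{\mathcal{W}_\gamma}(y-z)$ for the minimizer. Setting $p := y - z$, I would first rewrite the target equivalence purely in terms of the projection: since $x^*(y,z) = y - P_{\mathcal{W}_\gamma}(p)$, the equation $x^*(y,z) = z$ holds if and only if $y - P_{\mathcal{W}_\gamma}(p) = z$, i.e. if and only if $P_{\mathcal{W}_\gamma}(p) = y - z = p$. Thus the corollary reduces to the statement that the metric projection onto $\mathcal{W}_\gamma$ fixes a point $p$ if and only if $p \in \mathcal{W}_\gamma$.

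Next I would verify this fixed-point property of the projection, which hinges only on $\mathcal{W}_\gamma$ being a nonempty, closed, convex subset of $\mathbb{R}^2$ (it is a convex polygon, hence a nonempty compact convex set, so $P_{\mathcal{W}_\gamma}$ is well-defined and single-valued). For the ``only if'' direction, note that $P_{\mathcal{W}_\gamma}$ by definition takes values in $\mathcal{W}_\gamma$; hence $P_{\mathcal{W}_\gamma}(p) = p$ immediately forces $p \in \mathcal{W}_\gamma$. For the ``if'' direction, suppose $p \in \mathcal{W}_\gamma$; then $|p - p|^2 = 0$ is the global minimum of $q \mapsto |q - p|^2$ over $\mathbb{R}^2$, and in particular over $\mathcal{W}_\gamma$, so $p$ is itself the (unique) nearest point of $\mathcal{W}_\gamma$ to $p$, giving $P_{\mathcal{W}_\gamma}(p) = p$. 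Chaining these equivalences yields $x^*(y,z) = z \iff P_{\mathcal{W}_\gamma}(y-z) = y-z \iff y-z \in \mathcal{W}_\gamma$, which is exactly the claim.

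There is essentially no hard step here: once Lemma~\ref{lemma 4.1} is invoked, the corollary is a one-line consequence of the elementary fact that a convex projection fixes precisely the points already lying in the target set. The only point meriting a sentence of care is the uniqueness/well-definedness of $P_{\mathcal{W}_\gamma}$, which justifies treating $x^*(y,z) = z$ as a genuine equivalence rather than merely an implication; this is guaranteed by the strict convexity of the squared-distance functional together with the convexity and closedness of $\mathcal{W}_\gamma$. Accordingly, I would expect the write-up to consist of the substitution $p = y-z$, the reduction to the fixed-point condition, and the two short implications above.
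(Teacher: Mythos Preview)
Your proposal is correct. The paper actually omits the proof of this corollary (and of Lemma~\ref{lemma 4.1}), referring instead to \cite{OOTT:2011CMS} and \cite{O:2019RIMS}; your argument---reducing via Lemma~\ref{lemma 4.1} to the elementary fixed-point characterization of the metric projection onto a closed convex set---is exactly the natural derivation and matches what those references do.
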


Let us consider the remaining case when $y - z \notin \mathcal{W}_\gamma$.
From here on, let $\gamma$ be a convex and piecewise linear function,
i.e.,
\[
 \gamma (p) = \max_{0 \le j \le N_\gamma - 1} n_j \cdot p,
\]
where $n_j = r_j (\cos \vartheta_j, \sin \vartheta_j)$ 
($r_j > 0$ is a constant).
We here assume that 
\eqref{polygonal angle}--\eqref{minimal condi}
hold for $\{ n_j ; \ j=0, \ldots, N_\gamma - 1 \}$.
\if0{
\begin{enumerate}
 \renewcommand{\theenumi}{W\arabic{enumi}}
 \item 
       $\theta_j \in [\theta_0, \theta_0 + 2 \pi]$ for 
       $j = 0,1,2, \ldots, N_\gamma - 1$,
 \item 
       $\theta_j < \theta_{j+1} < \theta_j + \pi$
       for $j = 0, 1, \ldots, N_\gamma - 1$,
       where $\theta_{N_\gamma} = \theta_0 + 2 \pi$,
 \item 
       the set $\{ n_j ; \ j = 0, 1, 2, \ldots, N_\gamma - 1 \}$
       is minimal, i.e.,
       $\gamma (p) \neq \max \{n_j \cdot p ; \ j \neq k \}$
       for any $k \in \{ 0, \ldots, N_\gamma - 1 \}$.
\end{enumerate}
}\fi
Note that,
when \eqref{polygonal angle} and \eqref{convexity angle}
hold, then
\eqref{minimal condi} is equivalent to the
following property:
\begin{itemize}
 \item[\eqref{minimal condi}']
	      $Q_i = \{ p \neq 0 ; \ n_i \cdot p > n_j \cdot p \quad
	      \mbox{for} \ j \neq i \} \neq \emptyset$,
	      and $Q_i = R_{i,i-1} \cap R_{i,i+1}$,
	      where $R_{j,k} = \{ p \neq 0; \ n_j \cdot p > n_k \cdot p \}$,
	      and $R_{N_\gamma - 1, N_\gamma} := R_{N_\gamma - 1, 0}$.
\end{itemize}
Note that \eqref{minimal condi}' 
implies $\partial Q_i = \Xi_{i-1,i} \cup \Xi_{i,i+1} \cup \{ 0 \}$,
where $\Xi_{i,i+1} = \{ p \neq 0 ; \ n_i \cdot p = n_{i+1} \cdot p > 0 \}$.
Moreover, $\gamma$ is smooth on $Q_i$ and has singularities on 
each $\Xi_{i,i + 1}$.
By using the above notations, we now characterize $x^*$
when $y - z \notin \mathcal{W}_\gamma$.

\begin{lem} \label{necessary condi.}
 Let $\gamma (p) = \max_{0 \le j \le N_\gamma - 1} n_j \cdot p$ 
 with $n_j = r_j (\cos {\vartheta_j}, \sin {\vartheta_j})$ for $r_j > 0$
 and ${\vartheta_j} \in \mathbb{R}$,
 and assume that \eqref{polygonal angle}--\eqref{minimal condi} hold.
 Let $y, z \in \mathbb{R}^2$ satisfy 
 $y - z \notin \mathcal{W}_\gamma$.
 Set
 \begin{align}
  \label{coef subdiff: result}
   \lambda_i & = \frac{(y-z) \cdot (n_i - n_{i+1}) 
  - n_i \cdot n_{i+1} + |n_{i+1}|^2}{|n_i - n_{i+1}|^2}, \\
  \label{vec subdiff}
  \xi_i & = \lambda_i n_i + (1-\lambda_i) n_{i+1}.
 \end{align}
 Let $x^*$ be the minimizer of \eqref{minimizing function}.
 Then, the following hold.
 \begin{enumerate}
  \item If $x^* - z \in \Xi_{i,i+1}$,
	then
	\begin{equation}
	 \label{min on edge}
	 \lambda_i \in [0,1], \quad \mbox{and} \quad 
	  (y-z - \xi_i) \cdot \xi_i \ge 0.
	\end{equation}
	Moreover,
	$x^* = y - \xi_i$. 
  \item If $x^* - z \in Q_i$,
	then
	\begin{equation}
	 \label{min in facet}
	  \lambda_i > 1, \quad \mbox{and} \quad \lambda_{i-1} < 0.
	\end{equation}
	Moreover,
	$x^* = y - n_i$.
 \end{enumerate}
\end{lem}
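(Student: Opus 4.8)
The plan is to read off every assertion from the first-order optimality condition for the convex minimization \eqref{minimizing function}. Since $\gamma$ is convex and $x\mapsto\tfrac12|x-y|^2$ is smooth, $x^*$ minimizes \eqref{minimizing function} if and only if $y-x^*\in\partial\gamma(x^*-z)$; equivalently, this is the content of $y-x^*=P_{\mathcal{W}_\gamma}(y-z)$ from Lemma~\ref{lemma 4.1} together with the variational inequality characterizing the metric projection, since $\gamma$ is the support function of $\mathcal{W}_\gamma$. Because $y-z\notin\mathcal{W}_\gamma$, Corollary~\ref{null situation} gives $x^*\neq z$, so $p^*:=x^*-z\neq 0$ and $p^*$ lies in exactly one of the open cells $Q_i$ or on exactly one of the singular rays $\Xi_{i,i+1}$. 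The key structural input I would use is the subdifferential of the piecewise linear $\gamma$: on $Q_i$ one has $\gamma(p)=n_i\cdot p$, hence $\partial\gamma(p^*)=\{n_i\}$; on $\Xi_{i,i+1}$ the two active linear pieces are $p\mapsto n_i\cdot p$ and $p\mapsto n_{i+1}\cdot p$, hence $\partial\gamma(p^*)=\mathrm{conv}\{n_i,n_{i+1}\}=\{\lambda n_i+(1-\lambda)n_{i+1}:\lambda\in[0,1]\}$. This last identity is exactly what introduces the free scalar parameter, and ultimately $\lambda_i$, into the statement.

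For part (i), assuming $p^*=x^*-z\in\Xi_{i,i+1}$, I would first write $y-x^*=\lambda n_i+(1-\lambda)n_{i+1}$ for some $\lambda\in[0,1]$ from the subdifferential inclusion, and then impose the defining identity $n_i\cdot p^*=n_{i+1}\cdot p^*$, i.e.\ $(x^*-z)\cdot(n_i-n_{i+1})=0$. Substituting $x^*=y-\lambda n_i-(1-\lambda)n_{i+1}$ turns this into a scalar linear equation in $\lambda$ with coefficient $|n_i-n_{i+1}|^2\neq0$, whose unique solution is precisely $\lambda=\lambda_i$ of \eqref{coef subdiff: result}; hence $\lambda_i\in[0,1]$, $y-x^*=\xi_i$ as in \eqref{vec subdiff}, and $x^*=y-\xi_i$. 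The remaining inequality in \eqref{min on edge} is then immediate from Euler's identity for the positively $1$-homogeneous convex $\gamma$: since $\xi_i\in\partial\gamma(p^*)\subseteq\mathcal{W}_\gamma$ one has $(y-z-\xi_i)\cdot\xi_i=p^*\cdot\xi_i=\gamma(p^*)\geq 0$.

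For part (ii), assuming $p^*=x^*-z\in Q_i$, the inclusion $y-x^*\in\partial\gamma(p^*)=\{n_i\}$ forces $x^*=y-n_i$, which is the claimed formula. It then remains to verify the sign conditions \eqref{min in facet}. A one-line rearrangement of the definition \eqref{coef subdiff: result}, using $x^*=y-n_i$, gives $(\lambda_i-1)|n_i-n_{i+1}|^2=(x^*-z)\cdot(n_i-n_{i+1})$ and $\lambda_{i-1}|n_{i-1}-n_i|^2=(x^*-z)\cdot(n_{i-1}-n_i)$; since $p^*\in Q_i$ means $n_i\cdot p^*>n_j\cdot p^*$ strictly for all $j\neq i$, in particular $(x^*-z)\cdot(n_i-n_{i+1})>0$ and $(x^*-z)\cdot(n_{i-1}-n_i)<0$, which yields $\lambda_i>1$ and $\lambda_{i-1}<0$.

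The only step that needs genuine care is the identification of $\partial\gamma$ on the singular rays $\Xi_{i,i+1}$ as the edge $\mathrm{conv}\{n_i,n_{i+1}\}$ — together with the bookkeeping, under conditions \eqref{polygonal angle}--\eqref{minimal condi}, that exactly $n_i,n_{i+1}$ are active on $\Xi_{i,i+1}$ and exactly $n_i$ on $Q_i$, and that the $Q_i$ and $\Xi_{i,i+1}$ partition $\mathbb{R}^2\setminus\{0\}$. Everything else is elementary linear algebra, and, as for the companion statements borrowed from \cite{OOTT:2011CMS}, no converse assertions are required here.
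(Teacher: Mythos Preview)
Your argument is correct: the optimality condition $y-x^*\in\partial\gamma(x^*-z)$ together with the explicit subdifferential of the piecewise linear $\gamma$ (namely $\{n_i\}$ on $Q_i$ and the edge $\mathrm{conv}\{n_i,n_{i+1}\}$ on $\Xi_{i,i+1}$) yields all the claims after the short linear-algebra computations you outline, and the Euler identity $\xi\cdot p=\gamma(p)$ for $\xi\in\partial\gamma(p)$ gives the sign condition in \eqref{min on edge}. The paper in fact omits the proof of this lemma, referring to \cite{OOTT:2011CMS} and \cite{O:2019RIMS}; your subdifferential route is the natural one and is equivalent to the projection viewpoint of Lemma~\ref{lemma 4.1}, since $\mathcal{W}_\gamma=\mathrm{conv}\{n_0,\ldots,n_{N_\gamma-1}\}$ and $\partial\gamma(p)$ is precisely the face of $\mathcal{W}_\gamma$ exposed by $p$.
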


According to Corollary \ref{null situation} and Lemma \ref{necessary condi.},
we introduce the following procedure to obtain $d^k_{j+1}$.

\bigskip
\noindent
\textbf{\underline{Scheme (Sh) to obtain $d^k_{\ell+1}$ of \eqref{split2}.}}

We do the following for every $x \in \overline{W}$. 
Set $y = \nabla w^k_{\ell+1} (x) + b^k (x)$
and $z = \nabla \theta (x)$.

\medskip
\noindent
\textbf{Step 1.
Check either $y-z \in \mathcal{W}_\gamma$
(i.e. $\gamma^\circ (y-z) \le 1$)
or not.}

If $y-z \in \mathcal{W}_\gamma$,
then $x^* = z$ by Corollary \ref{null situation}.
Otherwise, we go to the next step.

\medskip
\noindent
\textbf{Step 2.
When $y-z \notin \mathcal{W}_\gamma$,
check either $x^* - z \in \bigcup_{i=0}^{N_\gamma - 1} \Xi_{i,i+1}$
or $x^* - z \in \bigcup_{i=0}^{N_\gamma - 1} Q_i$.}

Note that Lemma \ref{necessary condi.} gives a necessary condition
for the minimizer $x^*$.
Then, we can divide the situation into the following two cases.

\medskip
\noindent
\textbf{Case A. 
There is no $i \in \{ 0, 1, 2, \ldots, N_\gamma - 1 \}$
satisfying \eqref{min on edge}.}

In this case we find $x^* - z \notin \bigcup_{i=0}^{N_\gamma - 1} \Xi_{i,i+1}$
and then $x^* - z \in \bigcup_{i=0}^{N_\gamma - 1} Q_i$.
Therefore, find $i_0 \in \{0, 1, \ldots, N_\gamma - 1\}$ satisfying
\eqref{min in facet}.
If there exists such an $i_0$ uniquely,
then set
\[
 x^* = y - n_{i_0}.
\]
If there exist several $i$ satisfying
\eqref{min in facet},
choose $i_0$ 
which minimize
\[
 i \mapsto \gamma (\tilde{x}^*_i - z) + \frac{1}{2} |\tilde{x}^*_i - y|^2
 \quad \mbox{subject to \eqref{min in facet}},
\] 
where $\tilde{x}^*_i = y - n_i$.

\medskip
\noindent
\textbf{Case B. 
There is no $i \in \{ 0, 1, 2, \ldots, N_\gamma - 1 \}$
satisfying \eqref{min in facet}.}

In this case we find $x^* - z \notin \bigcup_{i=0}^{N_\gamma - 1} Q_i$
and then $x^* - z \in \bigcup_{i=0}^{N_\gamma - 1} \Xi_{i,i+1}$.
Therefore, find $i_0 \in \{ 0, 1, \ldots, N_\gamma - 1 \}$ satisfying
\eqref{min on edge}.
If there exists such an $i_0$ uniquely, then set
\[
 x^* = y - \xi_{i_0}.
\]
If there exist several $i$ satisfying
\eqref{min on edge},
choose $i_0$ as which minimize
\[
 i \mapsto \gamma (\bar{x}^*_i - z) + \frac{1}{2} |\bar{x}^*_i - y|^2
 \quad \mbox{subject to \eqref{min on edge}},
\]
where ${\displaystyle \bar{x}^*_i = y - \xi_i}$.


\subsection{Algorithm and discretization}\label{algorithm}

We begin by 
outlining the key steps in Algorithm \ref{algo: w/o dist}. 
First, we set up the initial datum $u_0 \in C(\overline{W})$
and $\theta$ for given initial curve $\Sigma_0$ and its orientation
of the evolution.
Accordingly, the function $u_{n+1}$ for $n \ge 0$ is determined by
\begin{equation}
    u_{n+1} := \arg \min_{w}{E} (w;u_n).
\end{equation}
Then, we obtain the sequence of functions
$u_n$ and then the curves $\Sigma_n$ approximating a solution
$\Sigma(t_n)$ at $t_n = nh$ by 
\begin{equation}
 \label{discrete lv form}
 \Sigma_n 
 := \{ x \in \overline{W} ; \ u_n (x) - \theta (x) \equiv 0
 \mod 2 \pi \mathbb{Z} \}. 
\end{equation}


 








\begin{algorithm}[htbp]
 \caption{Minimizing movement of spiral curves}
 \label{algo: w/o dist}
 \setstretch{1.2}
 \SetKwRepeat{Do}{do}{while}

 \KwIn{$\Sigma_0 \subset \overline{W}$
 and $u_0 \in C(\overline{W})$ such that
 $\Sigma_0 = \{ u_0 - \theta \equiv 0 \}$.}

 \KwOut{$\Sigma (T) = \{ u(T) - \theta \equiv 0 \}$
 with some function $u(T) \colon \overline{W} \to \mathbb{R}$.}

 (Time step) \For{$n=0, 1, \ldots, [T/h]-1$}{
   Set $g = u_n$\;
   Initialize $w^0 = u_n$, $b^0 = d^{0} = 0$\;

   (Outer loop) \For{$k=0,1,\ldots$}{
     Initialize $w^k_0 = w^k$, $d^k_0 = d^k$\;

     (Inner loop) \For{$\ell=0,1,2,\ldots$}{
       Solve \eqref{inner loop eq1}--\eqref{inner loop bc1}
       with $\psi = \gamma (\nabla (u_n - \theta))$ to obtain $w^k_{\ell+1}$\;
 
       Calculate $d^k_{\ell+1}$ with Scheme (Sh)\;
       \lIf{$|F^k_{\mu,\alpha} (w^k_{\ell+1} ,d^k_{\ell+1}; g) 
       - F^k_{\mu,\alpha} (w^k_\ell, d^k_\ell; g)| < \varepsilon_{in}$}{break}
     }
     Set $w^{k+1} = w^k_{j+1}$, $d^{k+1} = d^k_{j+1}$\;
     Set $b^{k+1} = b^k + \nabla w^{k+1} - d^{k+1}$\;
     \lIf{$|F^k_{\mu,\alpha} (w^{k+1}, d^{k+1}; g) 
     - F^k_{\mu,\alpha} (w^k, d^k; g)| < \varepsilon_{out}$}{break}
   }
   Set $u_{n+1} = w^{k+1}$\;
 }
\end{algorithm}

In the algorithm, one can find the following two loops:
\begin{description}
 \item[Outer loop] Finding $(w^{k+1}, d^{k+1})$ from $(w^k,d^k)$ 
	      by \eqref{Bregman iteration: remain form1}--\eqref{Bregman iteration: remain form3},
 \item[Inner loop] Finding $(w^k_{\ell + 1},d^k_{\ell+1})$ 
	      from $(w^k_\ell,d^k_\ell)$
	      by \eqref{split1}--\eqref{split3}.
\end{description}
In these loops, we 
compute the functional 
\begin{align*}
 F_\mu^k (w,d;g)
 := & F(w,d) + \frac{\mu}{2} \| d - \nabla w - b^k \|_{L^2}^2 \\
 = & \int_W \gamma (d - \nabla \theta) dx - \int_W fw dx
 + \frac{1}{2h} \left\| \frac{w - g}{\sqrt{\psi}} \right\|_{L^2}^2 \\
 & \qquad + \frac{\mu}{2} \| d - \nabla w - b^k \|_{L^2}^2
\end{align*}
to check 
if the functions $(w^k_{\ell}, d^k_\ell)$ 
or $(w^k,d^k)$ reaches to the desired minimizer
in the Inner or Outer loop, respectively.
Here,  $\psi = \gamma (\nabla (u_n - \theta))$
 which may be zero in the computation.
To avoid the division by zero in Algorithm \ref{algo: w/o dist}, 
we introduce an approximation of $F_\mu^k$ of the form
\begin{align*}
  F^k_{\mu,\alpha} (w,d;g) 
 := & \int_W \gamma (d - \nabla \theta) dx
 - \int_W fw dx 
 + \frac{1}{2h} \left\| \frac{w - g}{ \sqrt{ \max \{ \psi, \alpha \}}} \right\|_{L^2}^2 
 \\
 & \qquad 
 + \frac{\mu}{2} \| d - \nabla w - b^k \|_{L^2}^2
\end{align*}
with a positive constant $\alpha \ll 1$.
In summary,
we check if
\begin{equation*}
 \left\{
 \begin{aligned}
 \mbox{(for Inner)} \quad & | F^k_{\mu,\alpha} (w^k_{\ell+1},d^k_{\ell+1}; u_n) 
 - F^k_{\mu,\alpha} (w^k_\ell, d^k_\ell; u_n)|
 < \varepsilon_{in}, \\
 \mbox{(for Outer)} \quad & | F^k_{\mu,\alpha} (w^{k+1},d^{k+1}; u_n) 
 - F^k_{\mu,\alpha} (w^k, d^k; u_n)|
 < \varepsilon_{out}  
 \end{aligned}
 \right.
\end{equation*}
for some $\varepsilon_{in}, \varepsilon_{out} \ll 1$
to break the Inner and Outer loop.

On the discretization, we apply the finite difference method.
Note that $\nabla \theta$ should be considered
as a smooth branch of it around the point $x_{i,j}$
at where we discretize the equations or functionals.
See \cite{OTG:2015JSC} for the details to avoid the
discontinuity of $\theta$ in the computation.
The equation \eqref{inner loop eq1}--\eqref{inner loop bc1} can be
solved easily, e.g., by the SOR method.

\subsection{Discretization of the partial derivatives on a grid}
\label{sec: discretization}

We now summarize the discretization 
of $\psi = \gamma(\nabla (u_n - \theta))$
in \eqref{inner loop eq1} for Algorithm~\ref{algo: w/o dist}.
In fact, one can find that
\eqref{inner loop eq1} implies 
$w = g$ when $\gamma (\nabla (g - \theta)) = 0$.
This fact may cause unexpected motions.
For example, 
a small closed curve does not vanish even though 
it should be shrunk because of the curvature.
Such an unexpected motion often appears
when we approximate $\nabla (g - \theta)$ by
the usual central difference.
To avoid such a situation, we introduce an
upwind difference for $\gamma (\nabla (g - \theta))$
as follows.

For the simplicity of notations, we 
set $G_{i,j} = g(x_{i,j}) - \theta (x_{i,j})$.
We present two upwind difference formulas;
the first one is of the form
\begin{align*}
 \hat{\partial}_x^\pm G_{i,j} 
 & = \pm \left\{ \frac{G_{i \pm 1, j} - G_{i,j}}{\Delta x} \right. \\
 & \qquad \left. - \frac{\Delta x}{2} \sigma 
 \left(
 \frac{G_{i \pm 2, j} - 2 G_{i \pm 1, j}
 + G_{i,j}}{\Delta x^2}, 
 \frac{G_{i +1, j} - 2 G_{i, j}
 + G_{i-1,j}}{\Delta x^2}
 \right) \right\}, 
\end{align*}
where
\[
 \sigma (p,q) =
 \left\{
 \begin{array}{ll}
  p & \mbox{if} \ |p| < q, \\
  q & \mbox{otherwise}.
 \end{array}
 \right.
\]
The second one is just a usual difference in the form
\[
 \tilde{\partial}_x^\pm U
 = \pm \frac{G_{i \pm 1, j} - G_{i,j}}{\Delta x}.
\]
Then, we define
\[
 \partial^\pm_x G_{i,j}
 = \left\{
 \begin{array}{ll}
  \hat{\partial}^\pm_x G_{i,j} & \mbox{if} \ x_{i \pm 1, j} \in W
   \ \mbox{and} \ x_{i,j \pm 1} \in W, \\
  \tilde{\partial}^\pm_x G_{i,j} & \mbox{otherwise}.
 \end{array}
 \right.
\]
Note that $G_{i \pm 1,j}$ in $\tilde{\partial}^\pm_x G_{i,j}$,
(resp. $G_{i \pm 2, j}$ in $\hat{\partial}^\pm_x G_{i,j}$)
is calculated by extension of $G_{i,j}$ with the Neumann boundary condition
when $x_{i \pm 1,j} \notin W$ 
(resp. $x_{i \pm 2, j} \notin W$).
We also define $\partial^\pm_y G_{i,j}$
with the similar manner of the above.
Note that we use $\tilde{\partial}_x^\pm G_{i,j}$
in this paper
even when $x_{i, j \pm 1} \notin W$.
We use the same order of the
approximation of $\partial_x (g - \theta)$ and $\partial_y (g - \theta)$.

We now introduce the approximation formula of
$\psi = \gamma (\nabla (g - \theta))$.
For given $q = (q^1, q^2) \in \mathbb{R}^2 \setminus \{ 0 \}$,
let us set
\begin{equation}
 \label{discretization of eikonal term}
 (q \cdot \nabla (g - \theta))_{i,j}
 = 
 \left\{
 \begin{array}{ll}
  q^1 \partial_x^+ G_{i,j} 
   + q^2 \partial_y^+ G_{i,j} 
   & \mbox{if} \ q^1 \ge 0, \ q^2 \ge 0, \\ [4pt]
  q^1 \partial_x^- G_{i,j} 
   + q^2 \partial_y^+ G_{i,j} 
   & \mbox{if} \ q^1 < 0, \ q^2 \ge 0, \\ [4pt]
  q^1 \partial_x^+ G_{i,j} 
   + q^2 \partial_y^- G_{i,j} 
   & \mbox{if} \ q^1 \ge 0, \ q^2 < 0, \\ [4pt]
  q^1 \partial_x^- G_{i,j} 
   + q^2 \partial_y^- G_{i,j} 
   & \mbox{if} \ q^1 < 0, \ q^2 < 0.
 \end{array}
 \right.
\end{equation}
Then, for $\gamma (p) = \max_{0 \le k \le N_\gamma - 1} n_k \cdot p$,
we introduce the approximation 
$\psi_{i,j} \approx \gamma (\nabla (g - \theta) (x_{i,j}))$
in \eqref{inner loop eq1} for Algorithm~\ref{algo: w/o dist}
as
\[
 \psi_{i,j}
 = \max_{0 \le k \le N_\gamma - 1} (n_k \cdot \nabla (g - \theta))_{i,j}.
\]
Hence, we obtain the difference equation for \eqref{inner loop eq1},
which is of the form
\begin{align*}
 w_{i,j} 
 - h \mu \psi_{i,j} 
 \Delta w_{i,j}
 = g_{i,j} + h \psi_{i,j} (f_{i,j} 
 - \mu [\mathrm{div} (d^k_\ell - b^k)]_{i,j}),  
\end{align*}
where $\Delta w_{i,j}$ is the usual finite difference
approximation of $\Delta w$ at $x_{i,j}$.


\section{Numerical results}
\label{Numerics}

We now apply the algorithm introduced
in the previous sections to the evolution of
spirals by \eqref{geo mcf},
and propose some numerical results.

Throughout this section, we set the domain
$\Omega = [-1.5, 1.5]^2$ with the Cartesian grid
\[
 D_\triangle = \{ x_{i,j} = (i \Delta x, j \Delta x) \in \Omega ; \ 
 -1.5/\Delta x \le i,j \le 1.5/\Delta x \}
\]
for a uniform grid spacing $\Delta x$.
We 
will set $\Delta x = 0.02/s$ with $s=1,2,3,4,5$
to examine the numerical accuracy of our algorithms
in \S \ref{sec: unit spiral}, and
often visualize the spiral profiles for $s=3$.

We will further assume that the centers of the spirals lie on the grid nodes: 
$a_\ell \in D_\triangle$ for every $\ell=1, 2, \ldots, N$.
As in the definition of $W$, we will exclude the grid nodes
$x_{i,j}$ 
\[
 |x_{i,j} - a_\ell| < r = {2 \Delta x} 
\]
to avoid the complexity of computations around the centers 
$a_\ell$, $\ell=1, 2, \ldots, N$.

The evolution equation \eqref{geo mcf} in this section 
is rephrased as
\begin{equation}
 \label{BCF eq}
  V_\gamma = v_\infty (1 - \rho_c \kappa_\gamma)
\end{equation}
for consistency with \cite{OTG:2015JSC, OTG:2018CGD} 
or \cite{Burton:1951tr},
where $v_\infty$ and $\rho_c$ are positive constants.
Here, we consider only a constant driving force.
By considering $\tilde{f} = v_\infty$ and 
$\tilde{\gamma} = v_\infty \rho_c \gamma$
instead of $f$ and $\gamma$ in \eqref{geo mcf},
we can apply the algorithms as in the previous sections
to \eqref{BCF eq}.

\subsection{Unit spiral}
\label{sec: unit spiral}

In this section
we consider the fundamental case, i.e., 
a single center with a single spiral,
i.e., 
let $N=1$, $a_1 = 0$, $m_1 = 1$, and thus
we set $\theta (x) = \arg x$.
We choose the size of the center as $r = 2 \Delta x$.
We present numerical studies for the case corresponding to a triangular spiral; 
this means $N_\gamma = 3$ and
\begin{equation}
 \label{triangle setting}
 n_j = 2 \left(
	  \cos \frac{\pi (2j+1)}{3},
	  \sin \frac{\pi (2j+1)}{3} \right),    
 \quad \mbox{for} \ j=0,1,2.
\end{equation}
Note that, in this case, $\gamma^\circ (p) = \max_{0 \le j \le 2} {\tilde{n}_j} \cdot p$
with 
\[
 \tilde{n}_j = \left( \cos (2 \pi j / 3), \sin (2 \pi j / 3) \right).
\]
(See \cite{IO:DCDS-S} for details of
calculation $\gamma^\circ$ from $\gamma$.)
%
We examine the proposed algorithm
for 
\[
 V_\gamma = 1 - 0.01 \kappa_\gamma
 \quad (v_\infty = 1, \ \rho_c = 0.01),
\] 
the time interval is $[0,0.8]$ with 
time span $h = 0.04 \Delta x$.
The other parameters for the presented computational results are
$\mu$, $\varepsilon_{in}$,
$\varepsilon_{out}$, $\alpha$ as follows:
\[
 \mu = v_\infty \rho_c, \quad 
 \varepsilon_{in} = 10^{-2} v_\infty \rho_c, \quad
 \varepsilon_{out} = 10^{-5} v_\infty \rho_c, \quad
 \alpha = 10^{-8}.
\]

We compare the numerical results computed by our algorithm
with those from the front-tracking model introduced in~\cite{IO:DCDS-B}.
A brief review of the front-tracking model is provided in the Appendix.

We will use two functions to quantify the difference 
between the spirals computed by our algorithm 
and those from the front-tracking approach.
We denote the spiral computed by 
Algorithm~\ref{algo: w/o dist} as 
$\Sigma_h (t)$ and the one computed by the front-tracking method 
as $\Sigma_\textup{d} (t)$. 
%
%

The first one, $\mathcal{D} (t)$ is based on the distance function to $\Sigma_h$.
Let $d_{\Sigma_h} (t,x)$ be a distance function 
from $\Sigma_h (t)$ with the usual Euclidean metric.
Then, 
\begin{equation}
 \label{def:D}
 \mathcal{D} (t) 
 = \sup \{ d_{\Sigma_h} (t,x) ; \ x \in \Sigma_{\textup{d}} (t) \}    
\end{equation}
expresses the 
distance between $\Sigma_h (t)$
and $\Sigma_\textup{d} (t)$.

The second one,
denoted by $\mathcal{A} (t)$ (and defined in \eqref{def:A} below), 
measures the area of interposed region by $\Sigma_{\textup{d}} (t)$ and
$\Sigma_h (t)$. 
This concept is introduced in \cite{IO:DCDS-S}.
The function $\mathcal{A}$ involves
the height function of the crystal surfaces.
Let $H_{\Sigma_h}(t,x)$ and 
$H_{\Sigma_\textup{d}} (t,x)$ denote the height functions 
corresponding to $\Sigma_h(t)$ and $\Sigma_{\textup{d}} (t)$ 
(as defined in \cite{IO:DCDS-S}).

For $\Sigma_h (t)$ given by
\eqref{discrete lv form}, we prepare the principal value 
$\hat{\theta} (x) = \arg x \in [-\pi, \pi)$, and let $\zeta (t,x) \in \mathbb{Z}$ 
be such that 
\[
 2 \pi \zeta (t,x) < u_n (x) - \hat{\theta} (x) \le 2 \pi (\zeta (t,x) + 1)
\]
for $(t,x) \in [0,\infty) \times \overline{W}$.
Then, 
\[
 \theta_h (t,x) 
 = \hat{\theta} (x) + 2 \pi \zeta(t,x)
\]
yields the branch of $\arg x$ whose discontinuities are only on $\Sigma_h (t)$.

For $H_{\Sigma_\textup{d}} (t)$, let
\[
 {\bf N}_j = \frac{\tilde{n}_j}{|\tilde{n}_j|}, \quad
 {\bf T}_j = 
 \begin{pmatrix}
  1 & 0 \\
  0 & -1
 \end{pmatrix}
 {\bf N}_j
 \quad \mbox{for} \ j \in \mathbb{Z} / (N_\gamma \mathbb{Z})
\]
denotes the outer unit normal and
tangential vector of $j$-th facet of $\mathcal{W}_\gamma$
with the extended facet number $j \in \mathbb{Z} / (N_\gamma \mathbb{Z})$.
We now consider a polygonal curve
$\Sigma_{\textup{d}} (t) = \bigcup_{j=0}^k \Sigma_{\textup{d},j} (t)$ 
given by
\[
 \Sigma_{\textup{d},j} (t)
 = \left\{
 \begin{aligned}
  \ & \{ (1-\sigma) y_j (t) + \sigma y_{j-1} (t) ; \ \sigma \in [0,1] \}
  \quad \mbox{if} \ j=1,2,\ldots, k, \\
  & \{ y_0 (t) + \sigma {\bf T}_0 ; \ \sigma \ge 0 \}
  \quad \mbox{if} \ j=0
 \end{aligned}
 \right.
\] 
with its vertices $\{ y_j (t) ; \ j = 0, \ldots, k \}$.
Note that
\[
 \frac{y_{j-1} (t) - y_j (t)}{|y_{j-1} (t) - y_j (t)|} =  {\bf T}_j
\]
and then all facets of $\Sigma_{\textup{d}} (t)$ are parallel to those
of $\mathcal{W}_\gamma$.
(See \eqref{discrete facets}--\eqref{discrete origin} and Appendix 
for the detail how to determine $y_j (t)$.)
Let $\tilde{\vartheta}^* = \tilde{\vartheta}_k - \pi/2$, i.e.,
$(\cos \tilde{\vartheta}^*, \sin \tilde{\vartheta}^*) = {\bf T}_k$.
We first prepare a branch of $\arg x$ denoted by 
$\Theta_{\textup{d}} (t,x) \in (\tilde{\vartheta}^* - 2 \pi, \tilde{\vartheta}^*]$
whose discontinuity is only on 
a half line including $\Sigma_{\textup{d}, k} (t)$.
Let 
\[
 D_j (t) = \{ x \in \mathbb{R}^2 ; \ (x-y_j) \cdot {\bf N}_j > 0 \}
\]
for $j=0, \ldots, k$, which denotes the region including the terrace
in front of $\Sigma_{\textup{d},j} (t)$.
Then, 
\[
 \theta_{\textup{d}} (t,x) 
 = \Theta_{\textup{d}} (t,x) - 2 \pi \sum_{j=1}^k \chi (x; D_j^c (t) \cap D_{j-1} (t))
\]
yields the branch of $\arg x$ whose discontinuities are only on $\Sigma_{\textup{d}} (t)$,
where 
\[
 \chi(x;U) = \left\{
 \begin{array}{ll}
  1 & \mbox{if} \ x \in U, \\ 
  0 & \mbox{otherwise} \\
 \end{array}
 \right.
\]
for $U \subset \mathbb{R}^2$. Hence, 
\[
 H_{\Sigma_\textup{d}} (t,x) = \frac{1}{2 \pi} \theta_{\textup{d}} (t,x), 
 \quad 
 {H_{\Sigma_h} (t,x) = \frac{1}{2 \pi} \theta_h (t,x)}
\]
yield 
the surface height functions of $\Sigma_{\textup{d}} (t)$
(resp. $\Sigma_h (t)$) which have 1-jump discontinuity 
on $\Sigma_{\textup{d}} (t)$ (resp. $\Sigma_h (t)$).
Figure~\ref{ode-step sample} is an example of $H_{\Sigma_d}$.
\begin{figure}[htbp]
 \begin{center}
  \includegraphics[scale=0.4]{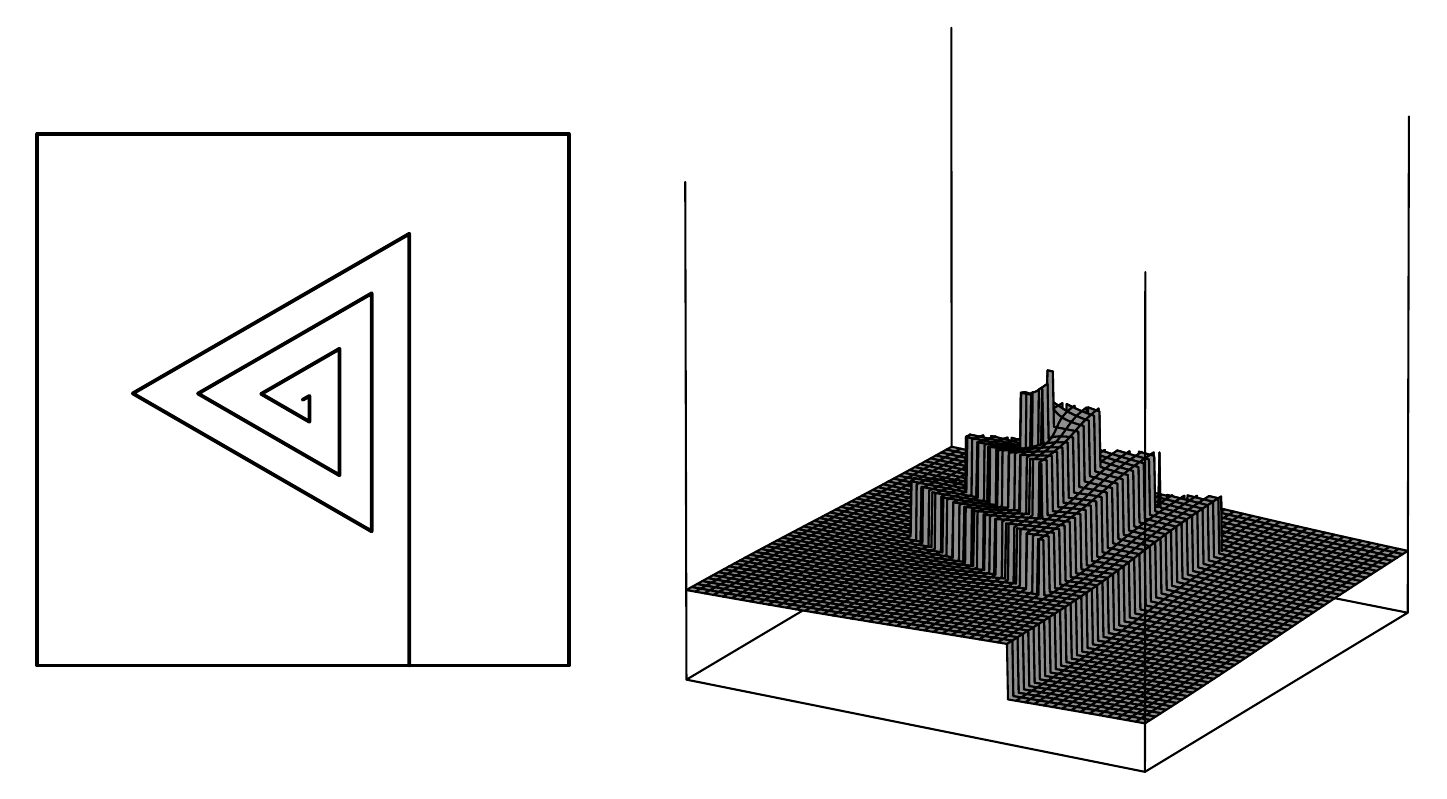}
  \caption{Spiral step function $H_{\Sigma_{\textup{d}}}$.}
  \label{ode-step sample}
 \end{center}
\end{figure}

%
%
%
%
%
%
Now, set up the initial data 
so that $H_{\Sigma_d} = H_{\Sigma_h} $ at $t=0$.
Then, the function $|H_{\Sigma\textup{d}} (t,x) - H_{\Sigma_h} (t,x)|$
plays the role of a characteristic function of the interposed region
between $\Sigma_{\textup{d}} (t)$ and $\Sigma_h (t)$
with multiplicity.
(See Figure~\ref{ode-lv diff sample} for an example
of the graph of $|H_{\Sigma_{\textup{d}}} (t,x) - H_{\Sigma_h} (t,x)|$.)
Hence, we define 
\begin{equation}
 \label{def:A}
 {\mathcal{A} (t)}
 = \frac{1}{|W|} \int_W |H_{\Sigma_\textup{d}} (t,x) - H_{\Sigma_h} (t,x)| dx.
\end{equation}
This quantity indicates the difference between $\Sigma_h (t)$
and $\Sigma_{\textup{d}} (t)$ by area of the interposed region.
\begin{figure}[htbp]
 \begin{center}
  \includegraphics[scale=0.4]{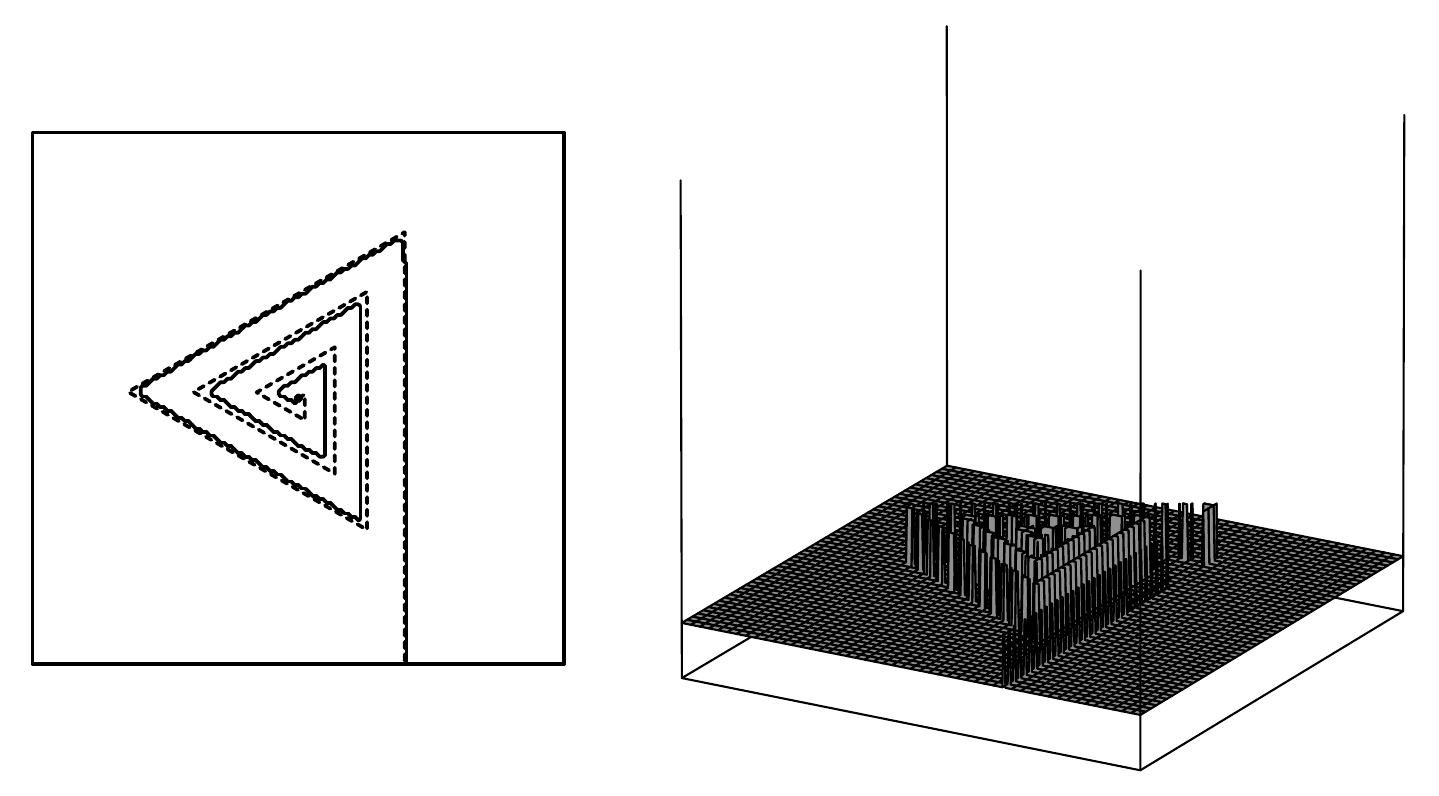}
  \caption{Graph of $|H_{\Sigma_{\textup{d}}} - H_{\Sigma_h}|$.
  In the left panel, the dashed and solid lines indicates
  $\Sigma_{\textup{d}} (t)$ and $\Sigma_h (t)$, respectively.}
  \label{ode-lv diff sample}
 \end{center}
\end{figure}


We take the opportunity to provide some
additional mathematical exposition on $\mathcal{A} (t)$.
Following \cite{Ohtsuka:2003wi}, we introduce a covering space $\mathfrak{X}$
of the form
\begin{align*}
 \mathfrak{X}
 = \left\{ (x, \xi) \in \overline{W} \times \mathbb{R}^N 
 \left| \
 \begin{aligned}
  & \xi = (\xi_1, \ldots, \xi_N) \ \mbox{satisfies} \\
  & (\cos \xi_j, \sin \xi_j) = \mathrm{arg} \frac{x-a_j}{|x-a_j|}
  \ \mbox{for} \ j = 1,2, \ldots, N 
 \end{aligned}
 \right.
 \right\},
\end{align*}
where $a_j, j=1,2,\cdots, N,$ are the centers of the spirals.
We then consider spiral curves
on the Riemannian-like surface
\[
 \mathfrak{W} = \left\{ (x, z) 
 \in \overline{W} \times \mathbb{R} \left| \
 z = \frac{1}{2 \pi} 
 \sum_{j=1}^N m_j \xi_j, \ (x, \xi_1, \ldots, \xi_N) \in \mathfrak{X} \right. \right\}.
\]
In other words, 
the height functions $H_{\Sigma_\textup{d}} (t,x)$ 
of $\Sigma_{\textup{d}} (t)$
divides $\mathfrak{W}$ into the inside
of $\Sigma_{\textup{d}} (t)$ of the form
\[
 \widetilde{I}_{\textup{d}} (t) 
 = \{ (x, \xi) \in \mathfrak{W} | \ \xi \le {H_{\Sigma_{\textup{d}}} (t,x)} \}, 
\]
and the outside as 
$\mathfrak{W} \setminus \widetilde{I}_{\textup{d}} (t)$.
We here define $\widetilde{I}_{h} (t)$,
the inside of $\Sigma_h (t)$
in the same manner with 
$\Sigma_h (t)$ and $H_{\Sigma_h} (t,x)$.
Then, \emph{the interposed region between $\Sigma_{\textup{d}} (t)$
and
$\Sigma_h (t)$} indicates 
$\widetilde{I}_{h} (t)
\triangle \widetilde{I}_{\textup{d}} (t)
= (\widetilde{I}_{h} (t) 
\setminus \widetilde{I}_{\textup{d}} (t))
\cup (\widetilde{I}_{\textup{d}} (t) 
\setminus \widetilde{I}_{h} (t))$.
The quantity $\mathcal{A} (t)$ computes
the 
area of $\widetilde{I}_{h} (t) 
\triangle \widetilde{I}_{\textup{d}} (t)$
in the sense of the usual Lebesgue measure in $\mathbb{R}^2$.
Moreover, note that 
$\mathcal{A} (t)$ includes the multiplicity of the interposed region.
In fact, 
let us consider the situation
$W = \{ x \in \mathbb{R}^2 | \ \rho \le |x| \le R \}$
for some $R > \rho > 0$, $\theta (x) = \arg x$,
$\Sigma_{\textup{d}} (t) = \Sigma_h (t)
= \{ x=(x_1, 0) \in \overline{W} | \ x_1 > 0 \}$,
but
$\widetilde{I}_{\textup{d}} 
= \{ (x, \xi) \in \mathfrak{W} | \ \xi \le 0 \}$,
$\widetilde{I}_{h} 
= \{ (x, \xi) \in \mathfrak{W} | \ \xi \le 2 \}$.
\emph{In this case $\mathcal{D} (t) = 0$.
At first glance, it might seem that the area of
$\{ x \in \overline{W} | \ (x,\xi) \in 
\widetilde{I}_{h} (t) 
\triangle \widetilde{I}_{\textup{d}} (t) \}$
is zero, but
$\mathcal{A} (t) = 2$ (twice of $|W|$).}

We provide several numerical results 
verifying the accuracy of our scheme.
In the following examinations, we set up the initial 
curve as
\[
 \Sigma_{\textup{d}} (0)
 = \Sigma_h (0)
 = \{ \sigma {\bf T}_0 ; \ \sigma \in [0,\infty) \}.
\]
Accordingly, we set the initial data as
\begin{align*}
 \mbox{The front-tracking model}: \ & k=1, \ d_1 (0) = 0. \\
 \mbox{Algorithm~\ref{algo: w/o dist}}: 
 \ & 
 u_0 = - \frac{\pi}{2}.
\end{align*}
In this paper,
the solution $\Sigma_{\textup{d}} (t)$ by the front-tracking model
is obtained by solving the ODE system
\eqref{crystalline ODE} with 4-th order Runge--Kutta method
and smaller time span $\Delta t = 10^{-6}$ than that of our approach.
Then, we compute the height function $H_{\Sigma_{\textup{d}}} (t,x)$
by the solution of front-tracking model.
Figure~\ref{profile: triangle} displays 
overlapped snapshots of the evolving spirals
computed using the front-tracking model 
and using Algorithm~\ref{algo: w/o dist}
at $t = 0, 0.4, 0.6$ and $0.8$, for the case \eqref{triangle setting}.
In Figure~\ref{graph: area-diffs} 
we show the graphs of $\mathcal{D} (t)$ 
and $\mathcal{A} (t)$ under the above settings.
One can find that the difference becomes smaller 
when we choose smaller $\Delta x$ (higher $s$).
However, both $\mathcal{D} (t)$ and $\mathcal{A} (t)$
increase with respect to $t$ for all cases.
From Figure~\ref{profile: triangle},
these differences arise and develop around the center
when the facet associated with the center turns around
the direction of their evolution.
On the other hand, 
one also finds that the graphs of 
$\mathcal{D} (t)$, in particular when $s=1$ or $2$,
seem to be concave even when the graphs of 
$\mathcal{A} (t)$ in the same cases are convex.
This seems to be the situation in which
the difference between the two simulations around the center becomes too large so that 
the step from the front-tracking model catches up 
with some other portion of the 
steps by Algorithm~\ref{algo: w/o dist}. 

\begin{figure}[htbp]
 \begin{center} 
  \includegraphics[scale=0.8]{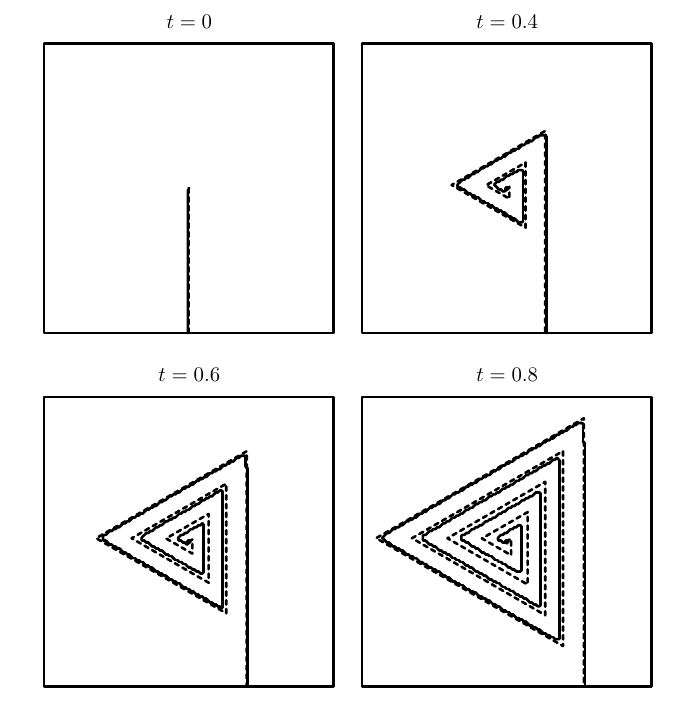}
  \caption{Overlapping snapshots of two spirals 
  at $t = 0, 0.4, 0.6$ and $0.8$, 
  computed using the front-tracking model (dashed curves) 
  and Algorithm~\ref{algo: w/o dist} (solid curves),
  with $\Delta x = 1/150$ ($s=3$). }
  \label{profile: triangle}
 \end{center}
\end{figure}

\begin{figure}[htbp]
 \begin{center}
  \includegraphics[scale=0.6]{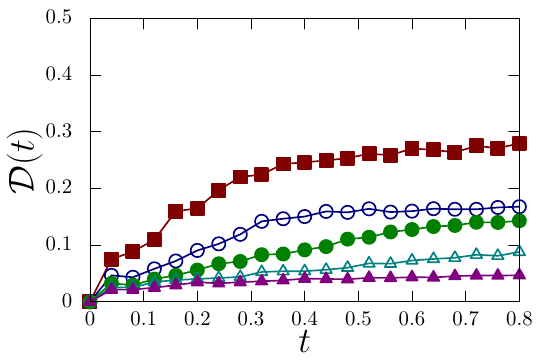} 
  \includegraphics[scale=0.6]{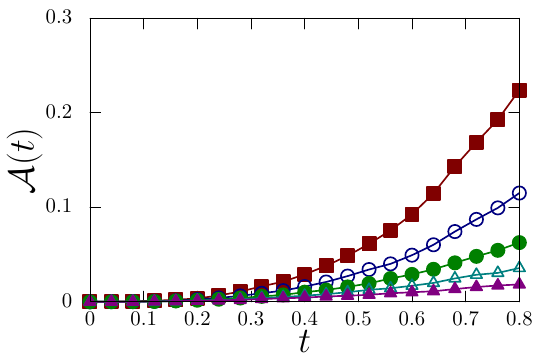} 
  \caption{Graphs of the distances $\mathcal{D} (t)$ and the area difference 
  $\mathcal{A} (t)$ 
  between the spirals computed using the front-tracking model
  and Algorithm~\ref{algo: w/o dist}.
  The lines with $\blacksquare$,
  $\circ$, $\bullet$, $\triangle$, $\blacktriangle$
  correspond the cases of 
  $s=1,2,3,4,5$, respectively. }
  \label{graph: area-diffs}
 \end{center}
\end{figure}

Next, we investigate the number of inner and outer loops 
as the computational cost of our approach.
Figure~\ref{num-outer: triangle}
presents the numbers of outer loops for each time step.
We find that the number of outer loops in a time step ranges from 20 to 80.
On the other hand, Figure~\ref{num-inner: triangle}
present the graphs of average numbers of inner loops par one outer loop
for each time step.
We find that the range of inner loops is from 1 to 1.5 
in all cases.
In particular, the  number of inner and outer loops 
seem to remain very similar for the different values of $\Delta x$ used in the study.



\begin{figure}[htbp]
 \begin{center}
  \includegraphics[scale=0.75]{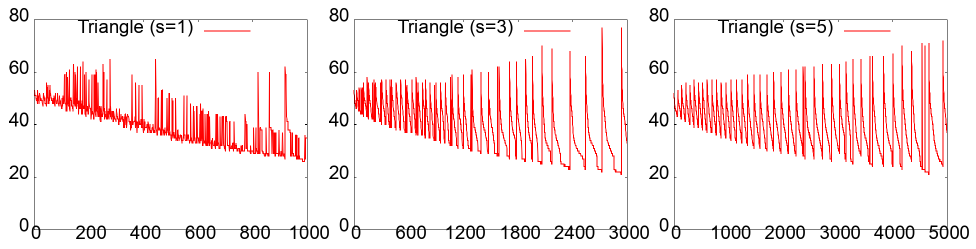}
  \caption{
  The number of outer loops used in the simulations involving three different values of $s$.
  The horizontal axis reveals the number of time steps performed in the evolution, 
  while the vertical axis shows the number of outer loops. 
  }\label{num-outer: triangle}
 \end{center}
\end{figure}



\begin{figure}[htbp]
 \begin{center}
  \includegraphics[scale=0.75]{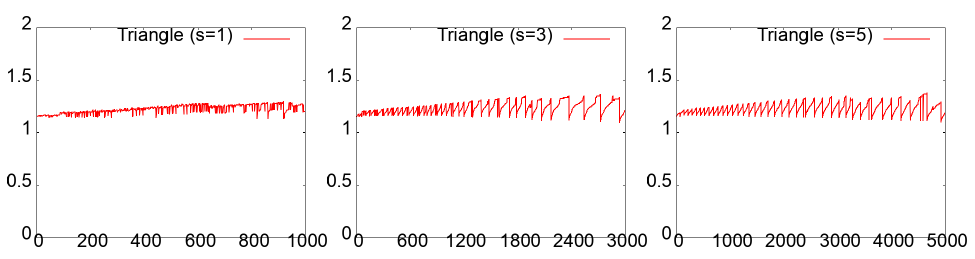}
  \caption{The number of inner loops  used in the simulations involving 
  three different values of $s$.
   The horizontal axis reveals the number of time steps performed in the evolution 
   and vertical axis shows the average number of inner loops per outer loop. 
  }
  \label{num-inner: triangle}
 \end{center}
\end{figure}

We here give some remarks on numerical computation.
For the level set equation \eqref{lv mcf}, the fully explicit finite difference
method is a simple approach to compute it;
see \cite{IO:DCDS-S} for details.
Our numerical results in this paper are close to those in \cite{IO:DCDS-S}.
It is well known that we often set the time span as $h=O(\Delta x^p)$.
For the level set equation for crystalline curvature,
we shall choose $p \ge 2$ for stable and highly accurate 
computation when we choose smaller $\Delta x$.
On the other hand, our approach can be set $h = O(\Delta x)$ ($p=1$),
which is an advantage for computation with smaller $\Delta x$.
On the other hand, our approach has a lot of numerical parameters
affecting to numerical accuracy.
In particular, $\mu$ affects to the profile of spirals and numerical costs.
In Figure~\ref{profile: triangle} one can find that the solution of 
our approach has smoothing effect in particular around the corner
of polygonal curve.
If one wants to obtain sharp corner in the solution,
it is one of options to set smaller $\mu$.
However, if we do so, the number of inner and outer loops may increase. 

We here also give some remarks on the comparison 
between front-tracking model and our approach.
For the computation of the evolution of a single spiral,
the front-tracking model is extremely fast to obtain the solution
$\Sigma_{\textup{d}} (t)$.
If we compute the height function $H_{\Sigma_{\textup{d}}} (t,x)$, 
almost all time of the computation is spent to compute 
$H_{\Sigma_{\textup{d}}} (t,x)$ from $\Sigma_{\textup{d}} (t)$.
Moreover, the front-tracking approach requires generating new facets,
so the ODE system \eqref{crystalline ODE} and thus the facet number $k$ 
becomes larger in time.
Thus, the computational time of the front-tracking model
for one time-step increases in time, 
although that of our approach seems to be constant.

\if0{
The area differences between front-tracking model in
\cite{IO:DCDS-S} are close to our results with 
the setting $h = 0.1 \times \Delta x^2$.
On the other hand, our approach can set $h = O(\Delta x)$.
It is a main advantage of our approach.
In fact, if we change $\Delta x$ into $\Delta x / 2$
with keeping the domain $W$, then numerical costs increase
as follows:
\begin{itemize}
 \item the explicit finite difference scheme: 16 times of the previous setting, 
 \item our approach: 8 times.
\end{itemize}

We next compare the numerical costs between front-tracking model and our approach.
In this paper, we compute the front-tracking model 
by 4-th order Runge-Kutta method with $10^6$ time steps, i.e., 
the time span is $\Delta t = 0.8 \times 10^{-6}$.
The computational times of front-tracking model and our approach
are as follows.
\begin{itemize}
 \item Front-tracking model: ($s=1$) 501.25 sec.,  ($s=2$) 1610.81 sec.,
 \item Our approach: ($s=1$) 622.40 sec., ($s=2$) 4608.52 sec.
\end{itemize}
Note that computation of front-tracking model includes 
the computation of the height function.
\blue{[TO: The above data (comparing front-tracking model and our approach)
are obtained by
computing them with MacBook Pro, Apple M1 Pro CPU, 32GB memory,
macOS Sequoia 15.2, gcc 14.2.0 with OpenMP.
If we do not compute the height function in front-tracking model,
then its computational time is less than 1 sec.
even if we do it by the laptop PC.
I would like to try again with Mac Studio (Apple M2 Max CPU and 96GB memory)
later.]}
}\fi

We conclude this subsection by presenting 
a numerical result of the case of a single center
with multiple spirals.
Figure~\ref{sq1-triple w/o profile}
presents the profiles of triple spirals
evolving by
\begin{align}
 \label{eq for triple}
 & V = 2 (1 - 0.04 \kappa_\gamma), \\
 \label{square setting}
 & \gamma (p) = |p^1| + |p^2|, \
 \gamma^\circ (p) = \max \{ |p^1|, |p^2| \}, \ p = (p^1, p^2) \in \mathbb{R}^2.
\end{align}
with $N=1$, $a_1 = 0$, $m_1 = 3$
and thus $\theta = 3 \arg x$.
In our earlier work \cite{Ohtsuka:OWR2017},
an approach using a signed distance 
function in a neighborhood of spirals is investigated.
However, it is difficult to apply that approach in this situation.

%
%
%

\begin{figure}[htbp]
 \begin{center}
  \includegraphics[scale=0.8]{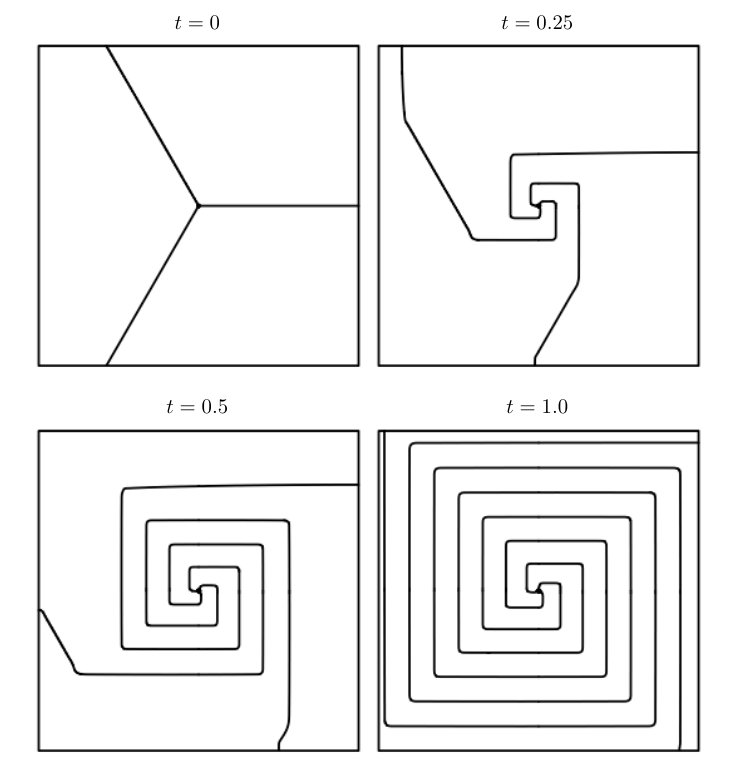}
  \caption{Profiles of triple spirals evolving by \eqref{eq for triple}
  and \eqref{square setting}
  from a single center
  at $t=0$, $t=0.25$, $t=0.5$ and $t=1$.}
  \label{sq1-triple w/o profile}
 \end{center}
\end{figure}

\begin{rem}
 According to the front-tracking model \cite{IO:DCDS-B}, 
 the facet associated with the center (center-facet)
 is stationary if it is shorter than
 the critical length. The critical length in question 
 is the length of the facet of $\mathcal{W}_\gamma$ 
 facing the same direction as the center facet.
 When the length of the center-facet reaches  
 the critical length, a new facet is generated  
 at the center, and newly generated facet becomes the center-facet.
 Our approach does not have such a generation rule.
 However, the behavior of the facet around the center
 by our algorithms is very similar to the generation rule outlined in \cite{IO:DCDS-B}.
 
 Figure~\ref{serial picture for center-facet}
 show a superposition of snapshots of a spiral 
 evolving with
 \begin{equation}
  \label{eq: check generation}
  V = 4(1 - 0.2 \kappa_\gamma),
 \end{equation}
 and \eqref{square setting} by Algorithm~\ref{algo: w/o dist}.
 %
 %
 The dashed line is $x = \pm (0.4 + r)$, $y = \pm (0.4 + r)$.
 The initial curve is $\{ (x,0) ; \ x > 0 \}$.

 We now consider the same situation with the front-tracking model.
 Then, the initial data is
 $\Sigma_{\textup{d}} (0) = \Sigma_{1,\textup{d}} (0) \cup \{ (x,0) ; \ x \ge 0 \}$,
 $\Sigma_{1, \textup{d}} (t) = \{ (0,y) ; \ 0 \le y \le d_1 (t) \}$
 and $d_1 (0) = 0$.
 For the evolution equation \eqref{eq: check generation}
 and \eqref{square setting}, 
 we find $\mathcal{W}_\gamma = [-0.2,0.2]^2$ in this case 
 so that the center-facet $\Sigma_{1,\textup{d}} (t)$ 
 can move after the time $t^* > 0$ when $d_1 (t^*) = 0.4$.
 In other words, 
 $\Sigma_{\textup{d}} (t)$ evolves as the following:
 \begin{enumerate}
  \item First, the horizontal line 
	$\{ (x,0) ; \ x \ge 0 \}$ moves to 
	the above direction 
    as $\{ (x, d_1 (t)) ; \ x \ge 0 \}$
	with making the vertical line
	$\Sigma_{1, \textup{d}} (t) = \{ (0, y) ; \ 0 \le y \le d_1 (t) \}$
	while $d_1 (t) < 0.4$.
  \item When $d_1 (t) = 0.4$, the new facet 
	$\Sigma_{2,\textup{d}} (t) = \{ (x,0) ; \ - d_2(t) \le x \le 0 \}$ 
	is generated.
	The center facet of $\Sigma_{\textup{d}} (t)$ is changed 
	from $\Sigma_{1,d}$ to $\Sigma_{2,d}$, and then
	$\Sigma_{1,d}$ can move as 
	$\Sigma_{1,d} = \{ (-d_2 (t), y); \ 0 \le y \le d_1 (t) \}$.
 \end{enumerate}
 The motion of $\Sigma_h (t)$ is similar to the above scenario.
 The vertical line in Figure~\ref{serial picture for center-facet}
 moves very slowly until the horizontal line reaches the dashed line
 ($y=0.4+r$).
 After the horizontal line goes over the dashed line, 
 the vertical line accelerates.
 This seems to be the reason why we observe that 
 $\Sigma_{\textup{d}} (t)$ and
 $\Sigma_h (t)$ are very close in numerical sense.
\end{rem}

\begin{figure}[htbp]
 \begin{center}
  \includegraphics[scale=0.5]{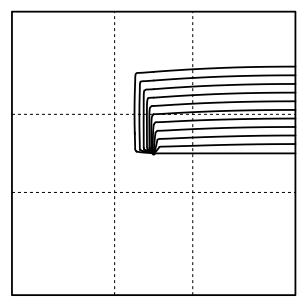}
  \caption{Serial picture of an evolving spiral
  from $t = 0$ to $t = 0.25$ per $\Delta t = 0.025$,
  whose center is the origin,
  by 
  \eqref{eq: check generation} and \eqref{square setting}.
  The dashed lines denote $x = \pm (0.4 + r)$ and 
  $y = \pm (0.4 + r)$.}
  \label{serial picture for center-facet}
 \end{center}
\end{figure}

\subsection{Merging of spirals}

Co-rotating and opposite rotational pairs are given by
setting
\[
 \theta (x) = m_1 \arg (x - a_1) + m_2 \arg (x-a_2);
\]
set $m_1 m_2 > 0$ for a co-rotating pair,
or $m_1 m_2 < 0$ for an opposite rotational pair.
Figure~\ref{profile:sq1-coro_noD} shows the profiles
of co-rotating spirals at $t=0$, $0.25$, 
$0.39$, $0.40$, $0.50$ and $1.0$
evolving by 
\begin{equation}
 \label{sq1-coro eq}
 V=2(1 - 0.1 \kappa_\gamma),
\end{equation}
with $\gamma$ defined in \eqref{square setting}.
The other parameters are as follows.
\begin{itemize}
 \item Centers: $a_1 = (-0.7,0)$, $a_2 = (0.7,0)$,
       $m_1 = m_2 = 1$.
 \item Initial curve: $\Sigma_0 = \{ a_1 + (-x,0) \in \overline{W} ; \ x>0 \}
       \cup \{ a_2 + (x,0) \in \overline{W} ; \ x > 0 \}$,
       and thus $u_0 (x) = 0$.
\end{itemize}
The panels of $t = 0.39$ and $0.40$ in Figure~\ref{profile:sq1-coro_noD}
shows the detailed profiles when the spirals merge.
Our approach can keep very narrow parallel facing steps 
just before merging, at $t = 039$.
It means that our approach has a good performance
to reduce unwanted smoothing effect.
Note that $\nabla (u - \theta) = 0$ at where 
facing steps merge.
Thus, one may find that an unexpected hole
(closed curve whose direction is inside of the curve)
remains in such a situation, 
in particular when we use the usual central difference
for the computation of $\psi = \gamma (\nabla (u_n - \theta))$
in Algorithm~\ref{algo: w/o dist}.
We use a higher-order upwind discretization 
\eqref{discretization of eikonal term} to avoid this 
unwanted effect. 

\begin{figure}[htbp]
 \begin{center}
  \includegraphics[scale=0.8]{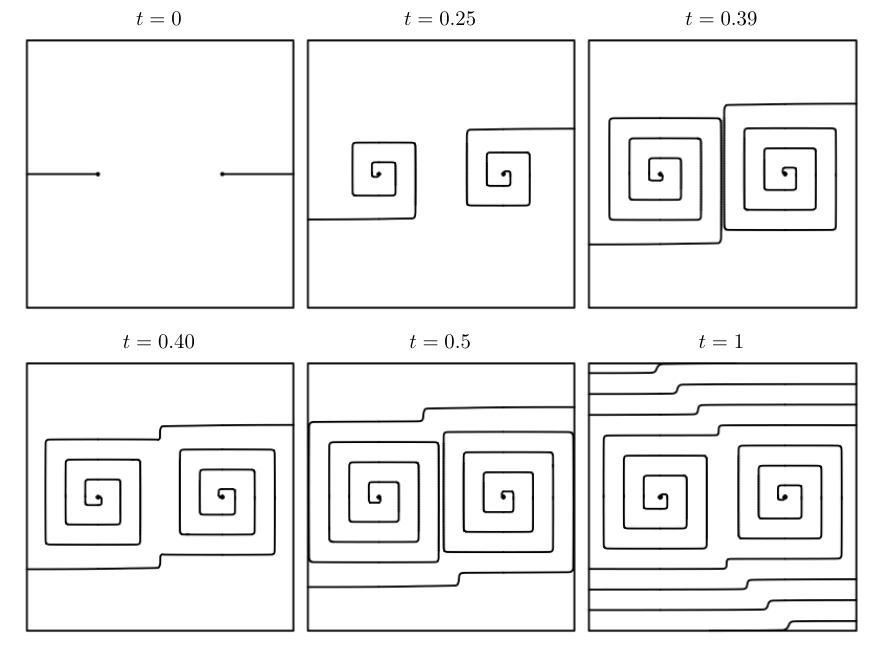}
  \caption{Profiles of co-rotating spirals at $t=0$, $0.25$, $0.5$ 
  and $1.0$ evolving by \eqref{sq1-coro eq} 
  with Algorithm~\ref{algo: w/o dist}.
  Note that $\Delta x = 1/150$($s=3$).}
  \label{profile:sq1-coro_noD}
 \end{center}
\end{figure}



In the works
\cite{AngenentGurtin:1989ARMA, Taylor:1991, Ishiwata:2014, IO:DCDS-B},
the solution is defined in a class of admissible polygonal curves,
which satisfy
\begin{itemize}
 \item there is no self-intersection, 
 \item if a facet of the curve has the same direction as the normal
       ${\bf n} = {\bf N}_j$ as the $j$-th facet
       of $\mathcal{W}_\gamma$,
       its adjacent facet has the direction of a normal
       either ${\bf n} = {\bf N}_{j-1}$ or ${\bf n} = {\bf N}_{j+1}$.
\end{itemize}
The second property of the above is required to define 
the crystalline curvature for $\mathcal{W}_\gamma$.
However, the merging of spirals makes the situation 
breaking the admissibility of the curve;
two adjacent facets which are parallel to the facets
of the Wulff shape $\mathcal{W}_\gamma$, but skipping
the order of the facets of $\mathcal{W}_\gamma$.
Figure~\ref{coro-pent} shows
profiles of co-rotating pentagonal spirals
at $t=0, 0.2, 0.4$, and $0.6$ evolving by
\begin{equation}
 \label{eq: coro-pent}
  \left\{
  \begin{aligned}
   & V_\gamma = 3(1 - 0.01 \kappa_\gamma), \\
   & \mbox{with} \quad 
   \gamma (p) = \max_{0 \le j \le 4} n_j \cdot p, \quad 
   n_j = \left( \cos \frac{\pi(2j+1)}{5}, \sin \frac{\pi (2j+1)}{5} \right).
  \end{aligned}
  \right.
\end{equation}
The two centers are at
$a_1 = (-1,0.4)$ and $a_2 = (1, -0.4)$
with $m_1 = m_2 = 1$.
Note that the above situation provides a regular pentagonal 
$\mathcal{W}_\gamma$ whose facet has the normal 
${\bf N}_j = \left( \cos (2 \pi j/5), \sin (2 \pi j/5) \right)$
for $j=0,1,2,3,4$.
We choose the initial curve as 
\begin{equation}
 \label{init: coro-pent}
 \Sigma_0 = \{ a_1 + \sigma {\bf T}_0 ; \ \sigma > 0 \}
 \cup \{ a_2 + {\sigma} {\bf T}_2 ; \ {\sigma} > 0 \}. 
\end{equation}
In this situation, a vertex of the curve provided from $a_2$ touches to 
the flat portion of the curve from $a_1$ between $t=0.2$ and $t=0.4$.
At that time, the curve is divided into two parts, which are the merging
curves between the facets parallel to ${\bf T}_0$ and ${\bf T}_2$,
or ${\bf T}_0$ and ${\bf T}_3$.
Hence, one can find that merging spirals
may break the admissibility of the curve.
In such cases, the ODE approach due to \cite{IO:DCDS-B} needs
a procedure adding several zero-length facets fulfilling
admissibility;
see \cite{Ochiai:thesis} or \cite{GGH:2006SIMA}.
Such an intermediate facet also seems to appear
in the evolving curve by our methods,
although it is too hard to find in Figure~\ref{coro-pent}.
To clarify the behavior around the merging of spirals,
we examine the situation 
\begin{equation}
 \label{eq: coro-pent2}
 \left\{
  \begin{aligned}
   & V=1 - 0.1 \kappa_\gamma \quad \mbox{with the same anisotropy
   of \eqref{eq: coro-pent}}, \\
   & a_1 = (-0.2, 1), \ a_2 = (0.2, -1), \ m_1 = m_2 = 1,
  \end{aligned}
 \right.
\end{equation}
Figure~\ref{coro-pent2} presents the profiles of 
the evolving curve by \eqref{eq: coro-pent2}
with the same initial curve as \eqref{init: coro-pent}.
One can find the intermediate facet parallel to ${\bf T}_1$
appears in the merging curves between the facets 
facets parallel to ${\bf T}_0$ and ${\bf T}_2$;
see the panel of $t=0.8$ or $t=1$ in Figure~\ref{coro-pent2}.

\begin{figure}[t]
 \begin{center}
  \includegraphics[scale=0.8]{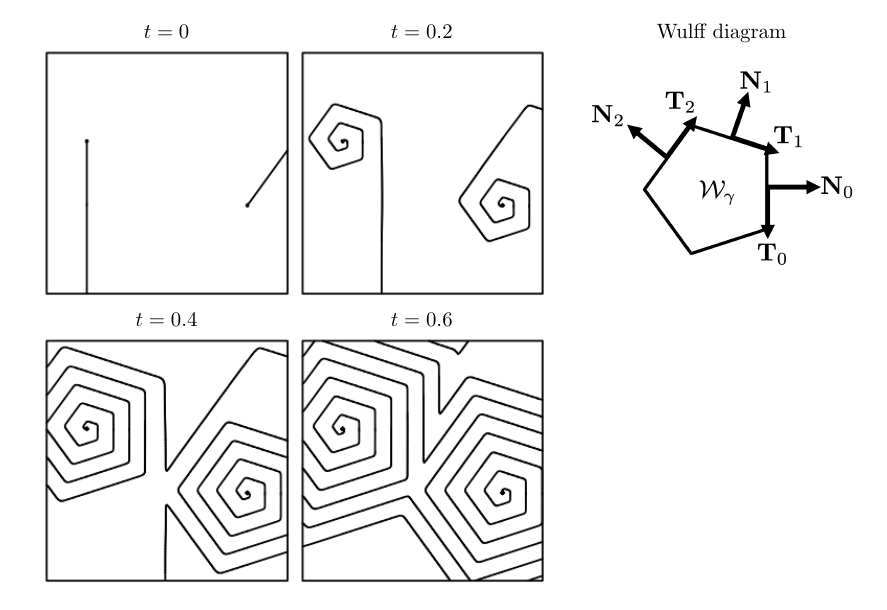}
  \caption{Profiles of co-rotating spirals
  evolving by \eqref{eq: coro-pent}
  at $t=0, 0.2, 0.4, 0.6$ with Algorithm~\ref{algo: w/o dist},
  and the Wulff diagram $\mathcal{W}_\gamma$ of this case.}
  \label{coro-pent}
 \end{center}
\end{figure}

\begin{figure}[t]
 \begin{center}
  \includegraphics[scale=0.8]{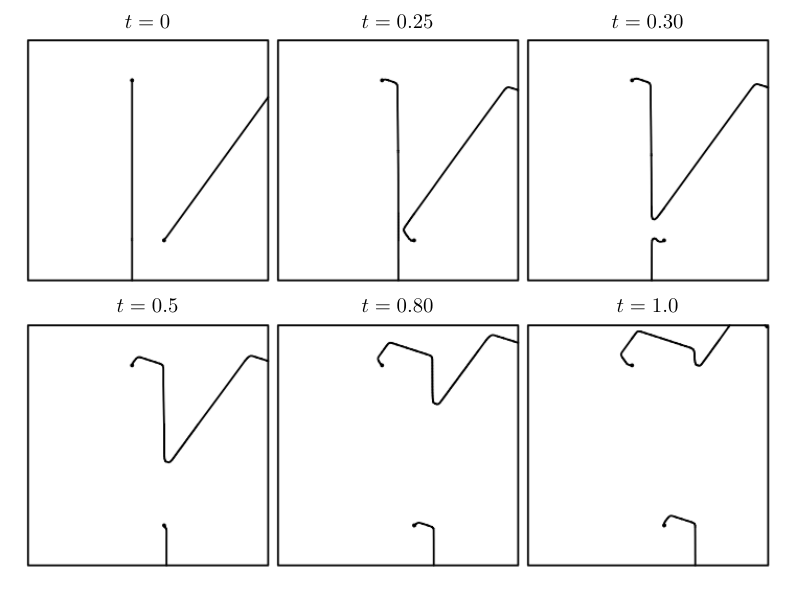}
  \caption{Profiles of merging spirals breaking admissibility.
  The settings of this situation is in \eqref{eq: coro-pent2},
  and $\Delta x = 0.00667$ ($s=3$).}
  \label{coro-pent2}
 \end{center}
\end{figure}

\subsection{Combination of anisotropy}

Let us consider the situation such that
the anisotropies of crystalline curvature and eikonal term,
which are denoted by $\gamma_1$ and $\gamma_2$,
are possibly different.
In other words, we now solve
the level set equation 
\eqref{lv mcf} of the form
\[
 u_t - \gamma_1 (\nabla (u - \theta))
 \left\{ \mathrm{div} [\xi_2 (\nabla (u - \theta))] + f \right\}
 = 0 \quad 
 \mbox{in} \ (0,T) \times W,
\]
where $\xi_2 = \nabla \gamma_2$. 
For such an equation, our approach can easily be adapted as follows;
 \begin{enumerate}
  \item Solve \eqref{inner loop eq1}--\eqref{inner loop bc1}
	with $\psi = \sqrt{\gamma_1 (\nabla (u_n - \theta))}$.
  \item Solve \eqref{polyhedral shrinkage problem}
	by the scheme (Sh) with $\gamma = \gamma_2$.
 \end{enumerate}
Figure~\ref{profile: sq-pent_noD} shows the profiles
of a spiral at $t=0$, $0.25$, $0.50$ and $1.0$ evolving by
\begin{equation}
 \label{eq: sq-pent}
  \left\{
   \begin{aligned}
    & V_1 =2 (1 - 0.01 \kappa_2), \\
    & \gamma_1 (p)
    = \max_{0 \le j \le 4} n_{1,j} \cdot p, \quad 
    n_{1,j} = \left( \cos \frac{\pi (2j+1)}{5}, \sin \frac{\pi (2j+1)}{5} \right), \\
    & \gamma_2 (p) 
    = \max_{0 \le j \le 3} n_{2,j} \cdot p, \quad
    n_{2,j} = \sqrt{2} \left( \cos \frac{\pi (2 j + 1)}{4}, 
    \sin \frac{\pi (2j + 1)}{4} \right)
    \quad (\mbox{case \eqref{square setting}})
   \end{aligned}
  \right. 
\end{equation}
with Algorithm~\ref{algo: w/o dist}, 
where $V_1$ and $\kappa_2$ denotes
the normal velocity defined by $\gamma_1$
and the crystalline curvature defined by $\gamma_2$,
respectively. The Wulff diagrams $\mathcal{W}_1$ by $\gamma_1$
and $\mathcal{W}_2$ by $\gamma_2$ are respectively
regular pentagon and square having facets whose direction normal
are as follows: 
\[
 \mathcal{W}_1 :
 {\bf N}_{1,j} = \left( \cos \frac{2 \pi j}{5}, \sin \frac{2 \pi j}{5} \right),
 \quad 
 \mathcal{W}_2 :
 {\bf N}_{2,j} = \left( \cos \frac{\pi j}{2}, \sin \frac{\pi j}{2} \right).
\]
Thus, the evolving spiral forms an octagonal polygonal curve
whose direction normals are chosen from 
$\{ {\bf N}_{1,j} ; \ 0 \le j \le 4 \}
\cup \{ {\bf N}_{2,j} ; \ 0 \le j \le 3 \} $.
Yazaki \cite{Yazaki:2001}
shows that a curve evolving by ${V} = - \kappa_\gamma + 1$
asymptotically 
converges to the solution of ${V} = 1$,
after the curve has moved 
sufficiently far away.
From the context of this result, the evolving spiral 
will show the profile as follows:
\begin{itemize}
 \item It is close to the rescaling of $\mathcal{W}_2$ around the center.
 \item It becomes convex polygonal curve whose direction normals are chosen
       from ${\bf N}_{1,j}$ or ${\bf N}_{2,j}$
       in the intermediate region.
 \item Asymptotically, it is 
       close to the rescaling of $\mathcal{W}_1$ on far away region.
\end{itemize}
\begin{figure}[htbp]
 \begin{center}
  \includegraphics[scale=1.0]{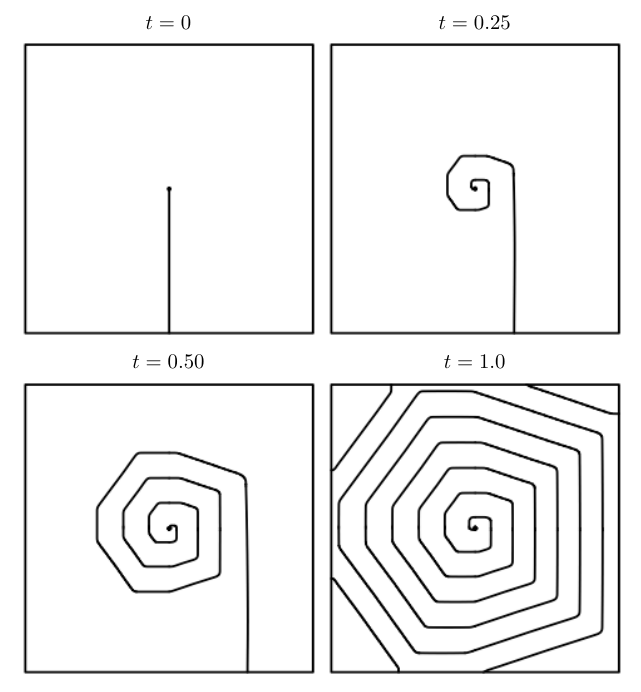}
  \caption{Profiles of a spiral
  at $t=0$, $0.25$, $0.50$ and $1.0$ 
  evolving by \eqref{eq: sq-pent} 
  with Algorithm~\ref{algo: w/o dist}
  and $\Delta x = 1/150$ ($s=3$).}
  \label{profile: sq-pent_noD}
 \end{center}
\end{figure}

\subsection{Interlace motion}
\label{sec: interlace}

Let us consider the situation such that
all centers have the same multiple numbers of spirals,
i.e., $m_j = \tilde{m}_j m_0$ with $\tilde{m}_j \in \mathbb{Z}$
and $m_0 \ge 2$ for $j=1, \ldots, N$.
Then, the level set description of spiral curves
\[
 \Sigma (t) = \{ x \in \overline{W} ; \ 
 u(t,x) - \theta (x) \equiv 0 \mod 2 \pi \mathbb{Z} \}
\]
is divided by $m_0$ components of continuous curves
$\Sigma_\ell (t)$ for $\ell = 0, 1, \ldots, m_0 - 1$
as 
\begin{align}
 \label{layer curve}
 \Sigma (t) & = \bigcup_{\ell = 0}^{m_0 - 1} \Sigma_\ell (t), \\
 \label{layer lv-form}
 \Sigma_\ell (t) & = 
 \{ x \in \overline{W} ; \ 
 u(t,x) - \theta (x) \equiv 2 \pi \ell \mod 2 \pi m_0 \mathbb{Z} \}.
\end{align}
In such case, we attempt to consider the situation
such that each $\Sigma_\ell (t)$ evolves its own evolution equation
\begin{equation}
 \label{layer eqs}
  V_{\gamma_\ell} = f_\ell - \kappa_{\gamma_\ell}
 \quad \mbox{on} \ \Sigma_{\ell} (t)
\end{equation}
by an anisotropic curvature $\kappa_{\gamma_\ell}$
with an anisotropic energy density $\gamma_\ell$
and a driving force $f_\ell$
for $\ell = 0, 1, \ldots, m_0 - 1$.
We call such an evolution interlace motion.
One can find such a situation in some real crystal growth experiments; see, 
for example, \cite{Verma:PhylosMag1951}.
Its growth mechanism is understood as in \cite{vanEnckevort:2004ActaCry}.

We now give a formal discussion to examine the interlace motion
with our algorithm. 
For 
simplicity of exposition,
we tentatively assume that
$\gamma_\ell$ are smooth so that the level set equation
\begin{align*}
 & 
 u_t 
 + F_\ell (\nabla (u - \theta), \nabla^2 (u - \theta))
 = 0
 \\
 & \mbox{where} \quad
 F_\ell (\nabla w, \nabla^2 w) = \gamma_\ell (\nabla w)
 \{ \mathrm{div} ( {\xi_\ell} 
 (\nabla w)) + f_\ell \}, \quad
 {\xi_\ell = \nabla \gamma_\ell}
\end{align*}
for \eqref{layer eqs} is well-defined for each $\ell = 0, 1, \ldots, m_0-1$.
%
%
Due to the idea by \cite{TsaiGiga:2003}, 
we now introduce a periodic cut-off function
$\lambda \colon \mathbb{R} / (2 \pi m_0 \mathbb{Z}) \to \mathbb{R}$;
define 
\[
 \lambda (\sigma) = 
 \max \left\{ 0, \min \left\{ 1, \frac{\pi - |\sigma|}{\varepsilon} + \frac{1}{2} \right\} \right\} 
\]
for a constant $0 < \varepsilon \ll 1$.
Note that $\lambda$ satisfies
\begin{align*}
 & \lambda (\sigma) = \left\{
 \begin{array}{ll}
  1 & \mbox{if} \ \sigma \in [-\pi + \varepsilon/2, \pi - \varepsilon/2], \\
  0 & \mbox{if} \ \sigma \in [-m_0 \pi, m_0 \pi] \setminus 
   (-\pi - \varepsilon/2, \pi + \varepsilon/2),
 \end{array}
 \right. \\
 & \sum_{\ell = 0}^{m_0-1} \lambda (\sigma - 2 \pi \ell) = 1.
\end{align*}
Then, formally, we find that
the motion by \eqref{layer eqs}
will be formulated 
with the solution $u(t,x)$ to  
\begin{align}
 & 
 u_t 
 + \lambda (u - \theta) F_0 (\nabla (u - \theta), \nabla^2 (u - \theta))
 \nonumber \\
 & \quad 
 + \lambda (u - \theta - 2 \pi) F_1 (\nabla (u - \theta), \nabla^2 (u - \theta))
 \nonumber \\
 & \qquad
 + \cdots 
 + \lambda (u - \theta - 2 \pi (m_0 - 1)) 
 F_{m_0 - 1} (\nabla (u - \theta), \nabla^2 (u - \theta))
 = 0
 \label{layer lv eq}
\end{align}
in $(0,T) \times W$.
Now, we consider the implicit difference scheme 
of \eqref{layer lv eq} only on time variable.
Then, we obtain
\begin{align*}
 & u(t+h) \\
 & = u(t) + 
 h 
 \sum_{\ell = 0}^{m_0 - 1}
 \lambda (u (t+h) - \theta - 2 \pi \ell) 
 F_\ell (\nabla (u (t+h) - \theta), \nabla^2 (u (t+h) - \theta)).
\end{align*}
According to \S \ref{sec: w/o dist},
the minimizer $w^*_\ell$ of
\begin{equation*}
 w \mapsto 
 \int_W \gamma_\ell (\nabla (w - \theta)) dx
 - \int_W f_\ell w dx 
 + \frac{1}{2h}
 \left\| \frac{w - u(t)}{\sqrt{\gamma_\ell (\nabla (u(t) - \theta ))}}
 \right\|_{L^2}^2
\end{equation*}
satisfies
\begin{align*}
 w^*_\ell & = u(t) + h \gamma_\ell (\nabla (u(t) - \theta))
 \left\{ 
 \mathrm{div} \{ 
 {\xi_\ell} (\nabla (u(t+h) - \theta) ) \} 
 + f_\ell \right\} \\
 & \approx u(t) + h F_\ell (\nabla (u(t+h) - \theta), \nabla^2 (u(t+h) - \theta)).
\end{align*}
Hence, we observe that
\begin{align*}
 w^* & = \sum_{\ell = 0}^{m_0-1}
 \lambda (u(t) - \theta - 2 \pi \ell) w^*_\ell \\
 & \approx 
 u(t) + h 
 \sum_{\ell=0}^{m_0 - 1}
 \lambda (u (t) - \theta - 2 \pi \ell)
 F_\ell (\nabla (u(t+h) - \theta), \nabla^2 (u(t+h) - \theta)) 
\end{align*}
gives a finite difference scheme of
\eqref{layer lv eq}.
Thus, we define an algorithm of 
the interlace motion by \eqref{layer curve}, \eqref{layer lv-form}
and \eqref{layer eqs} as follows.
\begin{enumerate}
 \if0{
 \item For given 
       initial curve $\Sigma_0 = \bigcup_{\ell = 0}^{m_0 - 1} \Sigma_{0,\ell}$,
       we now set up an initial data $u_0 \in C(\overline{W})$
       satisfying
       \[
	\Sigma_{0, \ell} = \{ x \in \overline{W} ; \ 
        u_0 (x) - \theta (x) \equiv 2 \pi \ell \mod {2 \pi m_0 \mathbb{Z}} \},
       \quad
        {\bf n} = - \frac{\nabla (u_0 - \theta)}{|\nabla (u_0 - \theta)|}.
       \]
       }\fi
 \item \label{layer min-move}
       For given $\Sigma_n$ ($n \ge 0$) and
       $u_n \in C(\overline{W})$ satisfying
       \begin{align*}
	\Sigma_n & = \{ x \in \overline{W} ; \ 
        u_n (x) - \theta (x) \equiv 0 \mod 2 \pi \mathbb{Z} \} 
	\quad {\bf n} = - \frac{\nabla (u_n - \theta)}{|\nabla (u_n - \theta)|}, \\
	\Sigma_{n, \ell}
	& = \{ x \in \overline{W} ; \ 
        u_n (x) - \theta (x) \equiv 2 \pi \ell \mod 2 \pi m_0 \mathbb{Z}
	\} \quad \mbox{for} \ \ell = 0, \ldots, m_0 - 1,
       \end{align*}
       find the minimizer $w^*_\ell \in L^2 (W) \cup BV(W)$ of
       \begin{align*}
	w \mapsto & \int_W \gamma_\ell (\nabla (w - \theta)) dx
	- \int_W f_\ell w dx 
	+ \frac{1}{2h}
	\left\| \frac{w - u_n}{\sqrt{\max \{ \psi_\ell, \alpha \} }}
	\right\|_{L^2}^2
       \end{align*}
       with $\psi_{\ell} = \gamma_\ell (\nabla (u_n - \theta))$
       for every $\ell = 0, 1, \ldots, m_0 - 1$.
 \item Set 
       \[
	u_{n+1} = \sum_{\ell=0}^{m_0 - 1} \lambda (u_n - \theta
        - 2 \pi \ell) w^*_\ell
       \]
       to return \eqref{layer min-move}.
\end{enumerate}
Figure~\ref{tri-interlace} 
presents an example of interlace motion
by two minor steps with a triangle anisotropy
successively rotating $\pi/2$.
The details of the setting are as follows.
\begin{equation}
 \label{set: e-tri-interlace}
  \left\{ \ 
  \begin{aligned}
   \mbox{Center}: \ & N=1, \ a_1 = O, \ \theta (x) = 2 \arg x, \\
   \mbox{Evolution equation}: \ & V_\ell = 3 (1 - 0.02 \kappa_\ell), \\
   \mbox{Energy density}: \  
   & \gamma_\ell (p) 
   = \max_{0 \le j \le 2} (\cos \vartheta_{\ell, j}, \sin \vartheta_{\ell, j}) 
   \cdot p \\
   & \mbox{with} \
   \vartheta_{\ell, j} = \frac{2 \pi j}{3} + \frac{\pi}{6} + \frac{\pi \ell}{2}
   \ (j=0,1,2, \ \ell = 0,1), \\
   \mbox{Initial data}: \ &
   \mbox{The curve obtained from} \
   u_0 = 0.
  \end{aligned}
  \right.
\end{equation}
One can find a hexagonal pattern made by evolution 
and bunching of curves with triangle anisotropy.
\begin{figure}[htbp]
 \begin{center}
  \includegraphics[scale=1.0]{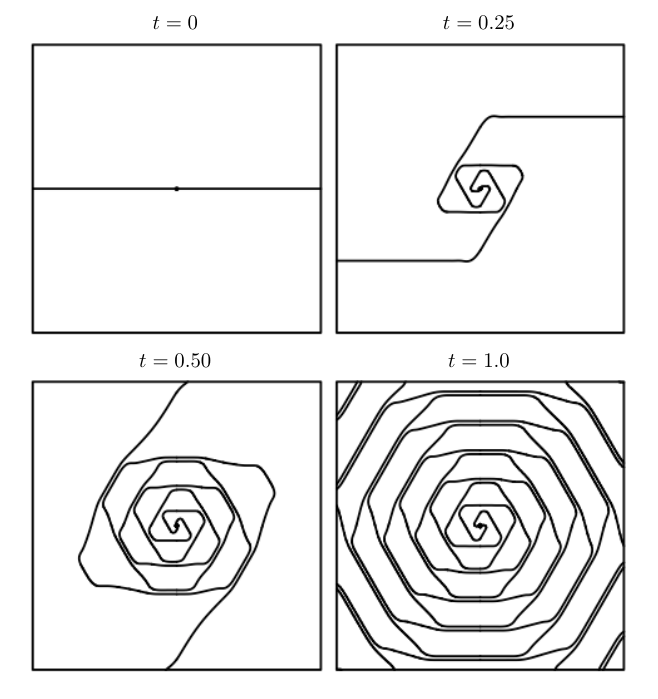}
  \caption{Profiles of interlace motion at 
  $t=0, \ 0.25, \ 0.5$, and $1.0$ by \eqref{set: e-tri-interlace}.}
  \label{tri-interlace}
 \end{center}
\end{figure}

Shtukenberg et al. \cite{Shtukenberg:2013}
reports fascinating phenomena
so-called illusory loops and spirals.
According to \cite{Shtukenberg:2013},
L-cystine crystals have the following features:
\begin{itemize}
 \item The crystal lattice has a hexagonal structure
       ($N_\gamma = 6$).
 \item Unit cell of lattice consists of 6 layers
       ($m_0 = 6$)
       successively rotated clockwise by $\pi/3$. 
 \item Each minor layer has a hexagonal pattern,
       whose unit cell has one facet having
       small energy density, and five facets having
       large energy density.
\end{itemize}
By the above structure, screw dislocation 
with the height of the unit molecule in an L-cystine crystal
provides {6} minor steps evolving by hexagonal crystalline motion. 
Moreover, because of the asymmetry of energy density
and twisted structure of it, a bunch of minor steps forms
\begin{itemize}
 \item hexagonal target pattern (sequence of closed curves) 
       even when the minor steps form spiral steps, 
 \item hexagonal spiral pattern even when 
       the minor steps form a closed curve.
\end{itemize}
We here propose a way to examine this situation
by our approach.

By the above situation,
we consider that the fundamental Wulff shape $\mathcal{W}_\gamma$ of 
minor steps satisfy the following:
\begin{itemize}
 \item The unit normal vector ${\bf N}_j$ of $j$-th facet
       of $\mathcal{W}_\gamma$ is
       \begin{equation}
	\label{facets: wulff ill-sp}
	{\bf N}_j = \left( \cos \frac{\pi j}{3}, \sin \frac{\pi j}{3} \right)
        \quad \mbox{for} \ 0 \le j \le 5.	
       \end{equation}
 \item The surface energy of $0$-th facet is $a$ times of the other facets,
       i.e., 
       \begin{equation}
	\label{energy: wulff ill-sp}
	\gamma ({\bf N}_0) = a \gamma ({\bf N}_j)
	 \quad \mbox{for} \ 1 \le j \le 5,
       \end{equation}
       where $0 < a < 1$ is a constant.
\end{itemize}
Note that if the Frank diagram
$\mathcal{F}_\gamma = \{ p \in \mathbb{R}^2 ; \ \gamma (p) \le 1 \}$ 
is represented as a convex hull
of its vertices ${\bf q}_j$, i.e., 
\[
 \mathcal{F}_\gamma 
 = \left\{ \sum_{j=0}^{N_\gamma - 1} c_j {\bf q}_j ; \ 
 0 \le c_j \le 1, \ \sum_{j=0}^{N_\gamma - 1} c_j {\le} 1 \right\},
\] 
then we observe that
\[
 \gamma^\circ (p)
 = \sup \{ p \cdot q ; \ \gamma (q) \le 1 \}
 = \max_{0 \le j \le N_\gamma - 1} p \cdot {\bf q}_j.
\]
Therefore, we now set the vertices ${\bf q}_j$ of $\mathcal{F}_\gamma$
as
\[
 {\bf q}_0 = \frac{1}{a} {\bf N}_0 = \left( \frac{1}{a}, 0 \right),
 \quad 
 {\bf q}_j = {\bf N}_j 
 \quad \mbox{for} \ 1 \le j \le 5
\]
to establish \eqref{facets: wulff ill-sp} and \eqref{energy: wulff ill-sp}.
To establish the successively rotating anisotropies of minor steps,
we set
\begin{equation}
 \gamma_\ell^\circ (p) = \max_{0 \le j \le 5} 
 p \cdot {\bf q}_{j + \ell} 
 \quad \mbox{for} \ 0 \le \ell \le 5.
 \label{supp for ill-sp}
\end{equation}
Here, we have considered $j + \ell \in \mathbb{Z} / (6 \mathbb{Z})$
on the above formula.
See Figure~\ref{energy for ill-sp} for the profiles of $\mathcal{W}_{\gamma_0}$ 
and $\mathcal{F}_{\gamma_0}$ for $\gamma_0$ and $\gamma_0^\circ$
by the above setting with $a = 0.5$.
\begin{figure}[htbp]
 \begin{center}
  \includegraphics[scale=0.5]{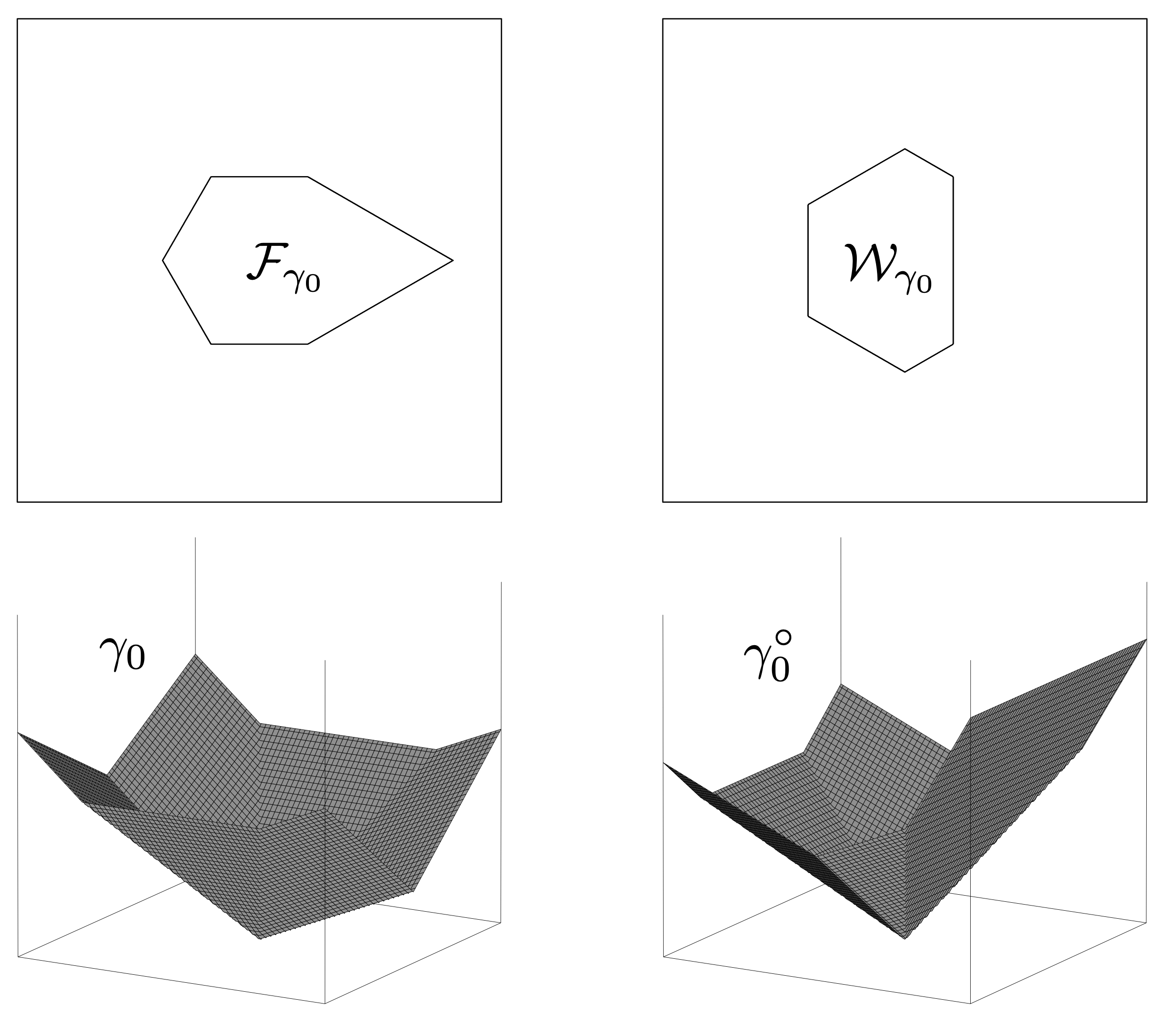}
  \caption{Graphs of $\gamma_0$ and $\mathcal{F}_{\gamma_0}$,
  or $\gamma_0^\circ$ and $\mathcal{W}_{\gamma_0}$
  defined by \eqref{supp for ill-sp} 
  with $a = 0.5$.}
  \label{energy for ill-sp}
 \end{center}
\end{figure}

Figure~\ref{ill-loop} and \ref{ill-sp} show the profiles 
of illusory loops and spirals by the above algorithm with $a=0.5$,
respectively.
The settings are as follows.
\begin{itemize}
 \item \textbf{Evolution equation:} 
       $V_{\gamma_\ell} = 3 (1 - 0.02 \kappa_{\gamma_\ell})$
       for $0 \le \ell \le 5$.
 \item \textbf{Centers:}  
	    \begin{itemize}
	     \item (Figure~\ref{ill-loop})
		   $N=1$, $a_1=(0,0)$,  
		   $m_1 = 6$.
	     \item (Figure~\ref{ill-sp})
		   $N=2$, $a_1 = (-0.1, 0)$, $a_2 = (0.1, 0)$, 
		   $m_1 = 6$, $m_2 = -6$.
	    \end{itemize}
 \item \textbf{Initial curve:}
       $\Sigma_0$ is the curve given by 
       setting $u_0 \equiv - \pi$.
\end{itemize}
In both cases, there are slower steps, catching
the following faster steps, 
and then bunching phenomena make an irregular pattern
against the situation of minor steps.
In the case of Figure~\ref{ill-loop}, 
each minor step forms a spiral curve that is not closed.
However, bunches of minor steps make a target pattern
which looks like a sequence of closed loops.
Similarly, in the case of Figure~\ref{ill-sp},
a bunch of minor steps make a spiral pattern
although each minor step forms a closed curve.

\begin{figure}[htbp]
 \begin{center}
  \includegraphics[scale=0.8]{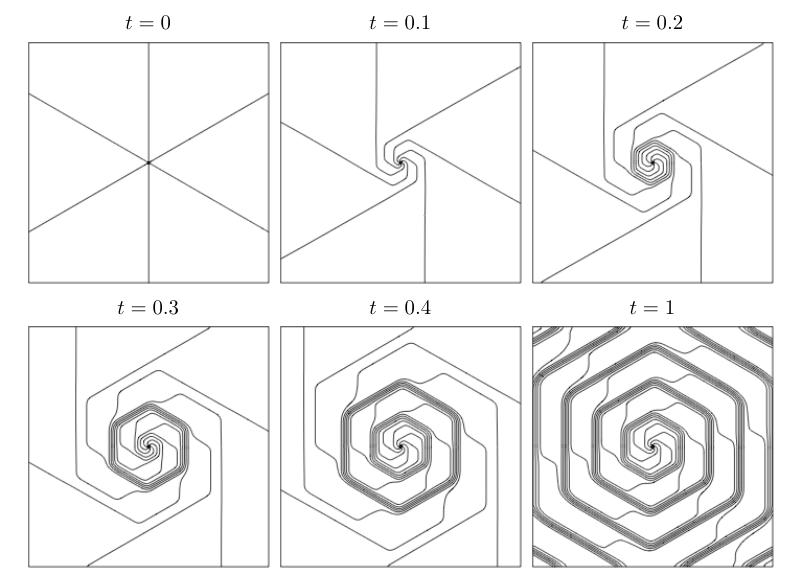}
  \caption{Numerical simulation forming
  illusory loops.}
  \label{ill-loop}
 \end{center}
\end{figure}

\begin{figure}[htbp]
 \begin{center}
  \includegraphics[scale=0.8]{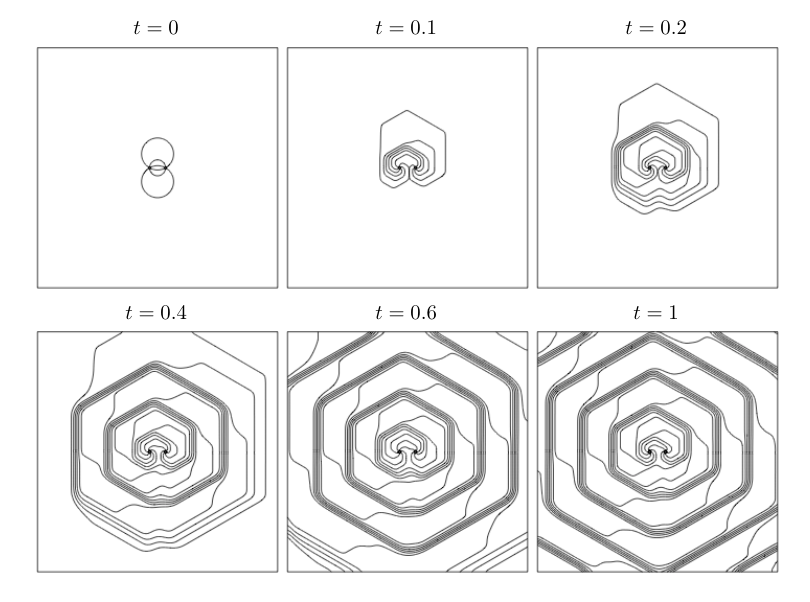}
  \caption{Numerical simulation forming
  illusory spirals.}
  \label{ill-sp}
 \end{center}
\end{figure}

\section{Summary}

In this paper, we have introduced a minimizing movements algorithm 
based on  \cite{Chambolle:2004} and \cite{OOTT:2011CMS}
for the evolution of spirals by crystalline curvature flow.
Chambolle's formulation \cite{Chambolle:2004} involves 
a signed distance function to the interface. 
However, in the case of the evolving spirals, the signed distance function 
of a spiral curve is not convenient to work with, 
even in a small neighborhood around the spiral.
Therefore, this paper introduced a modification 
that does not require the construction of 
signed distance functions after the time step.
This modification, therefore, leads to a simpler numerical algorithm.
As in the minimizing movements formulation, 
the algorithm requires solving optimization problems
involving $L^1$-type surface energy density.
We proved the existence of minimizers for our approach and showed that 
the derivatives of the minimizers 
(the functions whose zero level sets embed the spirals) 
are in $BV$.

For the evolution of a single spiral, 
our computational results are 
very close to those computed by 
the front-tracking algorithm 
developed by \cite{IO:DCDS-B}.
Since our numerical algorithm for advancing the spiral involves outer and inner loops,
we present empirical evidence that the number of inner and outer loops 
does not increase as the underlying Cartesian grid is refined. 

By using level set functions, our approach enables us 
to handle merging of multiple spirals. 
Moreover, since our approach does not rely on the distance functions to the spirals,
it allows us to handle the case 
in which bunching occurs.
As applications of it, we showcased computational results on interlace motion
\cite{Verma:PhylosMag1951, vanEnckevort:2004ActaCry},
and illusory loops and spirals \cite{Shtukenberg:2013}.

%
%

\section*{Acknowledgment}

The first author is partly supported by JSPS KAKENHI Grant Number 21K03319.
Tsai is partially supported by NSF Grant DMS-2110895.
Tsai also acknowledges support from the National Center for Theoretical Sciences (NCTS), Taiwan, during his visits.

\section*{Appendix}

\subsection*{A front tracking model for a single spiral}

In this section we present a quick review on
the model by \cite{IO:DCDS-B}.

Assume that $\mathcal{W}_\gamma$ is a convex polygon 
having $N_\gamma$ facets.
The normal and tangential vector
of $j$-th facet of $\mathcal{W}_\gamma$ is
denoted by 
\[
 {\bf N}_j = (\cos \tilde{\vartheta}_j, \sin \tilde{\vartheta}_j), \quad
 {\bf T}_j = (\sin \tilde{\vartheta}_j, - \cos \tilde{\vartheta}_j).
\]
We also denote the length of the $j$-th facet as $\ell_j > 0$.
We here assume that
$\tilde{\vartheta}_j$ satisfies \eqref{polygonal angle}--\eqref{minimal condi},
and extend $\tilde{\vartheta}_j$ to those for $j \in \mathbb{Z}$ by
\[
 \tilde{\vartheta}_{j+n N_\gamma} = \tilde{\vartheta}_j + 2 \pi n
 \quad \mbox{for} \ j = 0, 1, \ldots, N_\gamma - 1.
\]
We choose $\tilde{\vartheta}_0 \in [-\pi, \pi)$ in this section.
Accordingly,
the facets of $\mathcal{W}_\gamma$ are numbered with counter-clockwise rotation,
and 
the numbering of facets are extended 
from $j= 0, 1, \ldots, N_\gamma - 1$ to $j \in \mathbb{Z} / (N_\gamma \mathbb{Z})$
as
\[
 {\bf N}_{j+n N_\gamma} = {\bf N}_j, \quad
 {\bf T}_{j+n N_\gamma} = {\bf T}_j, \quad
 \ell_{j+n N_\gamma} = \ell_j
\] 
for every $n \in \mathbb{Z}$ and $j = 0, 1, \ldots, N_\gamma-1$.
See Figure~\ref{notation wulff diag} for details of
the notations for $\mathcal{W}_\gamma$.
\begin{figure}[htbp]
 \begin{center}
  \includegraphics[scale=0.25]{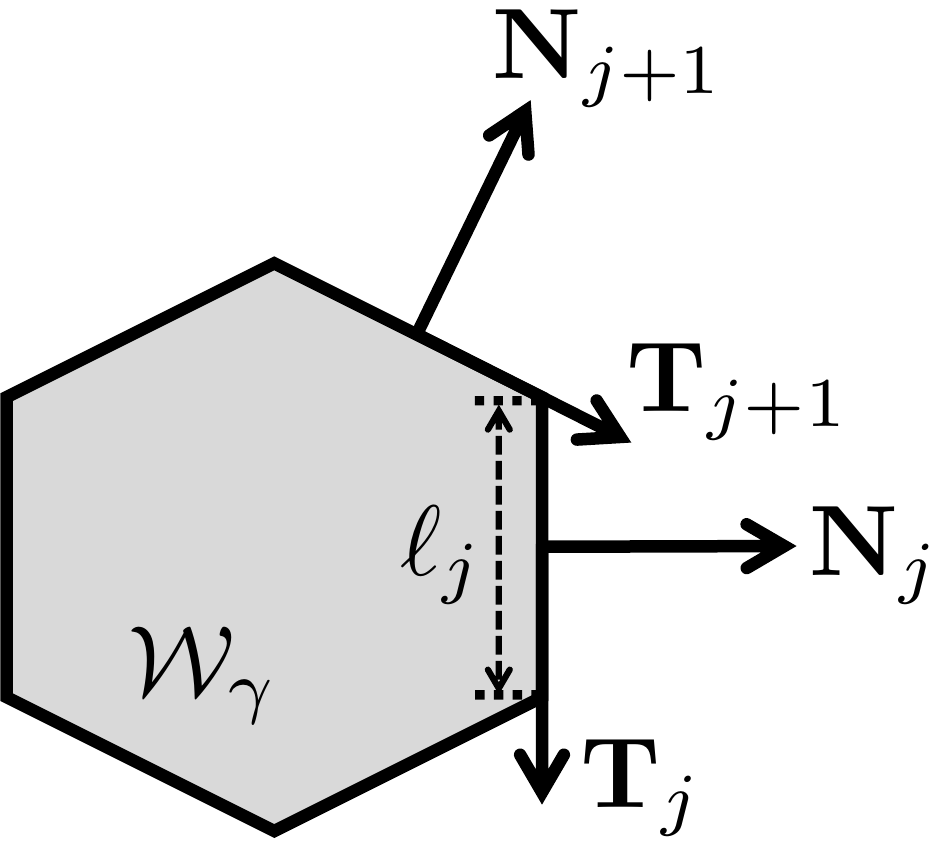}
  \caption{Notations for the Wulff diagram $\mathcal{W}_\gamma$.}
  \label{notation wulff diag}
 \end{center}
\end{figure}

Let $\Sigma_{\textup{d}} (t) = \bigcup_{j=0}^k \Sigma_{\textup{d},j} (t)$
be a polygonal spiral {evolving by the front-tracking model of \cite{IO:DCDS-B}}.
We denote the $j$-th facet of $\Sigma_{\textup{d}} (t)$
by $\Sigma_{\textup{d}, j} (t)$.
Here, we consider only the case when $\Sigma_{\textup{d}} (t)$ is convex, {so that} we assume that $\Sigma_{\textup{d}, j} (t)$
is parallel to the $j$-th facet of $\mathcal{W}_\gamma$ for 
$j \in \mathbb{Z} / (N_\gamma \mathbb{Z})$.
We also denote the center of $\Sigma_{\textup{d}} (t)$
by $y_k (t)$, and the vertices of $\Sigma_{\textup{d}} (t)$
by $\{ y_j (t) | \ j=0, 1, 2, \ldots, k \}$.
Assume that $y_k (t) = O$, i.e., 
the center is fixed at the origin.
We also assume that  
$\Sigma_{\textup{d}, 0} (t)$ has infinite length.
Then, $\Sigma_{\textup{d}, j} (t)$ is described as
\begin{align}
 \label{discrete facets}
 \Sigma_{\textup{d},j} (t) 
 & = \left\{
 \begin{array}{ll}
  \{ y_j (t) + \sigma {\bf T}_j ; \ \sigma \in [0,d_j (t)] \} & 
    \mbox{if} \ j=1, 2, \ldots, k, \\
  \{ y_0 (t) + \sigma {\bf T}_0 ; \ \sigma \in [0, \infty) \} &
   \mbox{if} \ j=0, 
 \end{array}
 \right. \\
 \label{discrete vertices}
 y_{j-1} (t) & = y_j (t) + d_j (t) {\bf T}_j 
 \quad \mbox{for} \ j=k, k-1, k-2, \ldots, 1, \\
 \label{discrete origin}
 y_k (t) & = O,
\end{align}
where $d_j (t)$ denotes the length of $\Sigma_{\textup{d}, j} (t)$.
By the above formulation and
\cite{AngenentGurtin:1989ARMA, Taylor:1991},
the crystalline curvature of $\Sigma_{\textup{d}, j} (t)$
is defined by
\[
 \kappa_\gamma = \frac{\ell_j}{d_j (t)}.
\]
Thus, \eqref{geo mcf} is translated as 
a formula of normal velocity $V_j$ of $j$-th facet
of the form
\[
 V_j = - \frac{\ell_j}{d_j} + f.
\]
Consequently, the motion of $\Sigma_{\textup{d}} (t)$ by \eqref{geo mcf}
is described by the evolution of $d_j (t)$
by the following ODE system;
\begin{align}
 \label{crystalline ODE}
 \left\{
 \begin{aligned}
  \dot{d}_k 
  & = c_k^- \left( f - \frac{\ell_{k-1}}{d_{k-1}} \right), \\
  \dot{d}_{k-1} 
  & = - b_{k-1} \left( f - \frac{\ell_{k-1}}{d_{k-1}} \right) 
  + c_{k-1}^- \left( f - \frac{\ell_{k-2}}{d_{k-2}} \right), \\
  \dot{d}_j 
  & = - b_j \left( f - \frac{\ell_j}{d_j} \right) 
  + c_j^+ \left( f - \frac{\ell_{j+1}}{d_{j+1}} \right)
  + c_j^- \left( f - \frac{\ell_{j-1}}{d_{j-1}} \right) \\
  & \hspace{7cm} \mbox{for} \ j=2, 3, \ldots, k-2, \\
  \dot{d}_1 
  & = - b_1 \left( f - \frac{\ell_1}{d_1} \right) 
  + c_1^+ \left( f - \frac{\ell_2}{d_2} \right)
  + c_1^- f,
 \end{aligned}
 \right.
\end{align}
where $b_j$ and $c_j^\pm > 0$ are constants
determined by $\tilde{\vartheta}_j$. 
Moreover, 
the generation of a new facet at the origin is proposed
in \cite{IO:DCDS-B} for the evolution of a spiral;
as the result of the motion of $\Sigma_{\textup{d}} (t)$,
if the length of $\Sigma_{\textup{d}, k} (t)$ reaches 
the critical length $\ell_k / f$,
then we add a new facet $\Sigma_{\textup{d}, k+1} (\cdot)$ 
to $\Sigma_{\textup{d}}$
and change the fixed center of $\Sigma_{\textup{d}} (t)$ to $y_{k+1} (t)$
at $t = T_k$.
In summary, an algorithm by \cite{IO:DCDS-B} 
is outlined as follows.
\begin{enumerate}
 \item Give an initial data $\Sigma_{\textup{d}} (0) 
       = \bigcup_{j=0}^k \Sigma_{\textup{d}, j} (0)$
       with suitable initial data $d_j (0)$ for $j=1, \ldots, k-1$
       and $d_{k} (0) = 0$.
       Set $T_{k-1} = 0$.
 \item \label{solve ode system}
       Solve \eqref{crystalline ODE} in $(T_{k-1}, \infty)$.
 \item Let us set $T_k = \sup \{ T > T_{k-1} | \ d_k (t) < \ell_k / f 
       \ \mbox{for} \ t \in [T_{k-1}, T) \}$.
       If $T_k < \infty$, then 
       we add a new facet $\Sigma_{\textup{d}, k+1} (T_k)$ 
       to $\Sigma_{\textup{d}} (T_k)$,
       and change the fixed center of $\Sigma_{\textup{d}} (t)$ to 
       $y_{k+1} (t)$ at $t = T_k$.
 \item Return to \eqref{solve ode system}
       with changing its center facet number $k$ to $k+1$.
\end{enumerate}
Note that the evolution law, in particular 
the behavior of the terminal facets $\Sigma_{\textup{d}, k} (t)$
and $\Sigma_{\textup{d}, 0} (t)$
of the front-tracking model or the boundary condition
are slightly different from our minimizing movements apporach.
However, in \S \ref{sec: unit spiral} we have seen that 
$\Sigma_{\textup{d}} (t)$ and {$\Sigma_{h} (t)$} are numerically close.

\subsection*{Proof of Lemma \ref{lem: aniso total variation}}

We now prove 
\begin{enumerate}
 \item $J_\gamma$ is convex and nonnegative,
 \item $J_\gamma (w) < \infty$ for $w \in BV(W)$, 
 \item $\liminf_{u \to v} J_\gamma (u) \ge J_\gamma (v)$
       in $L^1 (W)$,
 \item If $w \in W^{1,1} (W)$, then
       $J_\gamma (w) = \int_W \gamma (\nabla (w - \theta)) dx$.
\end{enumerate}
However, we omit the proof of 
\eqref{J: convex and positive}
since it is clear.
See \eqref{aniso total variation} for the definition of $J_\gamma$.

\begin{proof}

 We demonstrate 
 \eqref{J: well-defined} and \eqref{J: lower semiconti}. 
 Let $w \in BV(W)$
 and $\varphi \in C^1_c (W;\mathbb{R}^2)$ with $\gamma^\circ (\varphi) \le 1$.
 Note that
 \begin{align}
  & \label{support: positivity}
   \frac{1}{\Lambda_\gamma} \le \gamma^\circ \le \Lambda_\gamma \quad
   \mbox{on} \ \mathbb{S}^1, \\
  & 
  \gamma (p) \le 1, \ \mbox{or} \ \gamma^\circ (p) \le 1
  \quad \mbox{implies} \quad |p| \le \Lambda_\gamma
  \label{bound of what satisfies gamma < 1}
 \end{align}
 by \eqref{gamma: homogeneity} and \eqref{gamma: positivity}.
 Then, 
 \begin{align*}
  & - \int_W w \mathrm{div} \varphi dx - \int_W \nabla \theta \cdot \varphi dx \\
  & \le \| \varphi \|_\infty \left( - \int_W w \mathrm{div} \frac{\varphi}{\| \varphi \|_\infty} dx \right) 
  + |W| \|\nabla  \theta \|_\infty \| \varphi \|_\infty \\
  & \le \Lambda_\gamma ([w]_{BV} + |W| \| \nabla \theta \|_\infty )
 \end{align*}
 by \eqref{bound of what satisfies gamma < 1},
 where $|W|$ is a Lebesgue measure of $W$, 
 and $\| \varphi \|_\infty = \sup_W | \varphi |$ 
 for $\varphi \in C (\overline{W}; \mathbb{R}^2)$.
 It implies \eqref{J: well-defined}.
 
 Next, 
 we observe that
 \[
 \int_W u \mathrm{div} \varphi dx \to \int_W v \mathrm{div} \varphi dx
 \quad \mbox{as} \ u \to v \ \mbox{in} \ L^1 (W)
 \]
 since $\mathrm{div} \varphi$ is bounded on $\overline{W}$.
 Hence, we obtain
 \begin{align*}
  \liminf_{u \to v} J_\gamma (u)
  & \ge \liminf_{u \to v}
  \left[ - \int_W u \mathrm{div} \varphi dx - \int_W \nabla \theta \cdot \varphi dx \right] \\
  & = - \int_W v \mathrm{div} \varphi dx - \int_W \nabla \theta \cdot \varphi dx,
 \end{align*}
 which implies \eqref{J: lower semiconti}.

 Finally, we prove \eqref{J: aniso peri}.
 First, let $w \in W^{1,1} (W)$ and
 $\varphi \in C^1_c (W; \mathbb{R}^2)$ 
 satisfying $\gamma^\circ(\varphi) \le 1$.
 Then, we obtain
 \begin{align*}
  - \int_W w \mathrm{div} \varphi dx
  - \int_W \nabla \theta \cdot \varphi dx   
  = \int_W \nabla (w - \theta) \cdot \varphi dx
  \le \int_W \gamma(\nabla(w - \theta)) dx
 \end{align*}
 since $\gamma (p) = \sup \{ p \cdot q ; \ \gamma^\circ(q) \le 1 \}$,
 which implies $J_\gamma (w) \le \int_W \gamma (\nabla (w - \theta)) dx$.

 We next show that
 \begin{equation}
  \int_W \gamma (\nabla (w - \theta)) dx \le J_\gamma (w)  
  \label{iv: aniso-peri ineq other}
 \end{equation}
 for $w \in W^{1,1} (W)$.
 We first note that $\gamma$ is Lipschitz continuous,
 i.e., 
 \if0{
 %
 %
 %
 %
 }\fi
 \begin{equation}
  \label{gamma: Lipschitz conti}
  |\gamma (p_1) - \gamma (p_2)| \le \Lambda_\gamma |p_1 - p_2|
  \quad \mbox{for} \ p_1, p_2 \in \mathbb{R}^2.  
 \end{equation}
 This implies that 
 \begin{align*}
  \left|
  \int_W \gamma (\nabla (w - \theta)) dx 
  - \int_W \gamma (\nabla (v - \theta)) dx
  \right|
  \le \Lambda_\gamma \| w - v \|_{W^{1,1}} 
  \qquad \\
  \mbox{for} \ w, v \in W^{1,1} (W).  
 \end{align*}
 Then, 
 it suffices to prove \eqref{iv: aniso-peri ineq other} for $w \in C^1_c (\mathbb{R}^2)$,
 since $C^1_c (\mathbb{R}^2)$ is dense in $W^{1,1} (W)$.

 We now make some preparations for the proof.
 Note that $\nabla (w - \theta)$ is uniformly continuous on $\overline{W}$.
 Let us fix $\varepsilon > 0$ arbitrary,
 and fix $\delta > 0$ satisfying
 \[
  \sup_{|y-x| < \delta} |\nabla (w - \theta) (y) - \nabla (w - \theta) (x)|
  < \frac{\varepsilon}{4 \Lambda_\gamma |W|}.
 \]
 Then, since $\{ B_{\delta} (x) ; \ x \in W \}$ is an open covering
 of $\overline{W}$,
 there exists $x_1, x_2, \ldots, x_M \in W$ such that
 $\overline{W} \subset \bigcup_{j=1}^M B_{\delta} (x_h)$.
 Let $\chi_j \colon \mathbb{R}^2 \to [0,1]$ be the partition of unity,
 i.e., $\chi_j \in C^\infty (\mathbb{R}^2)$ for $j=0,1, \ldots, M$ such that
 \begin{itemize}
  \item $\mathrm{supp} \chi_j \subset B_{\delta} (x_j)$ for $j=1, \ldots, M$,
	and $\mathrm{supp} \chi_0 \subset \mathbb{R}^2 \setminus \overline{W}$,
  \item $\sum_{j=0}^M \chi_j \equiv 1$,
	in particular $\sum_{j=1}^M \chi_j (x) = 1$ for $x \in \overline{W}$.
 \end{itemize}
 Let $q_{\varepsilon, j} \in \mathbb{R}^2$ be such that
 $\gamma^\circ (q_{\varepsilon, j}) \le 1$ and
 \[
  \gamma (\nabla (w - \theta) (x_j)) - \frac{\varepsilon}{2|W|} 
  < \nabla (w - \theta) (x_j) \cdot q_{\varepsilon, j}
  \quad \mbox{for} \ j=1, \ldots, M.
 \]
 Let $d(x)$ be a distance function of $\partial W$
 for $x \in \mathbb{R}^2$, i.e., 
 \[
  d(x) = \left\{
  \begin{array}{rl}
   \inf \{ |x-y| ; \ y \in \partial W \} & \mbox{if} \ x \in W, \\
   - \inf \{ |x-y| ; \ y \in \partial W \} & \mbox{otherwise}. \\
  \end{array}
  \right.
 \]
 Then, since $\partial W$ is smooth, 
 there exists $\delta_0 > 0$ such that
 $d \in C^1 (\partial W^{\delta_0})$,
 where 
 \[
  \partial W^{\delta_0} = \{
  x \in \mathbb{R}^2 ; \ |d(x)| < \delta_0 \}.
 \]
 We now consider a cut-off function $\zeta \in C^1 (\mathbb{R})$ 
 such that
 $\zeta$ is monotone nondecreasing and satisfies
 \[
  \zeta (r) = 
  \left\{
  \begin{array}{ll}
   0 & \mbox{if} \ r \le 1/2, \\
   1 & \mbox{if} \ r \ge 1.
  \end{array}
  \right.
 \]
 Then, we observe that 
 $\tilde{d}_n (x) = \zeta (n d(x)) \in C^1 (\mathbb{R}^2)$
 for sufficiently large $n \in \mathbb{N}$.

 Set 
 \[
  \mathcal{J} (w; \varphi) = - \int_W w(y) \mathrm{div} \varphi (y) dy
  - \int_W \nabla \theta (y) \cdot \varphi (y) dy
 \]
 for $w \in C^1_c (W)$ and 
 $\varphi \in C^1_c (\overline{W}; \mathbb{R}^2)$.
 We now choose $n \in \mathbb{N}$ sufficiently large so that
 $\tilde{d}_n (x) \in C^1 (\mathbb{R}^2)$,
 $\tilde{d}_n (x_j) = 1$ 
 for every $j=1, \ldots, M$, and 
 \begin{align*}
  | \partial W^{\frac{1}{n}} \cap W | < \frac{\varepsilon}{\Lambda_\gamma (1 + L)}, 
  \quad \mbox{where} \ L = \sup_{\overline{W}} |\nabla (w - \theta)|.
 \end{align*}
 Note that
 \[
  W \cap \partial W^{\frac{1}{n}}
  = \{ x \in \overline{W} ; \ 0 < \tilde{d}_n (x) < 1 \},
  \quad
  W \setminus \partial W^{\frac{1}{n}} 
  = \{ x \in \overline{W} ; \ \tilde{d}_n (x) = 1 \}.
 \]
 Now, let us set
 $\varphi_\varepsilon (x) = \sum_{j=1}^M \tilde{d}_n (x) \chi_j (x) q_{\varepsilon, j}$.
 Then, $\varphi_\varepsilon \in 
 C^1_c (\overline{W}; \mathbb{R}^2)$
 and
 \[
  \gamma^\circ (\varphi_\varepsilon (x))
  \le \sum_{j=1}^M \chi_j (x) = 1
 \]
 for $x \in \overline{W}$
 by \eqref{gamma: convexity} and \eqref{gamma: homogeneity}.
 We now calculate $\mathcal{J} (w;\varphi_\varepsilon)$
 with integration by parts and obtain
 \begin{align}
  & \int_W \gamma (\nabla (w - \theta) (y)) dy
  - \mathcal{J} (w; \varphi_\varepsilon) 
  \nonumber \\
  & = \int_W \Bigl( \gamma (\nabla (w - \theta) (y))
  - \nabla(w - \theta) (y) \cdot \varphi_\varepsilon (y) \Bigr) dy
  = I + II,
  \label{aniso-peri estimate integral}
 \end{align}
 where
 \begin{align*}
  I & = \int_{W_1} \Bigl( \gamma (\nabla (w - \theta) (y))
  - \nabla(w - \theta) (y) \cdot \varphi_\varepsilon (y) \Bigr) dy, 
  \quad W_1 = W \setminus \partial W^{\frac{1}{n}}, \\
  II & = \int_{W_2} \Bigl( \gamma (\nabla (w - \theta) (y))
  - \nabla(w - \theta) (y) \cdot \varphi_\varepsilon (y) \Bigr) dy,
  \quad W_2 = W \cap \partial W^{\frac{1}{n}}.
 \end{align*}

 In the first part, 
 since $\tilde{d}_n = 1$ on $W_1$ we have
 \begin{align*}
  I = \sum_{j=1}^M 
  \int_{W_1 \cap B_\delta (x_j)}  
  \chi_j (y) \Bigl( \gamma (\nabla (w - \theta) (y))
  - \nabla(w - \theta) (y) \cdot q_{\varepsilon, j} \Bigr) dy.
 \end{align*}
 For $y \in W_1 \cap B_{\delta} (x_j)$
 we observe that
 \begin{align*}
  & \gamma (\nabla (w - \theta) (y)) 
  - \nabla(w - \theta) (y) \cdot q_{\varepsilon, j} \\
  & \le 
  \gamma (\nabla (w - \theta) (x_j)) 
  - \nabla(w - \theta) (x_j) \cdot q_{\varepsilon, j} 
  \\
  & \qquad 
  + (\Lambda_\gamma + |q_{\varepsilon, j}|) 
  |\nabla (w - \theta)(y) - \nabla (w - \theta)(x_j)| \\
  & < 
  \frac{\varepsilon}{|W|}
 \end{align*}
 by \eqref{gamma: Lipschitz conti} and \eqref{bound of what satisfies gamma < 1}.
 Then, we obtain
 \begin{align}
  I
  < \frac{\varepsilon}{|W|} \sum_{j=1}^M \int_{W_1 \cap B_\delta (x_j)} \chi_j (y) dy 
  = \varepsilon.
  \label{aniso-peri estimate I}
 \end{align}
 On the second part of \eqref{aniso-peri estimate integral}, 
 we give an estimate of
 \begin{align*}
  II = \sum_{j=1}^M 
  \int_{W_2 \cap B_\delta (x_j)}  
  \chi_j (y) \Bigl( \gamma (\nabla (w - \theta) (y))
  - \tilde{d}_n (y) \nabla(w - \theta) (y) \cdot q_{\varepsilon, j} \Bigr) dy.
 \end{align*}
 Note that $\gamma (p) \le \Lambda_\gamma |p|$ for every $p \in \mathbb{R}^2$.
 Thus we have
 \begin{align*}
  & \left| \Bigl( \gamma (\nabla (w - \theta) (y))
  - \tilde{d}_n (y) \nabla(w - \theta) (y) \cdot q_{\varepsilon, j} \Bigr) \right| \\
  & \le 
  \gamma (\nabla (w - \theta) (y))
  + |\nabla(w - \theta) (y)| |q_{\varepsilon, j}| \\
  & \le 2 \Lambda_\gamma L
 \end{align*}
 for $y \in W_2 \cap B_\delta (x_j)$.
 Hence, we obtain
 \begin{align}
  |II| \le 2 \Lambda_\gamma L \sum_{j=1}^M 
  \int_{W_2 \cap B_\delta (x_j)} \chi_j (y) dy
  = 2 \Lambda_\gamma L |W_2| 
  \le 2 \varepsilon.
  \label{aniso-peri estimate II}
 \end{align}
 By combining \eqref{aniso-peri estimate integral}, 
 \eqref{aniso-peri estimate I} and 
 \eqref{aniso-peri estimate II},
 we obtain
 \begin{align*}
  \int_W \gamma (\nabla (w - \theta) (y)) dy
  - \mathcal{J} (w; \varphi_\varepsilon)
  \le 3 \varepsilon,
 \end{align*}
 which implies
 \[
  \int_W \gamma (\nabla (w - \theta)) dx
  \le J_\gamma (w) + 3 \varepsilon.
 \]
 By tending $\varepsilon \to 0$ we obtain \eqref{iv: aniso-peri ineq other}
 for $w \in C^1_c (\mathbb{R}^2)$, and thus \eqref{J: aniso peri}.
\end{proof}

\bibliography{min_move}
\bibliographystyle{plain}

\end{document}